\pdfoutput=1
\RequirePackage{ifpdf}
\ifpdf 
\documentclass[pdftex]{sigma}
\else
\documentclass{sigma}
\fi

\usepackage{tikz}
\usepackage{pict2e}

\usepackage{curves}
\usepackage{enumerate}

\usepackage{accents}
\usetikzlibrary{calc,decorations.markings}
\unitlength1cm

\newcommand{\R}{\mathbb{R}}
\newcommand{\Z}{\mathbb{Z}}
\newcommand{\N}{\mathbb{N}}
\newcommand{\C}{\mathbb{C}}
\newcommand{\Id}{I}

\newcommand{\cK}{\mathcal{K}}
\newcommand{\cE}{\mathcal{E}}
\newcommand{\cR}{\mathcal{R}}
\newcommand{\cW}{\mathcal{W}}



\newcommand{\hw}{\widehat w}
\newcommand{\tw}{\widetilde w}

\newcommand{\bQ}{ Q}
\newcommand{\hQ}{{\widehat Q}}
\newcommand{\tQ}{{\widetilde Q}}

\newcommand{\sbQ}{Q^\star}
\newcommand{\shQ}{{\widehat Q^\star}}
\newcommand{\stQ}{{\widetilde Q^\star}}

\newcommand{\smu}{\mu^\star}
\newcommand{\shmu}{\widehat \mu^\star}

\newcommand{\bmu}{{\mu}}
\newcommand{\hmu}{{\widehat \mu}}
\newcommand{\tmu}{{\widetilde \mu}}

\newcommand{\bP}{P}
\newcommand{\hP}{{\widehat P}}
\newcommand{\tP}{{\widetilde P}}

\newcommand{\bv}{v}
\newcommand{\hv}{{\widehat v}}
\newcommand{\tv}{{\widetilde v}}

\newcommand{\sbv}{v^\star}
\newcommand{\shv}{{\widehat v^\star}}
\newcommand{\stv}{{\widetilde v^\star}}

\newcommand{\stu}{{\widetilde u^\star}}

\newcommand{\hF}{\widehat F}

\newcommand{\Fw}{\frac{F^2}{w}}
\newcommand{\hFw}{\frac{\hF^2}{\hw}}

\newcommand{\Cau}{\mathcal C_{\Sigma}}

\newcommand{\Cautv}{\mathcal C_{\tv}}

\newcommand{\sCau}{\mathcal C_{\Sigma^\star}}
\newcommand{\sCauv}{\mathcal C_{\bv^\star}}
\newcommand{\sCauhv}{\mathcal C_{\hv^\star}}
\newcommand{\sCautv}{\mathcal C_{\stv}}

\newcommand{\Ltwo}{L^2(\Sigma)}

\newcommand{\Sigl}{\Sigma^\ell}
\newcommand{\Sigr}{\Sigma^r}


\newcommand{\I}{\mathrm{i}}
\newcommand{\E}{\mathrm{e}}

\newcommand{\im}{\mathop{\mathrm{Im}}}

\newcommand{\dist}{\mathop{\mathrm{dist}}}

\newtheorem{Theorem}{Theorem}[section]
\newtheorem{Corollary}[Theorem]{Corollary}
\newtheorem{Lemma}[Theorem]{Lemma}
\newtheorem{Proposition}[Theorem]{Proposition}
 { \theoremstyle{definition}

\newtheorem{Remark}[Theorem]{Remark} }

\numberwithin{equation}{section}

\begin{document}

\allowdisplaybreaks

\renewcommand{\thefootnote}{}

\newcommand{\arXivNumber}{2307.09277}

\renewcommand{\PaperNumber}{004}

\FirstPageHeading

\ShortArticleName{Recurrence Coefficients for Orthogonal Polynomials with a Logarithmic Weight Function}

\ArticleName{Recurrence Coefficients for Orthogonal Polynomials\\ with a Logarithmic Weight Function\footnote{This paper is a~contribution to the Special Issue on Evolution Equations, Exactly Solvable Models and Random Matrices in honor of Alexander Its' 70th birthday. The~full collection is available at \href{https://www.emis.de/journals/SIGMA/Its.html}{https://www.emis.de/journals/SIGMA/Its.html}}}

\Author{Percy DEIFT~$^{\rm a}$ and Mateusz PIORKOWSKI~$^{\rm b}$}

\AuthorNameForHeading{P.~Deift and M.~Piorkowski}

\Address{$^{\rm a)}$~Department of Mathematics, Courant Institute of Mathematical Sciences,\\
\hphantom{$^{\rm a)}$}~New York University, 251 Mercer Str., New York, NY 10012, USA}
\EmailD{\href{mailto:deift@cims.nyu.edu }{deift@cims.nyu.edu}}

\Address{$^{\rm b)}$~Department of Mathematics, Katholieke Universiteit Leuven,\\
\hphantom{$^{\rm b)}$}~Celestijnenlaan 200B, 3001 Leuven, Belgium}
\EmailD{\href{mailto:mateusz.piorkowski@kuleuven.be}{mateusz.piorkowski@kuleuven.be}}

\ArticleDates{Received July 19, 2023, in final form January 01, 2024; Published online January 10, 2024}

\Abstract{We prove an asymptotic formula for the recurrence coefficients of orthogonal polynomials with orthogonality measure $\log \bigl(\frac{2}{1-x}\bigr) {\rm d}x$ on $(-1,1)$. The asymptotic formula confirms a special case of a conjecture by Magnus and extends earlier results by Conway and one of the authors. The proof relies on the Riemann--Hilbert method. The main difficulty in applying the method to the problem at hand is the lack of an appropriate local parametrix near the logarithmic singularity at $x = +1$.}

\Keywords{orthogonal polynomials; Riemann--Hilbert problems; recurrence coefficients; steepest descent method}

\Classification{42C05; 34M50; 45E05; 45M05}

\renewcommand{\thefootnote}{\arabic{footnote}}
\setcounter{footnote}{0}

\section{Introduction}

\subsection{Background}
\begin{figure}[t]
 \centering
 \includegraphics[scale=0.4]{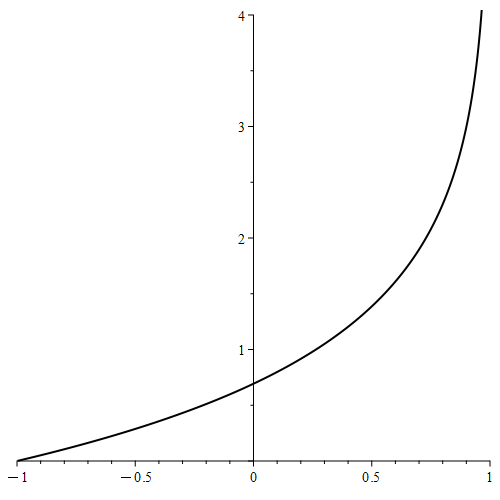}
 \caption{Plot of the weight function $w(x)$.} \label{log_weight}
\end{figure}

In this paper, we study orthogonal polynomials with orthogonality measure $w(x){\rm d}x$ given by
\begin{align}\label{wlog}
 w(x){\rm d}x = \log\left( \frac{2}{1-x}\right){\rm d}x, \qquad x \in [-1,1).
\end{align}
Note that $w(x)$ has a logarithmic singularity for $x \to +1$ and a simple zero at $x = -1$, see Figure~\ref{log_weight}.
Denote by $\lbrace p_n \rbrace_{n=0}^\infty$ the corresponding orthonormal polynomials,
\begin{align*}
 \int_{-1}^1 p_m(x) p_n(x) w(x) {\rm d}x = \delta_{mn}, \qquad m,n \in \N.
\end{align*}
The polynomials $\lbrace p_n \rbrace_{n=0}^\infty$ satisfy the three terms recurrence relation given by
\begin{align*}
 x p_n(x) = b_n p_{n+1}(x) + a_n p_n(x)+ b_{n-1} p_{n-1}(x), \qquad n \geq 1,
\end{align*}
where $a_n \in \R$ and $b_n > 0$.
 Note that our notation for $a_n$, $b_n$ is the same as in \cite{CD,PD} but opposite to the one in \cite{KMVV, Mag18}. Listed below are the first few recurrence coefficients for the weight function~$w$.
The large $n$ asymptotics of the recurrence coefficients $a_n$, $b_n$ are the main focus of the present work. To be precise we will prove the following result.
\begin{Theorem}\label{MainTheorem}
The recurrence coefficients $\lbrace a_n\rbrace_{n=0}^\infty$, $\lbrace b_n \rbrace_{n=0}^\infty$ of the orthogonal polynomials with orthogonality measure $w(x){\rm d}x$, with $w$ given in \eqref{wlog}, satisfy
\begin{align}\label{FinalEq1}
 a_n = \frac{1}{4n^2} -\frac{3}{16n^2\log^2 n} + O\left( \frac{1}{n^2\log^3 n}\right) \qquad \text{as}\quad n \to \infty
\end{align}
and
\begin{align}\label{FinalEq2}
 b_n = \frac{1}{2}-\frac{1}{16n^2}-\frac{3}{32n^2\log^2 n} + O\left(\frac{1}{n^2\log^3 n}\right) \qquad \text{as} \quad n \to \infty.
\end{align}
\end{Theorem}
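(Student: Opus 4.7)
The proof will follow the Deift--Zhou nonlinear steepest descent method. Starting from the standard $2 \times 2$ matrix Riemann--Hilbert (RH) problem $Y$ for the orthonormal polynomial $p_n$ and its weighted Cauchy transform against $w$, I perform the usual chain of transformations $Y \to T \to S$: normalisation at infinity through the $g$-function of the arcsine equilibrium measure on $[-1,1]$, followed by opening of lenses around $(-1,1)$. After these steps the lens jumps are exponentially small, the band jump is purely oscillatory, and the asymptotic analysis reduces to approximating $S$ by a parametrix. The global parametrix $P^{(\infty)}$ is constructed from the Szeg\H{o} function of $w$; this is well defined because $\log w$ is integrable against the arcsine weight. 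At $x = -1$, where $w$ has a simple zero, the classical modified Bessel parametrix applies and contributes the usual $O(n^{-2})$ type corrections (cf.\ \cite{KMVV}).

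The essential difficulty, as flagged in the abstract, appears at $x = +1$: the logarithmic singularity of $w$ does not match any classical model problem (Airy, Bessel, Painlev\'e) and no closed-form parametrix with a logarithmic endpoint is known. My strategy is therefore to dispense with an exact local parametrix at $+1$, continuing to use $P^{(\infty)}$ on a small disk $D_+$ around $+1$ and absorbing the mismatch into the error analysis. The error matrix $R$, defined as $S (P^{(\infty)})^{-1}$ outside the disks and as the corresponding ratio with the Bessel parametrix inside $D_-$, then solves a small-norm RH problem whose jump $v_R$ satisfies $v_R - \Id = O(n^{-1})$ on the lens contours and on $\partial D_-$, but only $v_R - \Id = O(1/\log n)$ on $\partial D_+$. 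Inverting the associated Cauchy operator by Neumann iteration yields
\begin{align*}
 R(z) = \Id + R_1(z) + R_2(z) + \cdots,
\end{align*}
with $R_k$ a $k$-fold iterated Cauchy integral of $v_R - \Id$.

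The recurrence coefficients $a_n, b_n$ are then recovered from the subleading coefficients of $Y(z)$ as $z \to \infty$, which are expressible in terms of $R_k(\infty)$ and the large-$z$ expansion of $P^{(\infty)}$. The leading $1/(4n^2)$ in $a_n$ and the $-1/(16n^2)$ in $b_n$ are produced jointly by $P^{(\infty)}$ and the Bessel contribution at $-1$, already accounting for the earlier result of Conway and the first author, while the slow logarithmic corrections $-3/(16n^2 \log^2 n)$ and $-3/(32n^2 \log^2 n)$ come from the $\partial D_+$ integrals. The main obstacle is therefore the precise asymptotic expansion of $v_R - \Id$ on $\partial D_+$ to two non-trivial orders in the small parameter $1/\log n$. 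In particular one must verify that its leading $O(1/\log n)$ part does not shift the naive leading asymptotics of $a_n, b_n$ at order $1/n$ or $1/(n \log n)$, identify the $O(1/\log^2 n)$ piece that produces the announced correction, and show that all remaining multi-loop cross terms between $\partial D_+$ and the other contours contribute only to the $O(1/(n^2 \log^3 n))$ error. This delicate bookkeeping, together with the justification of using $P^{(\infty)}$ as a surrogate parametrix at $+1$, is the bulk of the work.
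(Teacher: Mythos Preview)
Your plan departs from the paper's in a crucial way, and the departure hides a genuine gap. The paper does \emph{not} surround $+1$ by a disk and attempt a small-norm $R$-problem there. Instead it runs a comparison argument: it introduces the auxiliary model weight $\hw(x)=(1+x)\E^{d_0 x}$, whose recurrence coefficients $\hat a_n,\hat b_n$ are known to $O(n^{-3})$ from \cite{KMVV}, and computes the \emph{difference} $a_n-\hat a_n$, $b_n^2-\hat b_n^2$ via the identity
\[
Q_1^{(n)}-\hQ_1^{(n)}=-\frac{1}{2\pi\I}\int_\Sigma Q_-(v-\hv)\hQ_-^{-1}\,{\rm d}s.
\]
No circle is drawn around $+1$; the residual jump lives on the segment $(1,1+\delta)$ and is $\phi^{-2n}$ times $\frac{F^2}{w_+}+\frac{F^2}{w_-}-2\frac{\hF^2}{\hw}$, which by Corollary~\ref{CorF+1} and \eqref{hat F^2/w at +1} is already $O(1/\log^2 n)$. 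The $\phi^{-2n}$ factor localises the integral to an interval of length $O(1/n)$, producing directly the $\frac{3}{16 n\log^2 n}$ scale in $Q_1^{(n)}-\hQ_1^{(n)}$ and hence the $\frac{1}{n^2\log^2 n}$ corrections after differencing in $n$. The hard analytic input is not a Neumann expansion but the \emph{uniform invertibility} of the associated singular integral operators (Theorems~\ref{UniformBoundedness} and \ref{TheoremUniInv}); this is obtained from an explicit formula for the Legendre resolvent (Proposition~\ref{ExpForm}) together with a Bessel parametrix at $-1$ and Muckenhoupt-weighted Cauchy bounds, precisely because no parametrix at $+1$ is available.

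Your proposed route has a structural problem. First, if $R=S(P^{(\infty)})^{-1}$ both inside and outside $D_+$, there is no jump on $\partial D_+$ at all; the residual jumps near $+1$ sit on the lens and on $(1,1+\delta)$ and are not ``$O(1/\log n)$ on $\partial D_+$'' in any natural sense. Second, and more seriously, even granting a jump of size $O(1/\log n)$ on some contour near $+1$, a Neumann series in that parameter produces contributions to $R(\infty)$ at every scale $(1/\log n)^k$, all of which dominate $1/n^2$. You acknowledge that one must ``verify'' these do not shift $a_n,b_n$ at orders $1/n$ or $1/(n\log n)$, but you give no mechanism for the infinitely many cancellations this would require, nor for why the tail beyond the second iterate is $O(1/(n^2\log^3 n))$ rather than merely $O(1/\log^3 n)$. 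The paper's comparison device sidesteps exactly this: by subtracting the model problem before estimating, the leading $O(1/\log n)$ pieces never appear, and the first nontrivial contribution is already at the correct order.
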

Comparing \eqref{FinalEq1} and \eqref{FinalEq2} with Table~\ref{tableCoefficients}, we see that, already for $n = 4$, $a_n$ and $b_n^2$ are close to their limiting values, $0$ and $\frac{1}{4}$ respectively, up to the second digit.

\begin{table}[t]\centering
\renewcommand{\arraystretch}{1.4}
\scalebox{0.95}{
\begin{tabular}{ c|c|c|c|c|c }
 \hline
 $n$ & $0$ & $1$ & $2$ & $3$ & $4$\\
 \hline
 $a_n$ & $\frac{1}{2}$ & $\frac{1}{14}$ & $\frac{263}{9058}$ \scriptsize $\approx 0.029$ & $\frac{1995511}{126347454}$ \scriptsize $\approx 0.016$& $\frac{436364251361}{43886567673522}$ \scriptsize $\approx 0.010$\\
 \hline
 $b_n^2$ & $\frac{7}{36}$ & $\frac{2588}{11025}$ & $\frac{71180289}{293026300}$ \scriptsize $\approx 0.243$& $\frac{1329399823424}{5405644687527}$ \scriptsize $\approx 0.246$& $\frac{39672481023099631594375}{160381475127054568640484}$ \scriptsize $\approx 0.247$
\end{tabular}}
\caption{First recurrence coefficients for the weight function $w$.}\label{tableCoefficients}
\end{table}

Theorem \ref{MainTheorem} is a special case of a conjecture by Magnus, who analyzed in~\cite{Mag18} a few examples of continuous weight functions with logarithmic singularities at the edge and in the bulk of the support, among others $-\log(t)$ with $t \in (0, 1]$ which is equivalent to~\eqref{wlog} after the affine change of variables $x = 1-2t$ (see the remark below). More generally, for the case of a logarithmic singularity at the edge of the support, Magnus considered $w_M(x)$, now supported, without loss of generality, on $x \in (-1,1)$, satisfying the following two conditions:
\begin{itemize}\itemsep=0pt
 \item $w_M(x)/(1+x)^{\beta}$ has a positive finite limit for $x \to -1$,
 \item $w_M(x)/[-(1-x)^{\alpha}\log(1-x)]$ has a positive finite limit for $x \to +1$,
\end{itemize}
where $\alpha, \beta > -1$. He conjectured based on numerical evidence that the recurrence coefficients~$a_{M,n}$ and $b_{M,n}$ of the corresponding orthogonal polynomials satisfy for $n \to \infty$,
\begin{align}\label{MagA}
 a_{M,n} = \frac{\beta^2-\alpha^2}{4n^2} + \frac{2B}{n^2\log n} + \frac{2C}{n^2 \log^2 n} + o\big((n \log n)^{-2}\big)
\end{align}
and
\begin{align}\label{MagB}
 b_{M,n} = \frac{1}{2} - \frac{\alpha^2 +\beta^2-\frac{1}{2}}{8n^2} + \frac{B}{n^2 \log n} + \frac{C}{n^2 \log^2 n} + o\big((n \log n)^{-2}\big).
\end{align}
Additionally, it was conjectured that for the special case $w_M = w$ from \eqref{wlog}, $B = 0$ and $C = -\frac{3}{32}$ holds, which is confirmed by Theorem~\ref{MainTheorem}. Note that for $w$, we have $\alpha = 0$ and $\beta = 1$.
\begin{Remark}
 For a general orthogonality measure ${\rm d}\mu(x)$ supported on $x\in (-1,1)$ with recurrence coefficients $A_n$, $B_n$, the orthogonality measure defined by ${\rm d}\widetilde \mu(t) := {\rm d}\mu(1-2t)$ with~${t \in (0,1)}$ leads to recurrence coefficients given by
 \begin{align*}
 \widetilde A_n = \frac{1}{2}-\frac{A_n}{2}, \qquad \widetilde B_n = \frac{B_n}{2}.
 \end{align*}
\end{Remark}

\subsection{State of the art}
Earlier work on Magnus' conjecture was done by Conway and one of the authors in \cite{CD} using Riemann--Hilbert (RH) techniques. The weight function considered therein had the form
\begin{align}\label{wk}
 w_k(x)= \log \left(\frac{2k}{1-x}\right), \qquad x \in [-1,1), \quad k > 1.
\end{align}
The authors prove the conjecture for this special case corresponding to $\alpha = \beta = 0$, and also obtain $B = 0$ and $C = -\frac{3}{32}$, further suggesting that these constants do not depend on the behaviour of the weight function away from the logarithmic singularity.

\begin{Theorem}[{\cite{CD}}]\label{CDTheorem}
 The recurrence coefficients $\big\lbrace a_n^{(k)}\big\rbrace_{n=0}^\infty$, $\big\lbrace b_n^{(k)} \big\rbrace_{n=0}^\infty$ of the orthogonal polynomials with orthogonality measure $w_k(x){\rm d}x$, with $w_k$ given in \eqref{wk}, satisfy
\begin{align*}
 a_n^{(k)} = -\frac{3}{16n^2\log^2 n} + O\left( \frac{1}{n^2\log^3 n}\right) \qquad \text{as} \quad n \to \infty
\end{align*}
and
\begin{align*}
 b_n^{(k)} = \frac{1}{2}+\frac{1}{16n^2}-\frac{3}{32n^2\log^2 n} + O\left(\frac{1}{n^2\log^3 n}\right) \qquad \text{as} \quad n \to \infty.
\end{align*}
\end{Theorem}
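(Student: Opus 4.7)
The plan is to apply the Deift--Zhou nonlinear steepest descent method to the Fokas--Its--Kitaev Riemann--Hilbert problem $Y$ whose $(1,1)$ entry is the monic orthogonal polynomial for the weight $w_k$. Since $w_k$ is bounded below by a positive constant on any compact subinterval of $[-1,1)$ and blows up only like $-\log(1-x)$ at $+1$, Magnus' parameters here are $\alpha=\beta=0$. This immediately explains why the theorem has no $n^{-2}$ term in $a_n^{(k)}$ (the coefficient $(\beta^2-\alpha^2)/4$ vanishes) and a $+\tfrac{1}{16n^2}$ term in $b_n^{(k)}$ (coming from $-(\alpha^2+\beta^2-\tfrac12)/8 = \tfrac{1}{16}$), matching the general predictions \eqref{MagA}--\eqref{MagB}.

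I would carry out the familiar chain $Y\mapsto T\mapsto S\mapsto R$: first, normalize at infinity via $T(z)=\mathrm{e}^{n\ell\sigma_3/2}Y(z)\mathrm{e}^{-n(g(z)+\ell/2)\sigma_3}$, where $g$ is the log-transform of the arcsine equilibrium measure on $[-1,1]$ and $\ell$ is the associated Robin constant; second, open a lens $T\mapsto S$ using the analytic extension of $w_k(z)=\log\bigl(2k/(1-z)\bigr)$ to $\mathbb C\setminus[1,\infty)$, which makes the jumps on the lens edges exponentially small away from $\pm 1$. For the model solution $P$, I would take the standard global parametrix $P^{(\infty)}$ built out of the Szeg\H{o} function $D(z)$ of $w_k$ on $\mathbb C\setminus[-1,1]$, glued to an Airy parametrix in a fixed disk around $-1$ (where $w_k$ is real-analytic and nonvanishing). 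Setting $R:=S P^{-1}$ reduces the problem to a small-norm RH problem once one controls the neighborhood of $+1$.

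The hard part, and the one flagged in the abstract, is precisely that neighborhood: the jumps of $S$ near $+1$ are not exponentially small, but no classical local model (Airy, Bessel, confluent hypergeometric) has jumps matching a logarithmic weight. The strategy is to dispense with a genuine local parametrix and continue using $P^{(\infty)}$ near $+1$, paying the price that the jump of $R$ on a contour encircling $+1$ is only logarithmically small rather than exponentially small. Quantitatively, one uses the fact that near $z=1$ the Szeg\H{o} function absorbs the leading logarithmic behavior, leaving a residual jump factor that is $O(1/\log n)$ once the contour is placed at distance $\sim 1/n$ from $+1$. A careful $L^2$-analysis of the Cauchy operator on this contour, exploiting that its length is $O(1/n)$, then gives the needed control on $R-I$; the delicate point is to keep track of the two leading orders in the small-norm expansion, since both contribute to the coefficient of $(n\log n)^{-2}$.

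Finally, one reads off $a_n^{(k)}$ and $b_n^{(k)}$ from the first two terms of the Laurent expansion of $R(z)P^{(\infty)}(z)$ at $z=\infty$ via the standard identities relating monic-polynomial leading coefficients and orthogonality norms to the matrix entries at infinity. The $+\tfrac{1}{16n^2}$ in $b_n^{(k)}$ comes from the parametrix contribution for $\alpha=\beta=0$, while the $-\tfrac{3}{16n^2\log^2 n}$ and $-\tfrac{3}{32n^2\log^2 n}$ corrections emerge from the explicit two-term expansion of the residual integral around $+1$, yielding Magnus' constants $B=0$ and $C=-\tfrac{3}{32}$. The main obstacle throughout is step three: making the ``no local parametrix'' strategy quantitative enough to resolve subleading $(\log n)^{-2}$ corrections rather than merely $(\log n)^{-1}$.
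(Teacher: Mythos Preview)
This theorem is not proved in the present paper at all: it is quoted from \cite{CD} as background, and the paper only describes the method of \cite{CD} in words. So there is no proof here to compare against line by line, but the paper does say enough about the strategy in \cite{CD} to see that your proposal diverges from it in two places.

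First, a technical slip: at $z=-1$ the equilibrium measure on $[-1,1]$ is the arcsine measure, so the endpoints are hard edges, and for $k>1$ the weight $w_k$ is analytic and strictly positive there. The correct local parametrix is therefore the Bessel parametrix with index $\nu=0$ (as in \cite{KMVV} and as used throughout Section~\ref{LocPar} of this paper), not an Airy parametrix. Airy appears at soft edges for varying exponential weights, which is not the present situation.

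Second, and more substantively, your handling of the logarithmic singularity at $+1$ is not the mechanism \cite{CD} actually uses. You propose to keep $P^{(\infty)}$ inside a shrinking disk of radius $\sim 1/n$ around $+1$ and estimate the residual jump on its boundary as $O(1/\log n)$. The paper explicitly says (Section~1.3 and the end of Section~1.4) that \cite{CD} instead runs the lens contour straight through $+1$ out to $1+\delta$, reformulates the deformed problem as a singular integral equation on that fixed contour, and proves that the associated operator $1-\Cautv$ for the \emph{Legendre} weight has a uniformly bounded inverse (Theorem~\ref{UnifLeg} here, Theorem~4.7 in \cite{CD}). The asymptotics of $a_n^{(k)}$, $b_n^{(k)}$ then come from a comparison argument between the logarithmic and Legendre problems, with the $-3/(16 n^2\log^2 n)$ term emerging from an explicit integral over $(1,1+1/n)$ as in Proposition~\ref{IntegralEst2}. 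Your shrinking-disk picture may be heuristically related, but it is a genuinely different route, and you have not indicated how it would produce the exact constants $B=0$, $C=-3/32$ rather than just an $O\bigl((n\log n)^{-1}\bigr)$ bound; in \cite{CD} those constants come out of the Legendre comparison and the precise form of \eqref{Diff F^2/w at +1}, not from a small-norm expansion of $R$ on a circle.
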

Note that for $k > 1$, the weight function $w_k(x)$ has a positive finite value at $x = -1$, while $\lim_{k\to 1+} w_k(x) = w(x)$ for $x \in [-1,1)$. Hence, we see different $1/n^2$-terms in Theorem \ref{MainTheorem} compared to Theorem \ref{CDTheorem}, consistent with the conjectural \eqref{MagA} and \eqref{MagB}. Also observe that terms of order $1/\big(n^2 \log^2 n\big)$ are identical in both theorems, in particular they do not depend on~$k$. These terms can be interpreted as contributions from the logarithmic singularity at $x = +1$, which does not depend on $k \geq 1$.

For $k < 1$, the weight function $w_k$ would have a simple zero in the interior of $(-1,1)$ and the uniqueness of the corresponding polynomials is no longer guaranteed. We will not deal with this case in the present paper.

The main difficulty encountered in \cite{CD} was the lack of a known parametrix in the vicinity of the logarithmic singularity, a key ingredient in the usual nonlinear steepest descent analysis. Hence, the authors relied instead on a technically involved comparison to the Legendre problem with weight function $w_{Leg}(x) \equiv 1$, $x \in (-1,1)$. Surprisingly, their argument could not be generalized in an obvious way to the weight function $w = \lim_{k\to 1} w_k$, due to the appearance of a~simple zero of $w$ at $x = -1$. While, an analogous comparison to the Jacobi problem with weight function~$w_{Jac}(x) = 1+x$ (or something similar) seems suggestive, significant challenges remain due to the presence of the simple zero. In particular, the crucial Theorem 4.7 in \cite{CD} requires a different proof in this case: now one needs to control the behaviour of the Cauchy operator acting on spaces with Muckenhoupt weights.

Orthogonal polynomials with logarithmic weight functions, have applications to both pure mathematics and physics. In particular, apart from logarithmic singularities, these weight functions also tend to have zeros (see \cite[Section~8]{Mag18} and \cite{VA}). Such applications motivate us in the present paper to extend the results in~\cite{CD} to the weight function $w$ in~\eqref{wlog}, having both a~logarithmic singularity and a simple zero.

\begin{Remark}
 One might think, a priori, that the vanishing of a weight $w(x)$ at a point should not give rise to serious technical difficulties. Naively, it would appear that only singularities in the weight, and not zeros, should present obstacles. In this regard, we recall the hope and the prophecy of Lenard's 1972 paper~\cite{Lenard}:
 \begin{quote}
 ``It is the author’s hope that a
rigorous analysis will someday carry the results to the point where the true role of the zeros of the
generating function will be understood. When that day comes a capstone will have been put on a
beautiful edifice to whose construction many contributed and whose foundations lie in the studies
of Gabor Szeg\H{o} half a century ago''.
\end{quote}
\end{Remark}

\subsection[Relation of the present work to cite\{CD\}]{Relation of the present work to \cite{CD}}

Significant parts of the analysis performed in the present paper are based on the analysis introduced in \cite{CD}. Hence, we will repeatedly refer to that paper for proofs of certain statements. This is justified by the fact that the majority of estimates found in \cite{CD} do not depend on the distinction $k > 1$ and $k = 1$ in \eqref{wk}. Thus, the proofs of many of the results will also hold for the weight function \eqref{wlog} that we are interested in. There are however certain propositions which have their analogs in~\cite{CD}, but still deserve a separate proof due to some minor differences. These are Proposition~\ref{Proprr} which is the analog of Proposition~2.5 in~\cite{CD}, and Proposition~\ref{IntegralEst2} which is the analog of Propositions~5.3 and~5.4 in~\cite{CD}. In the case of Proposition~\ref{Proprr}, it is necessary to prove a slightly more general result than Proposition~2.5 in \cite{CD}. Meanwhile, Proposition~6.5 contains an application of Proposition~\ref{Proprr} in its more general form, and uses different RH solutions than Propositions~5.3 and 5.4 in \cite{CD}. Both results are proven in Appendix~\ref{appendixA}.

Finally, let us reiterate that the analog of Theorem~4.7 in \cite{CD} concerning the uniform boundedness of the inverse of a certain singular integral operator requires a completely different approach to the case $k = 1$, due to the appearance of a simple zero in the weight function $w$ at $x = -1$. This necessitates the construction of an appropriate local parametrix in the vicinity of the zero, which is then used to invert the RH problem locally. While the local parametrix is well-known and can be expressed in terms of Bessel functions, see \cite[Section~6]{KMVV}, its appearance significantly complicates the analysis that follows. Crucially, the method of proving Theorem~4.7 in~\cite{CD} is no longer sufficient in this new setting. In fact, the material found in Sections \ref{Section_LegendreResolvent}--\ref{ModifiedRHP} is entirely devoted to formulating and proving Theorems~\ref{UniformBoundedness} and~\ref{TheoremUniInv}, which are the analogs of Theorem~4.7 in \cite{CD}. Here, a key role is played by Proposition~\ref{ExpForm}, which in a sense localizes the effect that the logarithmic singularity has on the uniform invertibility of the associated singular integral operator. Interestingly, Theorem~4.7 in~\cite{CD} itself plays a crucial role in the proofs of these results. Sections \ref{Section_LegendreResolvent}--\ref{ModifiedRHP} contain the main novelties of the present work.

\subsection{Outline of the paper}

In the following, we will briefly summarize the content of each section.
\begin{itemize}\itemsep=0pt
 \item In Section~\ref{AuxiliaryFunctions}, we introduce two auxiliary weight functions, the model and the Legendre weight function, together with related quantities, which will be relevant for the RH analysis. We also list certain estimates and asymptotic results which will be used in later sections.
 \item In Section~\ref{RHProblems}, we introduce the Fokas--Its--Kitaev RH problem for orthogonal polynomials. We proceed to perform the necessary conjugation and deformation steps to arrive at three distinct RH problems amenable to asymptotics analysis: the logarithmic, model and Legendre RH problems. We then state the relation between solutions of these RH problems and the corresponding recurrence coefficients.
 \item In Section~\ref{Section_LegendreResolvent}, we derive an explicit formula for the Legendre resolvent, that is, the inverse of a singular integral operator associated to the Legendre RH problem. This formula is not used in \cite{CD}.
 \item In Section~\ref{LocPar}, we perform a detailed analysis of the known local parametrices for the RH problem near the point $-1$, which can be constructed explicitly using Bessel functions, as in \cite[Section~6]{KMVV}. This leads, in particular, to uniform asymptotics on their growth as~$n \to \infty$.
 \item In Section~\ref{ModifiedRHP}, we introduce modified versions of the three previously mentioned RH problems using the appropriate local parametrices from Section~\ref{RHProblems} around $z = -1$.
 These modified RH problems are better suited when comparing their associated resolvents.
 Thus, by showing the uniform invertibility of the modified Legendre resolvent, we obtain the uniform invertibility of the modified logarithmic and model resolvents, thereby proving an analog of Theorem~4.7 in~\cite{CD}.
 \item In Section~\ref{SectAsymptoticAnalysis}, we derive an asymptotic formula expressing the difference between the recurrence coefficients for the logarithmic weight and the recurrence coefficients for the model weight. The aforementioned uniform invertibility of the associated resolvents plays a crucial part in this argument. As the asymptotics of the recurrence coefficients for the model weight is known, Theorem~\ref{MainTheorem} follows.
 \item In Appendix~\ref{appendixA}, we provide some proofs of more technical nature that are omitted from the main text.
\end{itemize}
It is also worth mentioning that a common step in the RH analysis -- the construction of a local parametrix -- is not performed in the vicinity of the logarithmic singularity at $+1$. The reason is, quite simply, as in \cite{CD}, that we were unable to find the local parametrix in the presence of such a singularity. Similar instances in which the local parametrix was not constructed explicitly can be found in \cite[Section~5]{DKMVZ1} and~\cite{KMcL99}, where non-constructive Fredholm methods were used instead. For a discussion of RH problems without explicitly solvable local parametrices, see~\cite{MP21}.

\subsection{Notation}

Throughout this paper, all contours that arise are finite unions of smooth and oriented arcs, with a finite number of points of (self)intersection. More details can be found in the book~\cite{BK} which treats a more general class of so-called Carleson contours.

Let $\Gamma \subset \C$ be such a contour and $m$ an analytic function on $\C \setminus \Gamma$. For $s \in \Gamma$, we will denote by $m_{\pm}(s)$ the limit of $m(z)$ as $z \to s\pm$, provided this limit exists. The notation $z \to s\pm$ denotes a nontangential limit in $\C \setminus \Gamma$ to $s \in\Gamma$, from the $+$, resp.\ $-$, side of the contour, see Figure~\ref{FigContour}. Recall that as $\Gamma$ is taken to be oriented, this notion is well-defined away from the points of intersection. Everything generalizes to matrix-valued functions $m$ in a straightforward manner.

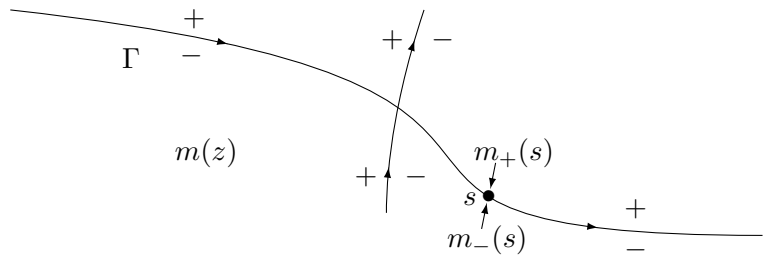
\begin{figure}[th]\centering\vspace*{2mm}
\begin{tikzpicture}
\draw (-5,2) .. controls (4,1) and (-2,-1) .. (5,-1);
\draw (0,-0.7) .. controls (0,-0.5) and (0,0.5) .. (0.5,2);
\put(-100pt,35pt){$\Gamma$}
\put(-80pt,0pt){$m(z)$}

\put(-60pt,44pt){\vector(4,-1){0.2pt}}
\put(-77pt,51pt){$+$}
\put(-78pt,37pt){$-$}

\put(80pt,-25.5pt){\vector(8,-1){0.2pt}}
\put(90pt,-21pt){$+$}
\put(90pt,-36.5pt){$-$}

\put(10.5pt,45pt){\vector(1,3){0.2pt}}
\put(-2pt,43pt){$+$}
\put(17pt,43pt){$-$}

\put(0.7pt,-4pt){\vector(1,8){0.2pt}}
\put(-12pt,-9pt){$+$}
\put(7pt,-9pt){$-$}


\put(23pt,-33pt){$m_-(s)$}
\put(33pt,0pt){$m_+(s)$}
\put(36pt,-16pt){$\bullet$}
\put(36pt,-26pt){\vector(0.4,2){2pt}}
\put(41.3pt,-1pt){\vector(-0.4,-2){2pt}}
\put(29pt,-17pt){$s$}
\end{tikzpicture}
\caption{A contour with a point of intersection.}\label{FigContour}
\end{figure}

We will denote by $\C_\pm$ the upper, resp.~lower open half plane of $\C$ and use the notation $\overline{\C}$ for the Riemann sphere $\C \cup \lbrace \infty \rbrace$. Unless specified otherwise, $z^{1/2}$, $z \in \C \setminus (-\infty, 0)$ will denote the principal branch of the square root.

For two sequences $A_n$ and $B_n$ in a normed space, we will use the notation $A_n \lesssim B_n$ if there exists a $c > 0$ and $N \in \mathbb{N}$ such that $\Vert A_n \Vert\leq c\Vert B_n \Vert$ for all $n \geq N$. Equivalently, we will sometimes use the notation $A_n = O(B_n)$. A similar definition holds if $n$ is substituted for a continuous variable, e.g., $f(x) \lesssim g(x)$ (equivalently $f(x) = O(g(x))$) for $x \to x_0$ means $\Vert f(x) \Vert \leq c \Vert g(x) \Vert$ for some $c > 0$ and all $x$ satisfying $|x-x_0| \leq \varepsilon$.

Finally, for a $d\times d$ dimensional measurable matrix-valued function $f(s)$, $s \in \Gamma$, we write $f \in L^p(\Sigma)$, with $p \in [1,\infty)$, if and only if \begin{align*}
 \Vert f \Vert_{L^p(\Gamma)} := \Biggl(\int_\Gamma \sum_{i,j=1}^d |f_{ij}(s)|^p |{\rm d}s|\Biggr)^{\frac{1}{p}} < \infty,
\end{align*}
where $|{\rm d}s|$ denotes the arc length measure of $\Gamma$, and $f \in L^\infty(\Gamma)$ if and only if
\begin{align*}
 \Vert f \Vert_{L^\infty(\Gamma)} := \max_{i,j = 1 \dots d}\lbrace \Vert f_{ij} \Vert_{L^\infty(\Gamma)}\rbrace < \infty.
\end{align*}
In particular, $\Vert f \Vert_{L^2(\Gamma)}$ denotes the $L^2$-norm on $\Gamma$ of the Hilbert--Schmidt norm of $f(s)$.
Generalizations to weighted $L^p$-spaces are introduced in Section~\ref{Section_LegendreResolvent}.


\section{Auxiliary functions used in the RH analysis}
\label{AuxiliaryFunctions}
\subsection{The model and Legendre weight functions}
To obtain the recurrence coefficients related to the weight function $w$, we will have to compare with a different \emph{model} weight function given by
\begin{align*}
 \hw(x) = (1+x)\E^{d_0x}, \qquad x \in [-1,1],
\end{align*}
where $d_0 \in \R$ is determined via \eqref{d0}. As we will see, the choice of $d_0$ gives an error estimate in \eqref{3/2decay} of order $O\big(|1+z|^{3/2}\big)$, rather that $O\big(|1+z|^{1/2}\big)$, as in part 3 of Proposition~2.3 in \cite{CD}. This extra decay as $z \to -1$ considerably simplifies the proof of the key Lemma \ref{FinalLemma}.

Note that $\hw$ can be analytically extended to an entire function. Moreover, $\hw(x)$ has a~simple zero at $x = -1$ and a finite positive value at $x = +1$. As it lies in the class of weight functions considered in \cite{KMVV}, the corresponding RH analysis is well understood. Observe that our requirements on $\hw$ do not specify it uniquely, hence one could have performed the comparison argument with other choices of model weight functions as well.

As we will heavily rely on the arguments found in \cite{CD}, we will also introduce the Legendre weight function
\begin{align*}
 \tw(x) = 1, \qquad x \in [-1,1].
\end{align*}
As shown in \cite[Theorem 4.7]{CD} (see also Theorem \ref{UnifLeg}), the weight $\tw$ gives rise to a singular integral operator defined in Section~\ref{Section_LegendreResolvent}, with an inverse that is uniformly bounded as $n \to \infty$. However, $\tw$ does not approximate the logarithmic weight function $w$ for $x\to -1$, due to the presence of a simple zero at that point. In contrast, the weight $\hw$ approximates the logarithmic weight function $w$ for $x \to -1$, however for $\hw$ the analogous singular integral operator is not invertible in $L^2$ (see property (iv) in RH problem (see Section~\ref{ModRHP}) showing that the solution will not be square integrable). This is the essential technical difficulty that we face in this paper.

\subsection[The Szego functions]{The Szeg\H{o} functions}
To perform the nonlinear steepest descent analysis, we need to define the Szeg\H{o} function $F$ associated to the logarithmic weight function $w$:
\begin{align}\label{SzegoF}
 F(z) = \exp \Bigg( \dfrac{\big(z^2-1\big)^{1/2}}{2 \pi}\int_{-1}^1 \dfrac{\log w(s)}{\sqrt{1-s^2}} \dfrac{{\rm d}s}{z-s} \Bigg), \qquad z \in \C \setminus [-1,1].
\end{align}
Here $\big(z^2-1\big)^{1/2}$ is uniquely specified as an analytic function having a branch cut along $(-1,1)$, and \smash[b]{$\big(z^2-1\big)^{1/2} \approx z$} for $z\to \infty$. Analogously, we define \smash{$\hF$} to be the Szeg\H{o} function associated to the model weight function $\hw$:
\begin{align*}
 \hF(z) = \exp \Bigg( \dfrac{\big(z^2-1\big)^{1/2}}{2 \pi}\int_{-1}^1 \dfrac{\log \hw(s)}{\sqrt{1-s^2}} \dfrac{{\rm d}s}{z-s} \Bigg), \qquad z \in \C \setminus [-1,1].
\end{align*}
Note that as $\log \tw \equiv 0$, the Szeg\H{o} function for the Legendre weight $\tw$ is trivial: $\widetilde F \equiv 1$.

The Szeg\H{o} functions $F$, $\hF$ satisfy the following properties which are crucial for the RH analysis.

\begin{Proposition}\label{PropertiesF}
The functions $F, \hF \colon \C \setminus [-1,1]\to \C$ satisfy the following properties:
\begin{enumerate}[$(i)$]\itemsep=0pt
 \item $F(z)$, $\hF(z)$ are analytic for $z \in \C \setminus [-1,1]$, with $F(\overline{z}) = \overline{F(z)}$ and $\hF(\overline{z}) = \overline{\hF(z)}$,
 \item $\lim_{z\to\infty} F(z) = F_\infty \in \R_+$, $\lim_{z\to\infty} \hF(z) = \widehat F_\infty \in \R_+$,
 \item $F_+(x)F_-(x) = w(x)$, $\hF_+(x)\hF_-(x) = \hw(x)$ for $x \in [-1,1)$,
 \item $|F_\pm(x)|^2 = w(x)$, $\big|\hF_\pm(x)\big|^2 = \hw(x)$ for $x \in [-1,1)$.
\end{enumerate}
\end{Proposition}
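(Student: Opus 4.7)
The plan is to verify the four properties in turn using the standard Plemelj-Sokhotski calculus, treating $F$ and $\hF$ in parallel since only the integrability of $\log w$ (resp.\ $\log \hat w$) against $1/\sqrt{1-s^2}$ differs between them.

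\textbf{Step 1 (Well-definedness and analyticity, part (i)).} First I would check that the integrand $\log w(s)/\sqrt{1-s^2}$ is integrable on $[-1,1]$. Near $s=+1$ the weight $w(s) = \log(2/(1-s))$ blows up only doubly-logarithmically, so $\log w(s)$ is $O(\log\log(1/(1-s)))$, which is integrable against $1/\sqrt{1-s}$. Near $s=-1$ one has $w(s) \sim (1+s)/2$, so $\log w(s) \sim \log(1+s)$, again integrable against $1/\sqrt{1+s}$. The analogous estimates for $\log \hat w(s) = \log(1+s) + d_0 s$ are even simpler. Then the Cauchy-type integral
\[
g(z) := \frac{(z^2-1)^{1/2}}{2\pi}\int_{-1}^{1}\frac{\log w(s)}{\sqrt{1-s^2}}\frac{{\rm d}s}{z-s}
\]
is analytic on $\mathbb{C}\setminus[-1,1]$ as the product of two such analytic functions, and $F=\E^{g}$ inherits analyticity. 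Schwarz reflection $F(\bar z)=\overline{F(z)}$ follows from the fact that $\log w$ and $1/\sqrt{1-s^2}$ are real on $(-1,1)$, while both $(z^2-1)^{1/2}$ and $1/(z-s)$ satisfy the symmetry under $z \mapsto \bar z$.

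\textbf{Step 2 (Behavior at infinity, part (ii)).} As $z\to\infty$, I would use $(z^2-1)^{1/2}/(z-s) = 1 + O(1/z)$ uniformly in $s\in[-1,1]$ to conclude
\[
\lim_{z\to\infty} g(z) = \frac{1}{2\pi}\int_{-1}^{1}\frac{\log w(s)}{\sqrt{1-s^2}}\,{\rm d}s \;\in\;\R,
\]
so $F_\infty = \exp$ of a real number, hence in $\R_+$; the same argument handles $\hF_\infty$.

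\textbf{Step 3 (Jump relation, part (iii)).} This is the only step with real content. Denote $R(z):=(z^2-1)^{1/2}$, so $R_+(x) = -R_-(x) = \I\sqrt{1-x^2}$ for $x\in(-1,1)$, and set $\phi(s):=\log w(s)/\sqrt{1-s^2}$. The Plemelj-Sokhotski formulas applied to the Cauchy integral $\int_{-1}^{1}\phi(s){\rm d}s/(z-s)$ give, for $x\in(-1,1)$,
\[
\left[\int\frac{\phi(s){\rm d}s}{z-s}\right]_{+}(x) - \left[\int\frac{\phi(s){\rm d}s}{z-s}\right]_{-}(x) = -2\pi \I\,\phi(x).
\]
Combining with $R_+ = -R_-$, the boundary values of $g$ satisfy
\[
g_+(x) + g_-(x) = \frac{R_+(x)}{2\pi}\bigl(\text{jump of the Cauchy integral}\bigr) = \sqrt{1-x^2}\,\phi(x) = \log w(x),
\]
so exponentiating yields $F_+(x)F_-(x) = w(x)$. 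The identity for $\hF$ is identical after replacing $w$ by $\hw$, valid on $[-1,1)$ (the endpoint $x=+1$ being excluded exactly because $w(1)=+\infty$).

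\textbf{Step 4 (Modulus identity, part (iv)).} By the Schwarz reflection established in Step 1, $F_-(x) = \lim_{\eps\downarrow 0} F(x-\I\eps) = \overline{F_+(x)}$ for $x\in(-1,1)$, so $|F_\pm(x)|^2 = F_+(x)F_-(x) = w(x)$ by (iii), and likewise for $\hF$. There is no main obstacle here; the only real care is in Step 1, to confirm that the mild integrability failure of $\log w$ at both endpoints does not prevent the Cauchy integral from being well-defined and bounded on compact subsets of $\mathbb{C}\setminus[-1,1]$, and to keep track of the branch choice of $(z^2-1)^{1/2}$ consistently throughout the Plemelj calculation in Step~3.
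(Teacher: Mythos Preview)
Your proof is correct and is the standard verification of the Szeg\H{o} function properties via the Plemelj--Sokhotski formula; the paper itself states Proposition~\ref{PropertiesF} without proof, treating these facts as classical. The only small remark is that in Step~3 you should perhaps make explicit that the Plemelj jump holds pointwise for $x\in(-1,1)$ because $\phi(s)=\log w(s)/\sqrt{1-s^2}$, while not in $L^2$, is locally H\"older (or at least locally $L^1$) away from the endpoints, which is enough; but this is routine and your outline already flags the endpoint integrability issue.
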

Note that at $x = -1$, the limits of $F(z)$, $\hF(z)$ are equal to $0$. In particular, they are independent of the path $z \to -1$, and we write $F_\pm(-1) = \lim_{z\to -1} F(z) = \lim_{x \downarrow -1} F_\pm(x) = 0$ and $\hF_\pm(-1) = \lim_{z\to -1} \hF(z) = \lim_{x \downarrow -1} \hF_\pm(x) = 0$ in this case.
We also introduce the function~$\phi$ given by (see \cite[Proposition~2.1]{CD})
\begin{align*}
 \phi(z) = z + \big(z^2-1\big)^{1/2}, \qquad z \in \C \setminus [-1,1].
\end{align*}
With this choice, we see that $\phi$ defines a biholomorphism between $\C \cup\lbrace \infty \rbrace \setminus [-1,1]$ and ${\C \cup \lbrace \infty \rbrace \setminus \lbrace z\colon |z| \leq 1 \rbrace}$, mapping $\infty$ to itself. In particular, the function $\phi$ satisfies
\begin{align}\label{phiGreater1}
 |\phi(z)| > 1, \qquad z \in \C \setminus [-1,1],
\end{align}
and
\begin{align*}
 \phi(z) = 2z + O\left(\frac{1}{z}\right) \qquad \mbox{as}\quad z \to \infty.
\end{align*}
Moreover, the inequality \eqref{phiGreater1} holds uniformly away from the interval $[-1,1]$, while on the interval we have
\begin{align}\label{PhionInterval}
 \lim_{z\to x}|\phi(z)| = 1, \qquad x\in[-1,1].
\end{align}
Additionally, as $\phi(\overline{z}) = \overline{\phi(z)}$ for $z \in \C \setminus [-1,1]$, it follows from \eqref{PhionInterval} that
\begin{align*}
 \phi_+(x)\phi_-(x) = 1, \qquad x \in (-1,1).
\end{align*}
Near the points $z = \pm 1$, we have
\begin{align}\label{phiEndpoints}
\begin{split}
 \phi(z) &= 1 + \sqrt{2}(z-1)^{1/2} + O(|z -1|), \qquad z \to +1,
 \\
 \phi(z) &= -1 \pm \sqrt{2}\I(z+1)^{1/2} + O(|z+1|), \qquad z \to -1, \quad z \in \C_\pm.
\end{split}
\end{align}

For the subsequent analysis, it is necessary to understand the behaviour of the functions $\frac{F^2}{w}(z)$, $\frac{\hF^2}{\hw}(z)$ near the points $z = \pm 1$, as these functions show up in the jump matrices of the corresponding RH problems, see Section~\ref{RHProblems}.
\begin{Proposition} \label{F^2/w}
The function $\frac{F^2}{w} \colon \C \setminus [-1,\infty) \to \C$ satisfies
\begin{align}\label{F^2/w at +1}
 \frac{F^2}{w}(z) = 1 \mp \frac{\I \pi}{w(z)}-\frac{\pi^2}{2w^2(z)} +O\left(\frac{1}{\log^3(z-1)}\right)
\end{align}
uniformly for any path $z\to +1$ in $\C \setminus [-1,\infty)$, where the $-$ is taken for~$\im(z)> 0$ and $+$ for~$\im(z) < 0$, and also
\begin{align}\label{F^2/w at -1}
 \frac{F^2}{w}(z) = \phi(z)^{-1}\exp\bigl\lbrace-\big(z^2-1\big)^{1/2}\big(d_0+O(|z+1|)\big)\bigr\rbrace,
\end{align}
uniformly for any path $z\to-1$ in $\C \setminus [-1,\infty)$, where
\begin{align}\label{d0}
 d_0 = \frac{1}{2\pi\I} \int_\gamma \frac{\log\big(w(\zeta)/(1+\zeta)\big)}{(\zeta^2-1)^{1/2}}\frac{{\rm d}\zeta}{\zeta+1}
\end{align}
and $\gamma$ is an oriented contour originating from the point $\zeta = 1$, going anticlockwise around the interval $[-1,1]$ and ending again at the point $\zeta = 1$ as depicted in Figure~{\rm \ref{ContourGamma}}.
\end{Proposition}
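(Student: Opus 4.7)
Both parts stem from the integral identity
\begin{align*}
    \log \frac{F^2(z)}{w(z)} = \frac{(z^2-1)^{1/2}}{2\pi\I}\oint_\gamma \frac{\log w(\zeta) - \log w(z)}{(\zeta^2-1)^{1/2}(z-\zeta)}\,{\rm d}\zeta,
\end{align*}
valid for $z$ outside a positively oriented closed loop $\gamma$ encircling $[-1,1]$ within the domain of analyticity of $\log w$, i.e., avoiding $(-\infty,-1]\cup[1,\infty)$. This identity follows by first converting the Szeg\H{o} integral in~\eqref{SzegoF} to a closed contour integral around $[-1,1]$ using the jump $(\zeta^2-1)^{1/2}_\pm = \pm\I\sqrt{1-\zeta^2}$ on $(-1,1)$, and then subtracting $\log w(z)$ via the auxiliary identity $\frac{(z^2-1)^{1/2}}{2\pi\I}\oint_\gamma \frac{{\rm d}\zeta}{(\zeta^2-1)^{1/2}(z-\zeta)} = 1$, obtained by deforming $\gamma$ to infinity and collecting the residue at $\zeta = z$.

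For part 1 (asymptotics at $+1$), I would follow the local analysis of \cite[Proposition~2.3]{CD}, which treats the same logarithmic singularity: localize the contour integral to a neighborhood of $\zeta = 1$, perform a rescaling tying the $\zeta$- and $z$-scales together, and extract the leading $\mp\I\pi/w(z)$ contribution, whose sign depends on the side from which $z$ approaches $+1$. Exponentiating yields the claimed expansion~\eqref{F^2/w at +1}, with the $-\pi^2/(2w^2)$ coming from the quadratic part of the exponential series; correspondingly, the $1/w^2$ coefficient of $\log(F^2/w)$ must vanish, a subtle cancellation that is the main technical point.

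For part 2 (asymptotics at $-1$), the novelty relative to \cite{CD} is the simple zero of $w$ at $-1$. The plan is to factor
\begin{align*}
    w(z) = (1+z)v(z),\qquad v(z) := \frac{w(z)}{1+z},
\end{align*}
so that $v$ extends analytically to $\C\setminus[1,\infty)$ with $v(-1) = \tfrac{1}{2}\neq 0$. Additivity of the Szeg\H{o} integral then gives $F = F_1 F_v$, where $F_1(z) = \bigl((1+z)/\phi(z)\bigr)^{1/2}$ is the explicit Szeg\H{o} function for the Jacobi weight $1+x$ (uniquely determined by $F_{1,+}F_{1,-} = 1+x$ on $(-1,1)$ and $F_1(\infty) > 0$), and $F_v$ is the Szeg\H{o} function of $v$. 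Since $F_1^2(z)/(1+z) = \phi(z)^{-1}$, this gives
\begin{align*}
    \frac{F^2(z)}{w(z)} = \phi(z)^{-1}\cdot\frac{F_v^2(z)}{v(z)},
\end{align*}
so it suffices to show $\log(F_v^2/v)(z) = -(z^2-1)^{1/2}(d_0 + O(|z+1|))$ as $z\to -1$.

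Applying the integral identity with $w$ replaced by $v$, and using that $\log v$ is analytic in a neighborhood of $\zeta = -1$, the integrand depends smoothly on $z$ near $-1$ uniformly in $\zeta\in\gamma$. Taking $z\to -1$ and using $\oint_\gamma \frac{{\rm d}\zeta}{(\zeta^2-1)^{1/2}(\zeta+1)} = 0$ (deform to infinity; the integrand is $O(|\zeta|^{-2})$) to freely subtract the constant $\log v(-1)$, one finds
\begin{align*}
    \lim_{z\to -1}\frac{1}{2\pi\I}\oint_\gamma \frac{\log v(\zeta) - \log v(z)}{(\zeta^2-1)^{1/2}(z-\zeta)}\,{\rm d}\zeta = -\frac{1}{2\pi\I}\oint_\gamma \frac{\log v(\zeta)\,{\rm d}\zeta}{(\zeta^2-1)^{1/2}(\zeta+1)} = -d_0.
\end{align*}
The $O(|z+1|)$ correction follows from a uniform bound on the $z$-derivative of the integrand along $\gamma$, which multiplied by the prefactor $(z^2-1)^{1/2}$ yields the improved $O(|z+1|^{3/2})$ decay highlighted in the paper. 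The chief difficulty thus lies in part 1; part 2, by contrast, reduces to this limit calculation once the Jacobi factor is peeled off into $\phi(z)^{-1}$.
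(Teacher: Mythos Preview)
Your approach is essentially the paper's: factor out the Jacobi Szeg\H{o} function $(z+1)^{1/2}/\phi(z)^{1/2}$ to expose the $\phi(z)^{-1}$ prefactor, then convert the remaining Szeg\H{o} integral for $v=w/(1+z)$ into a contour integral around $[-1,1]$ and expand near $z=-1$ (your subtracted identity with removable singularity at $\zeta=z$ is exactly the paper's residue computation~\eqref{ResiCalc} rewritten). The one imprecision is that $\gamma$ cannot be a genuine \emph{closed} loop encircling $[-1,1]$ while avoiding the cut $[1,\infty)$ of $\log v$; as in the statement and as the paper emphasizes, $\gamma$ is the open arc from $\zeta=1$ back to $\zeta=1$, the iterated-log singularity of $\log v$ being integrable at that endpoint.
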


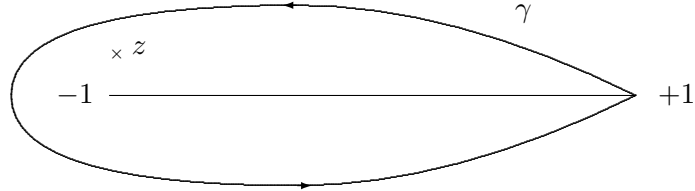
\begin{figure}[th]\vspace*{8mm}
 \centering
 \hspace*{5mm}\begin{picture}(0,0)
 \put(-3.5,0){\line(1,0){7.0}}
 \put(-1,1.2){\vector(-1,0){0.2}}
 \put(-1,-1.2){\vector(1,0){0.2}}

 \curve(3.5,0, -1,1.2, -4.8,0, -1,-1.2, 3.5,0)

 \put(3.8,-0.1){$+1$}
 \put(-4.2,-0.1){$-1$}
 \put(-3.5,0.5){\text{\tiny{$\times$}}}
 \put(-3.2,0.55){$z$}

 \put(1.9,1.05){$\gamma$}
 \end{picture}
 \vspace{10mm}
 \caption{The contour $\gamma$ encircling the point $z$.} \label{ContourGamma}
\end{figure}

\begin{proof}
For the proof of statement \eqref{F^2/w at +1}, see \cite[Proposition~A.1]{CD}. Statement \eqref{F^2/w at -1} is a special case of \cite[Lemma~6.6]{KMVV}, but with an additional restriction on the choice of the contour $\gamma$ stemming from the logarithmic singularity of $w$. Due to this technicality, we repeat the proof found therein.

First, let us note that if $F_1$ is the Szeg\H{o} function of a weight $w_1$ and $F_2$ the Szeg\H{o} function of a weight $w_2$, then the Szeg\H{o} function $F_{12}$ of the product $w_1w_2$ is given by the product of the individual Szeg\H{o} functions: $F_{12} = F_1 F_2$. As the Szeg\H{o} function of a Jacobi weight $w_{\alpha,\beta}(x) = (1-x)^\alpha (1+x)^\beta$ with $\alpha, \beta > -1$, is given by (see \cite[Remark 5.1]{KMVV})
\begin{align*}
 F_{\alpha,\beta}(z) = \frac{(z-1)^{\alpha/2}(z+1)^{\beta/2}}{\phi(z)^{\frac{\alpha+\beta}{2}}},
\end{align*}
we can conclude that
\begin{align}
 F(z) ={}& \frac{(z+1)^{1/2}}{\phi(z)^{1/2}}\nonumber\\
 &\times\exp \Bigg( \dfrac{\big(z^2-1\big)^{1/2}}{2 \pi}\int_{-1}^1 \dfrac{\log (w(s)/(1+s))}{\sqrt{1-s^2}} \dfrac{{\rm d}s}{z-s} \Bigg),
 \qquad z \in \C \setminus [-1,1].\label{SzegoFact}
\end{align}
Again, $(z+1)^{1/2}$ and $\phi(z)^{1/2}$ denote the principal branches (a simple calculation shows that $\phi(z) \not \in (-\infty, -1]$ for $z \in \C \setminus (-\infty, 1]$). Moreover, for $x < -1$, $\big(\phi(x)^{1/2}\big)_+ = -\big(\phi(x)^{1/2}\big)_-$ as~$\phi(z) \sim 2z$ for $z \to \infty$. Thus $(z+1)^{1/2}/\phi(z)^{1/2}$ is indeed analytic in $\C\setminus [-1,1]$.

To analyze the argument of the exponential in \eqref{SzegoFact}, we choose for a fixed $z \in \C \setminus [-1,1]$ a~contour $\gamma$ as shown in Figure~\ref{ContourGamma}. Then a residue calculation shows that
\begin{align}\nonumber
 &\frac{1}{\pi} \int_{-1}^1 \frac{\log(w(s)/(1+s))}{\sqrt{1-s^2}} \frac{{\rm d}s}{z-s}
 \\\label{ResiCalc}
 &\qquad= \frac{\log(w(z)/(1+z))}{\big(z^2-1\big)^{1/2}} - \frac{1}{2\pi\I} \int_\gamma\frac{\log(w(\zeta)/(1+\zeta))}{\big(\zeta^2-1\big)^{1/2}} \frac{{\rm d}\zeta}{\zeta-z}.
\end{align}
Here we note the $w(z)/(1+z)$ is analytic and non-zero in the simply connected region $\C \setminus [1,\infty)$, and hence $\log(w(z)/(1+z))$ exists and is analytic in this region. Moreover, the integrand in~\eqref{ResiCalc} is integrable along $\gamma$ as long as $z \not \in \gamma$.

Note also that as $\log(w(\zeta)/(1+\zeta))$ has an iterated logarithmic singularity at $\zeta = 1$, we cannot deform $\gamma$ away from this point. Plugging \eqref{SzegoFact} and \eqref{ResiCalc} into the definition of $\Fw$, we obtain
\begin{align*}
 \Fw(z) = \phi(z)^{-1} \exp \Biggl( - \frac{\big(z^2-1\big)^{1/2}}{2\pi\I} \int_\gamma\frac{\log(w(\zeta)/(1+\zeta))}{\big(\zeta^2-1\big)^{1/2}} \frac{{\rm d}\zeta}{\zeta-z} \Biggr).
\end{align*}
We compute the Taylor expansion around $\zeta = -1$ to obtain
\begin{align*}
 \frac{1}{2\pi\I} \int_\gamma\frac{\log(w(\zeta)/(1+\zeta))}{\big(\zeta^2-1\big)^{1/2}} \frac{{\rm d}\zeta}{\zeta-z} = \sum_{k=0}^\infty d_k (z+1)^k,
\end{align*}
where $d_k$ is given by
\begin{align*}
 d_k = \frac{1}{2\pi\I} \int_\gamma \frac{\log\big(w(\zeta)/(1+\zeta)\big)}{(\zeta^2-1)^{1/2}}\frac{{\rm d}\zeta}{(\zeta+1)^{k+1}}.
\end{align*}
This finishes the proof.
\end{proof}

The analog of Proposition~\ref{F^2/w} for $\hFw$ is more elementary.

\begin{Proposition}
The function $\frac{\hF^2}{\hw} \colon \C \setminus [-1,1] \to \C$ satisfies
\begin{align}\label{hat F^2/w at +1}
 \frac{\hF^2}{\hw}(z) = 1 + O\big(|z-1|^{1/2}\big)
\end{align}
uniformly for any path $z \to +1$ in $\C \setminus [-1,1]$ and can be written as
\begin{align}\label{hat F^2/w at -1}
 \frac{\hF^2}{\hw}(z) = \phi(z)^{-1}\exp\bigl\lbrace-\big(z^2-1\big)^{1/2}d_0\bigr\rbrace.
\end{align}
\end{Proposition}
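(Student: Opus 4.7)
The plan is to follow the structure of the proof of Proposition \ref{F^2/w} but exploit the crucial simplification that $\log(\hw(\zeta)/(1+\zeta)) = d_0 \zeta$ is entire and grows only linearly in $\zeta$, which will allow explicit evaluation of the Szeg\H{o} integral rather than a contour-integral representation.

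First I would invoke the multiplicativity of Szeg\H{o} functions under factorizations of the weight, together with the explicit Jacobi Szeg\H{o} function $F_{0,1}(z) = (z+1)^{1/2}/\phi(z)^{1/2}$ recalled in the proof of Proposition~\ref{F^2/w}, to write
\begin{align*}
\hF(z) = \frac{(z+1)^{1/2}}{\phi(z)^{1/2}}\exp\Bigg(\frac{\big(z^2-1\big)^{1/2}}{2\pi}\int_{-1}^1 \frac{d_0 s}{\sqrt{1-s^2}}\frac{{\rm d}s}{z-s}\Bigg),
\end{align*}
in direct analogy with \eqref{SzegoFact}. The inner integral is computed in closed form by the substitution $s = \cos\theta$ and the classical identity $\int_0^\pi {\rm d}\theta/(z-\cos\theta) = \pi/\big(z^2-1\big)^{1/2}$ valid for $z \in \C \setminus [-1,1]$ with the principal-branch convention of the paper. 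This yields
\begin{align*}
\int_{-1}^1 \frac{d_0 s}{\sqrt{1-s^2}}\frac{{\rm d}s}{z-s} = d_0\pi \left(\frac{z}{\big(z^2-1\big)^{1/2}}-1\right),
\end{align*}
so that the argument of the exponential collapses to $\frac{d_0}{2}\big(z-\big(z^2-1\big)^{1/2}\big) = \frac{d_0}{2\phi(z)}$.

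Plugging back in and computing $\hF^2/\hw$ directly, the factor $1+z$ cancels and I obtain
\begin{align*}
\frac{\hF^2(z)}{\hw(z)} = \phi(z)^{-1}\exp\!\left\{\frac{d_0}{\phi(z)} - d_0 z\right\} = \phi(z)^{-1}\exp\bigl\{-d_0\big(z^2-1\big)^{1/2}\bigr\},
\end{align*}
where in the last step I use the identity $1/\phi(z) - z = -\big(z^2-1\big)^{1/2}$, which is immediate from $\phi(z) = z + \big(z^2-1\big)^{1/2}$. This establishes \eqref{hat F^2/w at -1}. The estimate \eqref{hat F^2/w at +1} then follows at once by feeding in the local expansion at $z = +1$ from \eqref{phiEndpoints}, which gives $\phi(z)^{-1} = 1 + O\big(|z-1|^{1/2}\big)$ and $\exp\bigl\{-d_0\big(z^2-1\big)^{1/2}\bigr\} = 1 + O\big(|z-1|^{1/2}\big)$, so the product is $1 + O\big(|z-1|^{1/2}\big)$ uniformly in the approach direction.

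I do not anticipate a genuine obstacle: the statement is computational, and the whole point of the construction of $\hw$ is that its Szeg\H{o} function is explicitly tractable. The only delicate point is consistency of the square-root branches, handled exactly as in Proposition~\ref{F^2/w} by observing that $(z+1)^{1/2}/\phi(z)^{1/2}$ is analytic on $\C \setminus [-1,1]$ under the principal-branch conventions.
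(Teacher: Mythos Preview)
Your proof is correct. The overall strategy matches the paper's---factor out the Jacobi part $(z+1)^{1/2}/\phi(z)^{1/2}$ using multiplicativity of Szeg\H{o} functions, then evaluate the remaining integral involving $\log(\hw(s)/(1+s)) = d_0 s$---but the execution differs in two ways worth noting. First, the paper establishes \eqref{hat F^2/w at -1} by recycling the contour-integral representation \eqref{ResiCalc} from Proposition~\ref{F^2/w} and then deforming $\gamma$ to infinity to evaluate $\frac{1}{2\pi\I}\int_\gamma \frac{\zeta}{(\zeta^2-1)^{1/2}}\frac{{\rm d}\zeta}{\zeta-z} = 1$, whereas you bypass the contour representation entirely and evaluate the real-line Szeg\H{o} integral directly via the classical identity $\int_0^\pi {\rm d}\theta/(z-\cos\theta) = \pi/(z^2-1)^{1/2}$. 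Second, the paper obtains \eqref{hat F^2/w at +1} by citing \cite[Lemma~6.4]{KMVV}, while you read it off as an immediate corollary of the exact formula \eqref{hat F^2/w at -1} using \eqref{phiEndpoints}. Your route is slightly more self-contained and elementary for this particular weight; the paper's route has the virtue of making transparent why $\hw$ is special relative to $w$ (the contour can be pushed off to infinity because the integrand is now entire in $\zeta$ apart from the pole and the cut).
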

\begin{proof}
Statement \eqref{hat F^2/w at +1} follows directly from \cite[Lemma~6.4]{KMVV}, while statement \eqref{hat F^2/w at -1} can be obtained with the same line of reasoning as in Proposition~\ref{F^2/w}, but now the integral simplifies:
\begin{align*}
 \frac{1}{2\pi\I} \int_\gamma\frac{\log(\hw(\zeta)/(1+\zeta))}{\big(\zeta^2-1\big)^{1/2}} \frac{{\rm d}\zeta}{\zeta-z} = \frac{d_0}{2\pi\I} \int_\gamma\frac{\zeta}{\big(\zeta^2-1\big)^{1/2}} \frac{{\rm d}\zeta}{\zeta-z}.
\end{align*}
In particular, we can deform the contour $\gamma$ to infinity leading to $\int_\gamma\frac{\zeta}{(\zeta^2-1)^{1/2}} \frac{{\rm d}\zeta}{\zeta-z} = 2\pi\I$ and finishing the proof.
\end{proof}

From now, on estimates of the type \eqref{hat F^2/w at +1} are always understood to be uniform for any path taken in the appropriate domain. The following two corollaries contain important estimates related to the behaviour of the functions $\Fw$ and $\hFw$ near the critical points $\pm 1$.
\begin{Corollary}[{\cite[Proposition~2.4]{CD}}]\label{CorF+1} For $x \to 1$ such that $x > 1$, we have
\begin{align}\label{Diff F^2/w at +1}
\frac{F^2}{w_+}(x)+\frac{F^2}{w_-}(x) - 2 = - \frac{3\pi^2}{\log^2 \frac{2}{x-1}} + O\left(\frac{1}{\log^3(x-1)}\right).
\end{align}
\end{Corollary}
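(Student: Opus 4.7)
The plan is to substitute the asymptotic expansion \eqref{F^2/w at +1} into both boundary values $(F^2/w)_\pm(x)$ and simplify algebraically. Since $F$ is analytic across $(1,\infty)$ (its only branch cut is $[-1,1]$), while $w(z) = \log 2 - \log(1-z)$ carries a logarithmic branch cut there, only the $w$-factor contributes distinct boundary values. Evaluating $\log(1-z)$ as $z \to x > 1$ from $\C_\pm$ yields $w_\pm(x) = \log(2/(x-1)) \pm \I\pi$.

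Applying \eqref{F^2/w at +1} on each side (the sign $\mp$ becomes $-$ on the $+$ side and $+$ on the $-$ side), adding the two expressions, and subtracting $2$ produces
\begin{align*}
 \frac{F^2}{w_+}(x)+\frac{F^2}{w_-}(x) - 2 = -\I\pi\!\left(\frac{1}{w_+(x)}-\frac{1}{w_-(x)}\right) - \frac{\pi^2}{2}\!\left(\frac{1}{w_+^2(x)}+\frac{1}{w_-^2(x)}\right) + O\!\left(\frac{1}{\log^3(x-1)}\right).
\end{align*}
Setting $L := \log(2/(x-1))$ so that $w_\pm(x) = L \pm \I\pi$, a direct computation gives
\begin{align*}
 \frac{1}{w_+(x)} - \frac{1}{w_-(x)} = \frac{-2\I\pi}{L^2+\pi^2}, \qquad \frac{1}{w_+^2(x)} + \frac{1}{w_-^2(x)} = \frac{2(L^2-\pi^2)}{(L^2+\pi^2)^2},
\end{align*}
and collecting terms simplifies the main contribution to
\begin{align*}
 -\frac{\pi^2(3L^2 + \pi^2)}{(L^2+\pi^2)^2} = -\frac{3\pi^2}{L^2} + O\bigl(L^{-4}\bigr) \qquad \text{as } L \to \infty,
\end{align*}
which is precisely $-3\pi^2/\log^2(2/(x-1))$; the $O(L^{-4})$ remainder is absorbed into the $O(1/\log^3(x-1))$ error already present.

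There is no substantive obstacle here: the corollary is essentially an algebraic identity for $\pm\I\pi$-translates of $L$ combined with Proposition~\ref{F^2/w}. The only point requiring slight care is fixing the correct branch of $\log(1-z)$ off $[1,\infty)$, so that the $\mp\I\pi/w$ term of \eqref{F^2/w at +1} on the upper side combines coherently with the $+\I\pi/w$ term on the lower side; once the sign conventions are aligned, the rest is rational simplification.
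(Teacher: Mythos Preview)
Your derivation is correct and is exactly the argument one would give: the paper itself offers no proof here, merely citing \cite[Proposition~2.4]{CD}, and your computation---substituting $w_\pm(x)=\log\tfrac{2}{x-1}\pm\I\pi$ into \eqref{F^2/w at +1} and simplifying---is precisely how the result follows as a corollary of that proposition.
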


\begin{Corollary}\label{CorF-1}
 For $z \to -1$, we have
\begin{align}\label{3/2decay}
 \Fw(z)-\hFw(z) = O\big(|z+1|^{3/2}\big).
\end{align}
\end{Corollary}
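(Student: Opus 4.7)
The proof is essentially a direct computation from the explicit formulas \eqref{F^2/w at -1} and \eqref{hat F^2/w at -1}, so I will lay out the few arithmetic steps and flag the only conceptual point.

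\textbf{Strategy.} Both $\Fw(z)$ and $\hFw(z)$ are given by expressions of the form $\phi(z)^{-1}$ times an exponential, and crucially the leading constant $d_0$ inside the exponential is the \emph{same} for both (this is the purpose of the specific choice of the model weight $\hw(x) = (1+x)\E^{d_0 x}$: one checks that $\log(\hw(\zeta)/(1+\zeta)) = d_0\zeta$, so by a deformation of $\gamma$ to infinity together with the residue computation performed for $\hFw$, the would-be coefficient of $(z+1)^0$ in the Taylor expansion for $\hFw$ is exactly $d_0$, matching that of $\Fw$). Once this alignment is noted, the difference collapses onto the error term $O(|z+1|)$ appearing in \eqref{F^2/w at -1}.

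\textbf{Computation.} Factor out the common piece and write
\begin{align*}
 \Fw(z) - \hFw(z)
 = \phi(z)^{-1}\,\E^{-(z^2-1)^{1/2} d_0}\,\Bigl[\exp\bigl\{-(z^2-1)^{1/2}\,O(|z+1|)\bigr\}-1\Bigr].
\end{align*}
As $z \to -1$, the prefactor $\phi(z)^{-1}\E^{-(z^2-1)^{1/2}d_0}$ is bounded (indeed tends to $-1$, using $\phi(-1)=-1$ and $(z^2-1)^{1/2}\to 0$). For the bracket, one uses $(z^2-1)^{1/2} = (z-1)^{1/2}(z+1)^{1/2} = O(|z+1|^{1/2})$, so the exponent is
\begin{align*}
 -(z^2-1)^{1/2}\,O(|z+1|) = O\bigl(|z+1|^{3/2}\bigr),
\end{align*}
and the elementary bound $\E^x - 1 = O(x)$ for $x \to 0$ gives that the bracket is $O(|z+1|^{3/2})$. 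Combining the two, the claimed estimate follows.

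\textbf{Main obstacle.} There is no real obstacle: once Proposition~\ref{F^2/w} and its analog for $\hFw$ are in hand, the corollary is immediate. The only subtlety worth underscoring is conceptual rather than technical, namely that $d_0$ has been deliberately engineered to be the same constant in both formulas. Without this matching, one would only gain the factor $(z^2-1)^{1/2} = O(|z+1|^{1/2})$ from differentiating the exponential, recovering merely $O(|z+1|^{1/2})$ decay; it is precisely the cancellation of the $d_0$-terms that promotes the difference to $O(|z+1|^{3/2})$, which is the extra decay referenced in the discussion preceding Section~\ref{AuxiliaryFunctions}.
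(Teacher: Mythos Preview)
Your proof is correct and follows exactly the approach the paper intends: the paper's own proof consists of the single sentence ``The statement follows from \eqref{F^2/w at -1} and \eqref{hat F^2/w at -1},'' and you have simply spelled out the arithmetic behind that sentence. Your remark that the matching of $d_0$ is the key conceptual point is also precisely what the paper emphasizes (see the discussion around \eqref{3/2decay} and the remark following Corollary~\ref{CorF-1}).
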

\begin{proof}
The statement follows from \eqref{F^2/w at -1} and \eqref{hat F^2/w at -1}.
\end{proof}

\begin{Remark}
As noted earlier, the estimate \eqref{3/2decay} is the motivation for considering the particular weight function $\hw(x)$ instead of the simpler Jacobi weight function $1+x$.
\end{Remark}

For later analysis, we will also need the following technical result.

\begin{Proposition} \label{Proprr}
Fix $R > 0$. Let $r_n, \tilde r_n \in (0, 1)$, $n \in \N$ be two sequences satisfying $r_n, \tilde r_n \to 0$, such that $n\big|\frac{r_n}{\tilde r_n}-1|< R$. Then
\begin{align*}\nonumber
 &\frac{F^2}{w_+}(1+r_n)-\frac{F^2}{w_+}(1+\tilde r_n)+\frac{F^2}{w_-}(1+r_n)-\frac{F^2}{w_-}(1+\tilde r_n)
 \\
 &\qquad= O(r_n \log |\log r_n |) + O\left(\frac{1}{n\log^3 r_n}\right) + O\big(n^{-2}\big),
\end{align*}
where the implied constants in the $O$-terms depend only on $R$.
\end{Proposition}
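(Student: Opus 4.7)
The plan is to first reduce the sum to a single quantity involving $F^2(1+r)$ by exploiting the explicit form $w_\pm(1+r) = \log(2/r) \pm \I\pi$, and then to expand $F^2(1+r)$ carefully as a function of $L := \log(2/r)$ plus a remainder that is algebraically small in $r$.

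For $r > 0$ small, one has $w_\pm(1+r) = L \pm \I\pi$, so that
\begin{align*}
\frac{F^2}{w_+}(1+r) + \frac{F^2}{w_-}(1+r) = F^2(1+r)\,\frac{2L}{L^2 + \pi^2}.
\end{align*}
Hence the proposition reduces to analyzing $F^2(1+r)$ as $r \to 0^+$ and controlling its dependence on $r$. Building on Proposition \ref{F^2/w} and Corollary \ref{CorF+1}, I plan to refine the asymptotic expansion of $F^2(1+r)$ into a decomposition
\begin{align*}
F^2(1+r) = H(L) + r\,\sigma(r),
\end{align*}
where $H(L)$ depends on $r$ only through $L$ and satisfies $H(L) = L - \pi^2/(2L) + O(L^{-2})$, while $\sigma(r) = O(\log|\log r|)$ is a remainder arising from the portion of the contour used in the proof of Proposition \ref{F^2/w} near the logarithmic singularity of $w$ at $\zeta = 1$.

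Setting $\Phi(L) := 2L\,H(L)/(L^2+\pi^2)$, a direct calculation from the expansion of $H$ gives $\Phi(L) = 2 - 3\pi^2/L^2 + O(L^{-3})$, so $\Phi'(L) = O(L^{-3})$ and $\Phi''(L) = O(L^{-4})$ as $L \to \infty$. The hypothesis $n|r_n/\tilde r_n - 1| < R$ implies $|L_n - \tilde L_n| = |\log(\tilde r_n/r_n)| \le R/n + O(n^{-2})$, and Taylor's theorem to second order then yields
\begin{align*}
\Phi(L_n) - \Phi(\tilde L_n) = \Phi'(\tilde L_n)(L_n - \tilde L_n) + \tfrac{1}{2}\Phi''(\xi)(L_n - \tilde L_n)^2 = O\Big(\frac{1}{n\log^3 r_n}\Big) + O(n^{-2}).
\end{align*}
The contribution from the remainder $r\,\sigma(r)$ is
\begin{align*}
\frac{2L_n r_n\,\sigma(r_n)}{L_n^2+\pi^2} - \frac{2\tilde L_n \tilde r_n\,\sigma(\tilde r_n)}{\tilde L_n^2 + \pi^2},
\end{align*}
which is bounded by $O(r_n \log|\log r_n|)$, using that $r_n/\tilde r_n$ and $L_n/\tilde L_n$ are bounded and bounded away from $0$ for $n$ large, together with $\sigma(r) = O(\log|\log r|)$. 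Summing the three contributions gives the claimed bound.

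The main obstacle is establishing the decomposition $F^2(1+r) = H(L) + r\,\sigma(r)$ with $\sigma(r) = O(\log|\log r|)$. This requires a refined analysis of the Szeg\H{o} integral \eqref{SzegoF}, or equivalently of the contour integral appearing in the proof of Proposition \ref{F^2/w}, with particular attention paid to the portion of the integration contour near $\zeta = 1$: there, the iterated-logarithmic behaviour of $\log w(\zeta)$ obstructs standard contour-deformation arguments and is precisely what produces the $\log|\log r|$ factor. Once this decomposition is established, the rest of the proof reduces to the asymptotic calculus sketched above.
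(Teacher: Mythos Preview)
Your route is genuinely different from the paper's. The paper never isolates $F^2(1+r)$ as a function of $L$; instead it works with the \emph{difference} $\log F(1+r_n)-\log F(1+\tilde r_n)$ directly. Starting from the Szeg\H{o} integral it writes $\log F(1+r) = \sqrt{2r}\,I(r)+O(r\log|\log r|)$, then after a change of variables $t\to r t$ in $I(r)$ reduces the difference of the two $I$-integrals to a single integral of $\log w(1-r_nt)-\log w(1-\tilde r_n t)$ against $dt/(t^{3/2}+t^{1/2})$. This integrand is expanded using $\frac{w(1-r_nt)}{w(1-\tilde r_nt)}=1+\frac{a}{n\log(2/\tilde r_n t)}+O(n^{-2})$ with $a=n(r_n/\tilde r_n-1)$, yielding $\log\frac{F^2(1+r_n)}{F^2(1+\tilde r_n)}=\frac{a}{n\log(2/\tilde r_n)}+\Theta(r_n,n)$. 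The paper then writes $\frac{F^2}{w_\pm}(1+r_n)-\frac{F^2}{w_\pm}(1+\tilde r_n)$ in terms of this ratio and the explicit $w_\pm$, and observes that the leading terms $\pm\frac{\pi\I a}{n\log^2(2/\tilde r_n)}$ cancel upon summing. Your reorganisation via $F^2\cdot\frac{2L}{L^2+\pi^2}$ followed by a Taylor expansion in $L$ is more conceptual, and if it can be carried out it explains the cancellation a priori rather than discovering it at the end.

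Two cautions. First, the decomposition $F^2(1+r)=H(L)+r\,\sigma(r)$ with a \emph{genuine} function $H$ of $L$ alone and $\sigma=O(\log|\log r|)$ is the entire content of the proposition, and proving it amounts to the same refined Szeg\H{o}-integral analysis the paper carries out for the difference; you will not be able to extract it as a consequence of the three-term expansion in Proposition~\ref{F^2/w}. In effect you are trading one hard computation for another of comparable size. Second, the step ``$\Phi(L)=2-3\pi^2/L^2+O(L^{-3})$, so $\Phi'(L)=O(L^{-3})$'' is not valid from a pointwise asymptotic alone: you must either construct $H$ explicitly enough to differentiate it (and control the derivative of the $O(L^{-3})$ remainder), or bound $\Phi(L_n)-\Phi(\tilde L_n)$ directly without passing through $\Phi'$. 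The paper sidesteps both issues by never separating $F^2$ into ``$L$-part plus remainder'' and instead estimating every difference from the integral representation.
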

\begin{proof}
For the proof see Proposition~\ref{ProprrAp}.
\end{proof}
\begin{Remark}
The original formulation found in \cite[Proposition~A.4]{CD} assumes that $r_n, \tilde r_n = \smash{O\big(\frac{1}{n^2}\big)}$, but does not take into account the fact that the error term will additionally depend on the convergence rate of the sequences $r_n$, $\tilde r_n$, i.e., on the bound $C_{r, \tilde r} > 0$ such that $r_n, \tilde r_n < \frac{C_{r,\tilde r}}{n^2}$. This fact however becomes crucial in the proof of Proposition~\ref{QQ} \cite[Proposition~C.4]{CD}. To capture the dependence of the error term on the precise decay rate of $r_n$, $\tilde r_n$, we have chosen to express the error in terms of $r_n$ and $n$, though one could have used $C_{r,\tilde r}$ and $n$ instead. Luckily, the gap in the original formulation can be easily filled in as shown in Proposition~\ref{ProprrAp}.
\end{Remark}

\section{The Riemann--Hilbert problem for orthogonal polynomials}\label{RHProblems}

In the following section, we recall the celebrated Fokas--Its--Kitaev characterization of orthogonal polynomials via RH problems \cite{FIK}. We will state the problem explicitly in the case where the weight function is $w(x)$, $x\in[-1,1]$, but similar characterizations hold for the other weight functions $\hw$ and $\tw$.

\subsection{Fokas--Its--Kitaev RH problem for the logarithmic weight}
Find a $2\times2$ matrix-valued function $Y = Y^{(n)} \colon \C \setminus [-1,1] \to \C^{2\times2}$ satisfying the following properties:
\begin{enumerate}[(i)]\itemsep=0pt
 \item $Y(z)$ is analytic for $z \in \C \setminus [-1,1]$,
 \item $Y$ satisfies the jump condition
 \begin{align*}
 Y_+(s) = Y_-(s) \begin{pmatrix}
 1 & w(s)
 \\
 0 & 1
 \end{pmatrix}, \qquad s\in [-1,1],
 \end{align*}
\item $Y(z)\begin{pmatrix}
z^{-n} & 0
\\
0 & z^n
\end{pmatrix} = \Id + O\big(z^{-1}\big)$, as $z \rightarrow \infty$,
\item $Y$ is bounded away from the points $\pm 1$, and has the following behaviours near the points~$\pm1$:
\begin{align*}
 Y(z) = \begin{pmatrix}
 O(1) & O\big(\log^2|z-1|\big)
 \\
 O(1) & O\big(\log^2|z-1|\big)
 \end{pmatrix}, \qquad z\to +1,
\end{align*}
and
\begin{align*}
 Y(z) = \begin{pmatrix}
 O(1) & O(1)
 \\
 O(1) & O(1)
 \end{pmatrix}, \qquad z \to -1.
\end{align*}
\end{enumerate}
The condition (iv) for $z \to +1$ has been shown in \cite[Section~3.1]{CD} for the case of the weight function having a logarithmic singularity, while the behaviour for $z \to -1$ is a special case of the algebraic-type singularity of the weight function treated in \cite[Section~2]{KMVV}.

To understand the connection between the RH problem for $Y$ and orthogonal polynomials with respect to the orthogonality measure $w(x){\rm d}x$ on $[-1,1]$, we need to introduce the Cauchy operator $\mathcal C_{[-1,1]}$ on the interval $[-1,1]$:
\begin{align*}
 \mathcal C_{[-1,1]}\colon\ L^2([-1,1])&\to \mathcal O(\C \setminus [-1,1]),
 \\
 f(s) &\mapsto \mathcal C_{[-1,1]}(f)(z) = \frac{1}{2\pi\I} \int_{-1}^1 \frac{f(s)}{s-z} {\rm d}s.
\end{align*}
Here $\mathcal O(\C \setminus [-1,1])$ denotes the space of functions holomorphic in $\C \setminus [-1,1]$. When $\mathcal C_{[-1,1]}$ is applied to matrix-valued functions, it is understood to act componentwise. Cauchy operators can also be defined on different contours, as in Section~\ref{Section_LegendreResolvent}.

We can now state the seminal result by Fokas, Its and Kitaev which characterizes orthogonal polynomials via a RH problem:
\begin{Theorem}[\cite{FIK}]
The RH problem for $Y$ is solved uniquely by
\begin{align*}
Y(z) =
\begin{pmatrix}
\pi_n(z) & \mathcal{C}_{[-1,1]}(\pi_n w)(z)
\\
-2\pi\I\gamma^2_{n-1} \pi_{n-1}(z) & -2\pi\I\gamma^2_{n-1} \mathcal{C}_{[-1,1]}(\pi_{n-1} w)(z)
\end{pmatrix},
\end{align*}
where $\pi_n(z)$ is the $n$-th monic orthogonal polynomial with respect to the weight function $w(x)$ and $\gamma_n > 0$ is the leading coefficients of the orthonormal polynomial $p_n$, meaning $p_n = \gamma_n \pi_n$.
\end{Theorem}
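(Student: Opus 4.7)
\medskip

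\noindent\textbf{Proof proposal.} The plan is the standard two-step argument: first verify that the proposed matrix $Y$ satisfies conditions (i)--(iv), then deduce uniqueness by a Liouville-type argument from the endpoint growth bounds in (iv). Analyticity in $\C\setminus[-1,1]$ is immediate because $\pi_n, \pi_{n-1}$ are entire and Cauchy transforms of $L^1$-functions supported on $[-1,1]$ are analytic off the support. The jump condition (ii) follows directly from the Plemelj--Sokhotski formula: for $s\in(-1,1)$,
\begin{align*}
 \mathcal{C}_{[-1,1]}(\pi_n w)_+(s) - \mathcal{C}_{[-1,1]}(\pi_n w)_-(s) = \pi_n(s) w(s),
\end{align*}
which translates exactly into $Y_+ = Y_-\bigl(\begin{smallmatrix} 1 & w\\ 0 & 1\end{smallmatrix}\bigr)$ when combined with the analyticity of $\pi_n$.

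For the asymptotic normalization (iii), I would expand the Cauchy transform in powers of $1/z$ for large $z$:
\begin{align*}
 \mathcal{C}_{[-1,1]}(\pi_n w)(z) = -\frac{1}{2\pi\I}\sum_{k=0}^{\infty} z^{-k-1}\int_{-1}^1 s^k \pi_n(s) w(s)\,{\rm d}s.
\end{align*}
The orthogonality relations $\int_{-1}^1 s^k \pi_n(s) w(s)\,{\rm d}s = 0$ for $k=0,\dots,n-1$ kill the first $n$ terms, so $\mathcal{C}_{[-1,1]}(\pi_n w)(z) = O(z^{-n-1})$, and multiplying by $z^n$ gives $O(z^{-1})$ as required. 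The same reasoning applied to $\pi_{n-1}$ yields the leading term $-\frac{1}{2\pi\I\gamma_{n-1}^2} z^{-n}\cdot(-2\pi\I\gamma_{n-1}^2) = z^{-n}$ off the diagonal, matching the identity normalization after multiplying by $z^n$ in the bottom row and $z^{-n}$ in the top row.

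For (iv), near $z=-1$ the product $\pi_n(s)w(s)$ is bounded since $w$ has a simple zero there, so the Cauchy transform is $O(1)$. Near $z=+1$ the singularity is logarithmic: the dominant contribution to $\mathcal{C}_{[-1,1]}(\pi_n w)(z)$ comes from $s$ close to $1$, and substituting $u=1-s$, $\epsilon = 1-z$ in $\int \log(2/(1-s))/(s-z)\,{\rm d}s$ produces a $\log^2\epsilon$ term, which accounts for the $O(\log^2|z-1|)$ entries in the second column. Finally, for uniqueness I would note that $\det Y$ has no jump (since the jump matrix has determinant $1$), has only removable singularities at $\pm 1$ by the growth bounds in (iv), and tends to $1$ at infinity, hence $\det Y\equiv 1$. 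Given two solutions $Y_1, Y_2$, the product $Y_1 Y_2^{-1}$ is then analytic on $\C\setminus[-1,1]$, has matching boundary values across $(-1,1)$ (so it extends analytically across the interval), has only removable singularities at $\pm 1$ (a function that is $O(\log^2|z-1|)$ and integrable at an isolated point is in fact bounded there, hence the singularity is removable), and tends to the identity at infinity, so by Liouville $Y_1 = Y_2$.

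The main obstacle I anticipate is the endpoint condition near $+1$: one must verify carefully that $\log^2$ is indeed the sharp growth of the Cauchy transform of $\pi_n w$ (and not $\log^3$ or worse arising from iterated integrations), which requires a controlled expansion of the integral near the singularity, as referenced to \cite[Section~3.1]{CD}. The removability step in the uniqueness argument is the other delicate point, since one must argue that an $O(\log^2)$ singularity at an isolated point of an otherwise analytic function is removable; this follows from Riemann's removable singularity theorem once one confirms that $\log^2$-growth is $o(|z-z_0|^{-\epsilon})$ for every $\epsilon>0$, placing the singularity well within the removable regime.
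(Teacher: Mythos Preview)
The paper does not actually prove this theorem; it is stated as a classical result with a citation to \cite{FIK} (the standard textbook treatment is in \cite{PD}). Your proposal is correct and is precisely the classical argument: verify (i)--(iv) directly from the Plemelj--Sokhotski formula and the orthogonality relations, then obtain uniqueness by showing $\det Y\equiv 1$ via Liouville and applying Liouville again to $Y_1 Y_2^{-1}$, using that polylogarithmic growth at an isolated point is removable.
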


The fact that the Fokas--Its--Kitaev RH problem characterizes the corresponding orthogonal polynomials has lead to numerous new results, in particular in the case where the associated weight function satisfies local analyticity properties (see, e.g.,~\cite{BI,PD,DKMVZ1,DKMVZ2,KMVV}), but also in the case of nonanalytic weights (see \cite{BY10,KMcL99,McM06,McM08,MY23}). Instrumental in the derivation of those results has been the nonlinear steepest descent method, first presented in \cite{DZ93} to study the long-time asymptotics of the mKdV equation and later generalized to the Fokas--Its--Kitaev RH problem in \cite{BI,DKMVZ1,DKMVZ2}. It turns out that the recurrence coefficients can also be simply expressed in term of~$Y$ (see \cite{PD}):
\begin{Proposition}\label{Yab}
 Let $Y_1^{(n)} \in \C^{2\times2}$ be given through the expansion
 \begin{align*}
 Y^{(n)}(z)z^{-n\sigma_3} = \Id + \frac{Y_1^{(n)}}{z} + O\left(\frac{1}{z^2}\right) \qquad \text{as} \quad z\to\infty.
 \end{align*}
Then the recurrence coefficients $a_n$ and $b_{n-1}$ can be extracted from $Y_1^{(n)}$ via the formulas
\begin{align*}
 a_n = \big(Y_1^{(n)}\big)_{11}-\big(Y_1^{(n+1)}\big)_{11}
\qquad \text{and}\qquad b_{n-1}^2 = \big(Y_1^{(n)}\big)_{12} \big(Y_1^{(n)}\big)_{21}.
\end{align*}
\end{Proposition}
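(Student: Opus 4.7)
The plan is to work directly from the explicit Fokas--Its--Kitaev formula for $Y^{(n)}$ and extract the $1/z$ coefficient of $Y^{(n)}(z) z^{-n\sigma_3}$ entry by entry. For the diagonal, I would expand the monic orthogonal polynomial $\pi_n(z) = z^n + c_n^{(1)} z^{n-1} + O(z^{n-2})$, so that $\pi_n(z) z^{-n} = 1 + c_n^{(1)}/z + O(z^{-2})$, which identifies $\big(Y_1^{(n)}\big)_{11} = c_n^{(1)}$. The key input is then the monic three-term recurrence
\begin{equation*}
 x \pi_n(x) = \pi_{n+1}(x) + a_n \pi_n(x) + b_{n-1}^2 \pi_{n-1}(x),
\end{equation*}
obtained from $x p_n = b_n p_{n+1} + a_n p_n + b_{n-1} p_{n-1}$ together with the identification $b_n = \gamma_n/\gamma_{n+1}$ (a consequence of matching leading coefficients). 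Comparing the coefficients of $z^n$ on both sides yields $c_n^{(1)} = c_{n+1}^{(1)} + a_n$, i.e., the first claimed formula.

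For the off-diagonal entries I would expand the Cauchy transform as a geometric series for $|z|$ large,
\begin{equation*}
 \mathcal{C}_{[-1,1]}(f)(z) = -\frac{1}{2\pi\I} \sum_{k=0}^{\infty} \frac{1}{z^{k+1}} \int_{-1}^1 s^k f(s)\, {\rm d}s,
\end{equation*}
and apply this to $f = \pi_n w$. The orthogonality relation $\int_{-1}^1 s^k \pi_n(s) w(s)\, {\rm d}s = 0$ for $k = 0, \dots, n-1$ kills the first $n$ terms of the expansion, so that the leading contribution to $\mathcal{C}_{[-1,1]}(\pi_n w)(z)\, z^n$ is
\begin{equation*}
 -\frac{1}{2\pi\I z}\int_{-1}^1 \pi_n(s)^2 w(s)\, {\rm d}s = -\frac{1}{2\pi\I \gamma_n^2}\cdot \frac{1}{z},
\end{equation*}
where I use that $p_n = \gamma_n \pi_n$ is orthonormal. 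Therefore $\big(Y_1^{(n)}\big)_{12} = -\frac{1}{2\pi\I \gamma_n^2}$. The (2,1) entry is simpler: since $\pi_{n-1}(z)z^{-n} = z^{-1} + O(z^{-2})$, one reads off $\big(Y_1^{(n)}\big)_{21} = -2\pi\I\, \gamma_{n-1}^2$.

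Multiplying the two off-diagonal entries gives
\begin{equation*}
 \big(Y_1^{(n)}\big)_{12}\big(Y_1^{(n)}\big)_{21} = \frac{\gamma_{n-1}^2}{\gamma_n^2} = b_{n-1}^2,
\end{equation*}
using $b_{n-1} = \gamma_{n-1}/\gamma_n$ once more, which completes the argument. There is no real obstacle here; the only point that requires a little care is the correct translation between the orthonormal recurrence coefficients and the coefficients of the monic recurrence, which enters both in the identification $b_n = \gamma_n/\gamma_{n+1}$ and in extracting $a_n$ from consecutive subleading coefficients of $\pi_n$ and $\pi_{n+1}$.
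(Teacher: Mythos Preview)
Your argument is correct and is exactly the standard derivation; the paper does not prove this proposition but simply cites \cite{PD}, where the same entry-by-entry expansion of the Fokas--Its--Kitaev solution and the monic three-term recurrence are used.
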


The nonlinear steepest descent analysis for weight functions supported on a single finite interval has been performed in \cite{KMVV}, and in the following we shall repeat the conjugation and deformation steps found therein. In the first step one normalizes the RH problem at infinity through an appropriate conjugation, viz.,
\begin{align} \label{TY}
 T(z) := \left(\frac{2}{F_\infty}\right)^{n\sigma_3} Y(z) \left(\frac{F(z)}{\phi^n(z)}\right)^{\sigma_3}, \qquad z \in \C \setminus [-1,1].
\end{align}
Then $T$ turns out to be the unique solution of the following RH problem.

\subsection{Normalized Fokas--Its--Kitaev RH problem for the logarithmic weight}
Find a $2\times2$ matrix-valued function $T = T^{(n)} \colon \C \setminus [-1,1] \to \C^{2\times2}$ satisfying the following properties:
\begin{enumerate}[(i)]\itemsep=0pt
 \item $T(z)$ is analytic for $z \in \C \setminus [-1,1]$,
 \item $T$ satisfies the jump condition
 \begin{align}\label{Tjump}
 T_+(s) = T_-(s) \begin{pmatrix}
 \frac{F_+^2}{w}(s)\phi_+^{-2n}(s) & 1
 \\
 0 & \frac{F_-^2}{w}(s)\phi_-^{-2n}(s)
 \end{pmatrix}, \qquad s\in [-1,1],
 \end{align}
\item $T(z) = \Id + O\big(z^{-1}\big)$, as $z \rightarrow \infty$.
\item $T$ is bounded away from the points $\pm 1$, and has the following behaviours near the points~$\pm1$:
\begin{align*}
 T(z) = \begin{pmatrix}
 O\big(\log^{1/2}|z-1|\big) & O\big(\log^{3/2}|z-1|\big)
 \\
 O\big(\log^{1/2}|z-1|\big) & O\big(\log^{3/2}|z-1|\big)
 \end{pmatrix}, \qquad z\to +1
\end{align*}
and
\begin{align}\label{Tjump-1}
 T(z) = \begin{pmatrix}
 O\big(|z+1|^{1/2}\big) & O\big(|z+1|^{-1/2}\big)
 \\
 O\big(|z+1|^{1/2}\big) & O\big(|z+1|^{-1/2}\big)
 \end{pmatrix}, \qquad z \to -1.
\end{align}
\end{enumerate}
Note that we follow the convention found in \cite{CD} where the matrix $T$ is conjugated by the Szeg\H{o} function $F$, as in \eqref{TY}. Hence, the matrix $T$ found in \cite[equation~(3.1)]{KMVV} differs from ours in that respect. In our case the inclusion of the Szeg\H{o} function $F$ in the jump matrices \eqref{Tjump} will play a~crucial role in regularizing the jump matrices of the RH problems and thus enabling us to make the comparison argument in Section \ref{Comparison} effective.

However, as the weight function $w_k$ from \eqref{wk} is nonvanishing at $z = -1$, the matrix $T$ in~\eqref{TY} also differs crucially in its behaviour as $z\to-1$ from the one found in \cite[equation~(3.7)]{CD}. The reason is that our logarithmic weight function $w$ has a simple zero at $z=-1$, implying by item (iv) in Proposition~\ref{PropertiesF} that $|F(z)| = O\big(|z+1|^{1/2}\big)$ as $z \to -1$. This induces the $O\big(|z+1|^{-1/2}\big)$-behaviour in $T$ as $z\to-1$. Crucially, the entries of $T_\pm$ (and later $Q_\pm$) will not be square integrable, meaning that the $L^2$-theory used in \cite{CD} will not be applicable directly. We will circumvent this difficulty by defining certain ${}^\star$RH problems in Section~\ref{LocPar} through inverting locally by appropriate local parametrix solutions near the endpoint $z = -1$.

In the second step, we will use the following factorization of the jump matrix \eqref{Tjump}:
\begin{align*}
 \begin{pmatrix}
 \frac{F_+^2}{w}(s)\phi_+^{-2n}(s) & 1
 \\
 0 & \frac{F_-^2}{w}(s)\phi_-^{-2n}(s)
 \end{pmatrix} = \begin{pmatrix}
 1 & 0
 \\
 \frac{F_-^2}{w}(s)\phi_-^{-2n}(s) & 1
 \end{pmatrix}&
 \begin{pmatrix}
 0 & 1
 \\
 -1 & 0
 \end{pmatrix}
 \begin{pmatrix}
 1 & 0
 \\
 \frac{F_+^2}{w}(s)\phi_+^{-2n}(s) & 1
 \end{pmatrix}.
\end{align*}
Next, we introduce the oriented \emph{lens} jump contour $\Sigma = \Sigma_1 \cup \Sigma_2 \cup (-1,1+\delta)$, see Figure~\ref{lensJumpContour}, where~$\delta > 0$ is fixed and $n$-independent.

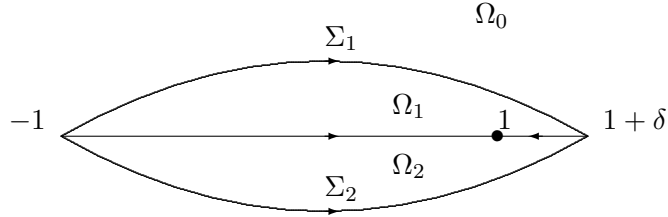
\begin{figure}[th]\centering
\vspace*{-8mm}
\begin{picture}(7,5.2)
\put(0,2.5){\line(1,0){7.0}}
\put(3.5,2.5){\vector(1,0){0.2}}

\put(3.5,3.5){\vector(1,0){0.2}}
\put(3.5,1.5){\vector(1,0){0.2}}
\put(6.4,2.5){\vector(-1,0){0.2}}

\curve(0,2.5, 3.5,1.5, 7,2.5)
\curve(0,2.5, 3.5,3.5, 7,2.5)

\put(3.5, 3.7){$\Sigma_1$}
\put(3.5, 1.7){$\Sigma_2$}

\put(5.5, 4){$\Omega_0$}

\put(4.4, 2.8){$\Omega_1$}

\put(4.4, 2){$\Omega_2$}

\put(7.2, 2.6){$1+\delta$}
\put(-0.7, 2.6){$-1$}
\put(5.8,2.6){$1$}
\put(5.7,2.41){$\bullet$}
\end{picture}
\vspace{-16mm}
\caption{Lens-shaped jump contour $\Sigma$.}\label{lensJumpContour}
\end{figure}

Note that the matrices
\begin{align*}
 \begin{pmatrix}
 1 & 0
 \\
 \frac{F_+^2}{w}(s)\phi_+^{-2n}(s) & 1
 \end{pmatrix}, \qquad \begin{pmatrix}
 1 & 0
 \\
 \frac{F_-^2}{w}(s)\phi_-^{-2n}(s) & 1
 \end{pmatrix}, \qquad s \in (-1,1)
\end{align*}
can be analytically continued to $z \in \Omega_1$ and $z\in \Omega_2$, respectively. Hence, we can define the following matrix-valued function $\bQ \colon \C \setminus \Sigma \to \C^{2\times2}$:
\begin{align}\label{TQRel}
 \bQ(z) =
\begin{cases}
T(z), & z \in \Omega_0,
\\
T(z)
\begin{pmatrix}
1 & 0
\\
-\frac{F^2}{w}(z) \phi^{-2n}(z) & 1
\end{pmatrix},
& z \in \Omega_1,\vspace{1mm}
\\
T(z)
\begin{pmatrix}
1 & 0
\\
\frac{F^2}{w}(z) \phi^{-2n}(z) & 1
\end{pmatrix}, & z \in \Omega_2.
\end{cases}
\end{align}
Then $\bQ$ will be the solution to the following RH problem.

\subsection{Logarithmic RH problem}\label{logRHP} Find a $2\times2$ matrix-valued function $\bQ \colon \C \setminus \Sigma \to \C^{2\times2}$ satisfying the following properties:
\begin{enumerate}[(i)]\itemsep=0pt
 \item $\bQ(z)$ is analytic for $z \in \C \setminus \Sigma$,
 \item $\bQ$ satisfies the jump condition
 \begin{align*}
 \bQ_+(s) = \bQ_-(s) \bv(s), \qquad s\in \Sigma,
 \end{align*}
 where
 \begin{align} \label{jumpMatrixV}
\bv(s) =
\begin{cases}
\begin{pmatrix}
1 & 0
\\
\frac{F^2}{w}(s) \phi^{-2n}(s) & 1
\end{pmatrix}, & \mbox{for} \ s \in \Sigma_1 \cup \Sigma_2,\vspace{1mm}
\\
\begin{pmatrix}
1 & 0
\\
\big(\frac{F^2}{w_+}(s)+\frac{F^2}{w_-}(s)\big) \phi^{-2n}(s) & 1
\end{pmatrix}, & \mbox{for}\  s\in(1,1+\delta),\vspace{1mm}
\\
\begin{pmatrix}
0 & 1
\\
-1 & 0
\end{pmatrix}, & \mbox{for}\ s \in (-1,1),
\end{cases}
\end{align}
\item $\bQ(z) = \Id + O\big(z^{-1}\big)$, as $z \rightarrow \infty$,
\item $\bQ$ is bounded away from the points $\pm 1$, and has the following behaviours near the points~$\pm1$:
\begin{align*}
 Q(z) = \begin{pmatrix}
 O\big(\log^{3/2}|z-1|\big) & O\big(\log^{3/2}|z-1|\big)
 \\
 O\big(\log^{3/2}|z-1|\big) & O\big(\log^{3/2}|z-1|\big)
 \end{pmatrix}, \qquad z\to +1
\end{align*}
and
\begin{align*}
 Q(z) = \begin{pmatrix}
 O\big(|z+1|^{-1/2}\big) & O\big(|z+1|^{-1/2}\big)
 \\
 O\big(|z+1|^{-1/2}\big) & O\big(|z+1|^{-1/2}\big)
 \end{pmatrix}, \qquad z \to -1.
\end{align*}
\end{enumerate}
Note that $\bQ_\pm \not \in L^2(\Sigma)$ due to its behaviour as $z\to -1$, which is caused by the simple zero of $w$ at that point.

For the asymptotic analysis in Section~\ref{SectAsymptoticAnalysis}, we will need the analog of the logarithmic RH problem stated for the model weight function $\hw$. The derivation from the Fokas--Its--Kitaev formulation remains unchanged except for the use of the functions $\hw$ and $\hF$ instead of $w$ and~$F$. The behaviour near $z \to \pm1$ can be read off from \cite[Section~4]{KMVV}, after taking into account the behaviour of the Szeg\H{o} function $\hF$.

\subsection{Model RH problem}\label{ModRHP} Find a $2\times2$ matrix-valued function $\hQ \colon \C \setminus \Sigma \to \C^{2\times2}$ satisfying the following properties:
\begin{enumerate}[(i)]\itemsep=0pt
 \item $\hQ(z)$ is analytic for $z \in \C \setminus \Sigma$,
 \item $\hQ$ satisfies the jump condition
 \begin{align*}
 \hQ_+(s) = \hQ_-(s) \hv(s), \qquad s\in \Sigma,
 \end{align*}
 where
 \begin{align}\label{jumpMatrixhV}
\hv(s) =
\begin{cases}
\begin{pmatrix}
1 & 0
\\
\frac{\hF^2}{\hw}(s) \phi^{-2n}(s) & 1
\end{pmatrix}, & \mbox{for}\ s \in \Sigma_1 \cup \Sigma_2,\vspace{1mm}
\\
\begin{pmatrix}
1 & 0
\\
2\frac{\hF^2}{\hw}(s) \phi^{-2n}(s) & 1
\end{pmatrix}, & \mbox{for}\ s\in(1,1+\delta),\vspace{1mm}
\\
\begin{pmatrix}
0 & 1
\\
-1 & 0
\end{pmatrix}, &\mbox{for}\ s \in (-1,1),
\end{cases}
\end{align}
\item $\hQ(z) = \Id + O\big(z^{-1}\big)$, as $z \rightarrow \infty$,
\item $\hQ$ is bounded away from the points $\pm 1$, and has the following behaviours near the points~$\pm1$:
\begin{align*}
 \hQ(z) = \begin{pmatrix}
 O(\log|z-1|) & O(\log|z-1|)
 \\
 O(\log|z-1|) & O(\log|z-1|)
 \end{pmatrix}, \qquad z\to +1
\end{align*}
and
\begin{align*}
 \hQ(z) = \begin{pmatrix}
 O\big(|z+1|^{-1/2}\big) & O\big(|z+1|^{-1/2}\big)
 \\
 O\big(|z+1|^{-1/2}\big) & O\big(|z+1|^{-1/2}\big)
 \end{pmatrix}, \qquad z \to -1.
\end{align*}
\end{enumerate}

Note that the jump matrix $\hv$ simplifies compared to $\bv$, as the weight function $\hw$ is continuous (in fact analytic) across $(1, 1+\delta)$. As for $Q$, due to the simple zero of $\hw$ at $z = -1$, $\hQ$ will not be square integrable near that point.

Note that the weight function $\hw$ falls into the class of \emph{modified Jacobi weight functions} considered in \cite{KMVV}. As such, an asymptotic series expansion in powers of $n^{-1}$ for the recurrence coefficients $\widehat a_n$, $\widehat b_n$ can be explicitly computed. Note however that we use a different convention here than in \cite{KMVV}, i.e., the roles of $\widehat a_n$, $\widehat b_n$ are interchanged. We write down the expansion up to the $n^{-2}$-term:
\begin{Corollary}[{\cite[Theorem.~1.10]{KMVV}}]\label{AsymptoticsHatRC}
The recurrence coefficients $\widehat a_n$, $\widehat b_n$ associated to the weight function $\hw$ satisfy
\begin{align*}
 \widehat a_n= \frac{1}{4n^2} + O\left(\frac{1}{n^3}\right),\qquad \widehat b_n = \frac{1}{2}-\frac{1}{16n^2} + O\left(\frac{1}{n^3}\right).
\end{align*}
\end{Corollary}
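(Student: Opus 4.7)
The target statement is Corollary \ref{AsymptoticsHatRC}, which asserts the $1/n^2$-level asymptotics of the recurrence coefficients $\widehat a_n,\widehat b_n$ associated with the model weight $\widehat w(x)=(1+x)\mathrm{e}^{d_0 x}$. Since this is explicitly labeled as a consequence of Theorem~1.10 of \cite{KMVV}, the plan is essentially a verification that $\widehat w$ fits the hypotheses of that theorem, followed by bookkeeping in the conventions.

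First I would check that $\widehat w$ belongs to the class of \emph{modified Jacobi weights} treated in \cite{KMVV}, namely $\widehat w(x) = (1-x)^{\alpha}(1+x)^{\beta} h(x)$ with $h$ strictly positive and analytic in a neighborhood of $[-1,1]$. For $\widehat w(x)=(1+x)\mathrm{e}^{d_0 x}$ this holds with $\alpha=0$, $\beta=1$, and $h(x)=\mathrm{e}^{d_0 x}$, which is indeed entire and strictly positive on $[-1,1]$. This is exactly why the RH analysis of \cite{KMVV}, including the Airy/Bessel local parametrices at $\pm 1$, applies verbatim to $\widehat w$.

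Next I would invoke Theorem~1.10 of \cite{KMVV}, which expresses the recurrence coefficients as an asymptotic expansion in powers of $1/n$ whose coefficients depend explicitly on $\alpha$, $\beta$, and the Szeg\H{o} function associated with $h$. Reading off the first two terms and plugging in $\alpha=0$, $\beta=1$ gives the leading corrections of order $1/n^2$. One then has to reconcile conventions: \cite{KMVV} uses the opposite labeling of $a_n$ and $b_n$ compared with the present paper, as already pointed out in the paragraph introducing the weight and just before the corollary. After swapping, the diagonal coefficient $\widehat a_n$ acquires the value $(\beta^2-\alpha^2)/(4n^2)=1/(4n^2)$ and the off-diagonal coefficient $\widehat b_n$ becomes $\tfrac{1}{2}-(\alpha^2+\beta^2-\tfrac{1}{2})/(8n^2)=\tfrac{1}{2}-1/(16n^2)$, matching the displayed formulas.

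There is no real obstacle here: the only mild care required is in matching the convention of \cite{KMVV} with the one fixed in this paper, and checking that the factor $\mathrm{e}^{d_0 x}$ does not spoil the analyticity of $h$ in a complex neighborhood of $[-1,1]$ (which it clearly does not, since $\mathrm{e}^{d_0 z}$ is entire and zero-free). Consistency can additionally be cross-checked against Magnus' conjectural formulas \eqref{MagA} and \eqref{MagB} with $\alpha=0$, $\beta=1$, which produce exactly the same $1/n^2$ coefficients. The remainder $O(1/n^3)$ is then inherited directly from the next term in the KMVV expansion.
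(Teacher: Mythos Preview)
Your proposal is correct and follows precisely the approach implicit in the paper: the corollary is stated without proof as a direct citation of \cite[Theorem~1.10]{KMVV}, and your verification that $\widehat w(x)=(1+x)\mathrm{e}^{d_0 x}$ is a modified Jacobi weight with $\alpha=0$, $\beta=1$ and analytic positive $h$, together with the convention swap noted in the introduction, is exactly the bookkeeping the paper leaves to the reader.
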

To compute the asymptotics of the recurrence coefficients $a_n$, $b_n$, we will first compute the asymptotics of $a_n -\widehat a_n$, $b_n^2-\widehat b_n^2$ and then use Corollary~\ref{AsymptoticsHatRC}. Hence, we will need an analog of Proposition~\ref{Yab} above for the differences of recurrence coefficients, which we express in terms of~$Q$ and $\hQ$.
\begin{Proposition}[{\cite[Proposition~3.6]{CD}}]\label{PropRecQ}
 Let $Q_1^{(n)}$ and $\hQ_1^{(n)}$ be given through the expansion
 \begin{align*}
 Q^{(n)} = \Id + \frac{Q_1^{(n)}}{z} + O\left(\frac{1}{z^2}\right) \qquad \text{as} \quad z\to\infty,
 \end{align*}
 and
 \begin{align*}
 \hQ^{(n)} = \Id + \frac{\hQ_1^{(n)}}{z} + O\left(\frac{1}{z^2}\right) \qquad \text{as} \quad z\to\infty.
 \end{align*}
Then the differences $a_n - \widehat a_n$ and $b_n^2 -\widehat b_n^2$ can be expressed via
\begin{align}\label{a_n-widehat a_n}
 a_n - \widehat a_n = \big(Q_1^{(n)}\big)_{11}-\big(\hQ_1^{(n)}\big)_{11} - \big(\big(Q_1^{(n+1)}\big)_{11}-\big(\hQ_1^{(n+1)}\big)_{11}\big),
\end{align}
and
\begin{align}\nonumber
 b_{n-1}^2-\widehat b_{n-1}^2 ={}& \big(\big(\bQ_1^{(n)}\big)_{12}-\big(\hQ_1^{(n)}\big)_{12}\big)\big(\big(\bQ_1^{(n)}\big)_{21}-\big(\bQ_1^{(n+1)}\big)_{21}\big)
 \\\label{b_n^2-widehat b_n^2}
 &+ \big( \hQ_1^{(n)}\big)_{12}\big[\big(\big(\bQ_1^{(n)}\big)_{21}-\big(\bQ_1^{(n+1)}\big)_{21}\big)-\big(\big(\hQ_1^{(n)}\big)_{21}-\big(\hQ_1^{(n+1)}\big)_{21}\big)\big].
\end{align}
\end{Proposition}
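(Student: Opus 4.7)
The plan is to pull back from $\bQ$ to $Y$ (and similarly $\hQ$ to $\widehat Y$) and then apply Proposition~\ref{Yab}. Since $\bQ=T$ on the unbounded region $\Omega_0$, inverting \eqref{TY} yields, for $z$ large,
\[
 Y(z) = (F_\infty/2)^{n\sigma_3}\, \bQ(z)\, (\phi(z)^n/F(z))^{\sigma_3},
\]
together with the analogous identity for $\widehat Y$, $\hQ$, $\hF$. Using $\phi(z) = 2z - 1/(2z) + O(z^{-3})$ and $F(z) = F_\infty + F_1/z + O(z^{-2})$ as $z\to\infty$, the normalized scalar prefactor satisfies
\[
 \frac{F_\infty}{F(z)}\Bigl(\frac{\phi(z)}{2z}\Bigr)^n = 1 - \frac{F_1}{F_\infty}\cdot\frac{1}{z} + O(z^{-2}),
\]
and the crucial observation is that the $1/z$-coefficient $-F_1/F_\infty$ is \textbf{independent of $n$} (the $n$-dependence coming from $(\phi/(2z))^n$ first contributes at order $z^{-2}$). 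An analogous statement with $\hF$, $\hF_\infty$ holds on the hatted side, with a possibly different (but still $n$-independent) constant.

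Substituting $\bQ(z) = \Id + \bQ_1^{(n)}/z + O(z^{-2})$ and matching the $1/z$ coefficient of $Y(z) z^{-n\sigma_3}$ then gives
\[
 (Y_1^{(n)})_{11} = (\bQ_1^{(n)})_{11} - F_1/F_\infty, \qquad (Y_1^{(n)})_{22} = (\bQ_1^{(n)})_{22} + F_1/F_\infty,
\]
together with the off-diagonal rescalings $(Y_1^{(n)})_{12} = \alpha_n^{-2}(\bQ_1^{(n)})_{12}$ and $(Y_1^{(n)})_{21} = \alpha_n^{2}(\bQ_1^{(n)})_{21}$, where $\alpha_n := 2^n/F_\infty$, and analogous formulas on the hatted side. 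Formula \eqref{a_n-widehat a_n} is now immediate from $a_n = (Y_1^{(n)})_{11} - (Y_1^{(n+1)})_{11}$ in Proposition~\ref{Yab}: the $n$-independent constants $\mp F_1/F_\infty$ telescope out, yielding $a_n = (\bQ_1^{(n)})_{11} - (\bQ_1^{(n+1)})_{11}$ (and similarly for $\widehat a_n$); subtraction produces the claim.

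For the off-diagonals, the scalings $\alpha_n^{\pm 2}$ cancel in the product, so $b_{n-1}^2 = (Y_1^{(n)})_{12}(Y_1^{(n)})_{21} = (\bQ_1^{(n)})_{12}(\bQ_1^{(n)})_{21}$, and analogously $\widehat b_{n-1}^2 = (\hQ_1^{(n)})_{12}(\hQ_1^{(n)})_{21}$. The key structural input is a \emph{weight-independent} identity: from the Fokas--Its--Kitaev formula one reads off $(Y_1^{(n)})_{12} = -1/(2\pi\I\gamma_n^2)$ and $(Y_1^{(n+1)})_{21} = -2\pi\I\gamma_n^2$, so $(Y_1^{(n)})_{12}(Y_1^{(n+1)})_{21} = 1$ for \emph{any} weight; combined with the weight-independent ratio $\alpha_{n+1}^2/\alpha_n^2 = 4$ this yields
\[
 (\bQ_1^{(n)})_{12}(\bQ_1^{(n+1)})_{21} = (\hQ_1^{(n)})_{12}(\hQ_1^{(n+1)})_{21} \;\bigl(= \tfrac{1}{4}\bigr).
\]
Formula \eqref{b_n^2-widehat b_n^2} then follows by adding and subtracting the vanishing quantity $(\bQ_1^{(n)})_{12}(\bQ_1^{(n+1)})_{21} - (\hQ_1^{(n)})_{12}(\hQ_1^{(n+1)})_{21}$ in the identity $b_{n-1}^2 - \widehat b_{n-1}^2 = (\bQ_1^{(n)})_{12}(\bQ_1^{(n)})_{21} - (\hQ_1^{(n)})_{12}(\hQ_1^{(n)})_{21}$ and regrouping. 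The proof is thus pure bookkeeping at infinity: no new analytic estimates enter, and the only nontrivial structural ingredient is the universal identity displayed above.
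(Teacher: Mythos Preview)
Your proof is correct and follows the natural approach: undo the conjugation \eqref{TY} at infinity, observe that the diagonal shift $-F_1/F_\infty$ is $n$-independent so it telescopes in the $a_n$ formula, and for the off-diagonals exploit the universal identity $(Y_1^{(n)})_{12}(Y_1^{(n+1)})_{21}=1$ together with the weight-independent ratio $\alpha_{n+1}^2/\alpha_n^2=4$ to obtain $(\bQ_1^{(n)})_{12}(\bQ_1^{(n+1)})_{21}=(\hQ_1^{(n)})_{12}(\hQ_1^{(n+1)})_{21}=\tfrac14$, after which \eqref{b_n^2-widehat b_n^2} is pure algebra. The paper does not reproduce a proof here, citing \cite[Proposition~3.6]{CD} instead; your argument is essentially the one given there.
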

The usefulness of Proposition~\ref{PropRecQ} comes from the fact that $\bQ^{(n)}_1- \hQ^{(n)}_1$ has a simple integral representation.
\begin{Proposition}[{\cite[Proposition~4.9]{CD}}]
The following formula holds:
\begin{align}\label{QIntForm}
 \bQ^{(n)}_1- \hQ^{(n)}_1 = -\frac{1}{2\pi\I}\int_\Sigma \bQ^{(n)}_-(s)\big(\bv^{(n)}(s)-\hv^{(n)}(s)\big)\big[\hQ^{(n)}_-(s)\big]^{-1} {\rm d}s.
\end{align}
\end{Proposition}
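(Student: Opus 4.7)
The plan is to consider the ratio $R(z) := \bQ(z)\hQ(z)^{-1}$, which is analytic on $\C\setminus\Sigma$ and satisfies $R(z) = \Id + z^{-1}\bigl(\bQ_1^{(n)}-\hQ_1^{(n)}\bigr) + O\bigl(z^{-2}\bigr)$ as $z\to\infty$ (here $\det\hQ \equiv 1$ by the usual argument: the jumps are unimodular, hence $\det\hQ$ is entire, and normalisation at infinity forces it to equal $1$). Using $\bQ_+ = \bQ_-\bv$ and $\hQ_+ = \hQ_-\hv$, a direct computation yields
\begin{align*}
R_+(s) - R_-(s) = \bQ_-(s)\bigl(\bv(s)\hv(s)^{-1} - \Id\bigr)\hQ_-(s)^{-1}, \qquad s\in\Sigma.
\end{align*}

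The key algebraic observation is that on each arc where $\bv \ne \hv$ (namely $\Sigma_1\cup\Sigma_2$ and $(1,1+\delta)$), inspection of \eqref{jumpMatrixV}--\eqref{jumpMatrixhV} shows that $\bv-\hv$ is strictly lower triangular with vanishing diagonal, while $\hv$ itself is lower-triangular unipotent. The product of a strictly lower-triangular matrix with zero diagonal and a lower-triangular unipotent matrix equals the former, so $(\bv-\hv)\hv^{-1} = \bv - \hv$ and hence $\bv\hv^{-1} - \Id = \bv - \hv$. On $(-1,1)$ the two jumps coincide and the difference vanishes. In summary,
\begin{align*}
R_+ - R_- = \bQ_-(\bv - \hv)\hQ_-^{-1} \qquad \text{on } \Sigma.
\end{align*}
Provided that $R-\Id$ admits a Cauchy representation with this jump, expanding $(s-z)^{-1}$ in powers of $z^{-1}$ and matching the leading term against $\bigl(\bQ_1^{(n)}-\hQ_1^{(n)}\bigr)z^{-1}$ yields exactly \eqref{QIntForm}.

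The main obstacle is justifying the Cauchy representation near $z=-1$. The simple zeros of both $w$ and $\hw$ at $-1$ force the entries of $\bQ_\pm$ and $\hQ_\pm$ to be $O\bigl(|z+1|^{-1/2}\bigr)$, so naively $R$ could exhibit a borderline non-integrable $O\bigl(|z+1|^{-1}\bigr)$ singularity there. The resolution is that, in a neighbourhood of $-1$, both $\bQ$ and $\hQ$ admit a factorisation of the form (bounded analytic prefactor)~$\times$~(Bessel local parametrix), with the \emph{same} parametrix in both cases, precisely because $w$ and $\hw$ vanish simply and with the same local structure at $-1$ (cf.\ Section~\ref{LocPar}); the parametrix factors cancel in $\bQ\hQ^{-1}$, leaving $R$ bounded up to $z=-1$. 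Near $z=+1$, the factors of $\bQ$ and $\hQ^{-1}$ produce only logarithmic powers, so $R$ is again locally bounded. Integrability of the integrand on $\Sigma$ is then straightforward: Corollary~\ref{CorF-1} gives $(\bv-\hv)_{21} = O\bigl(|s+1|^{3/2}\bigr)$ near $-1$, which more than compensates the $|s+1|^{-1/2}$ factors from $\bQ_-$ and $\hQ_-^{-1}$, while on $(1,1+\delta)$ the exponential decay of $\phi^{-2n}$ together with Corollary~\ref{CorF+1} and \eqref{hat F^2/w at +1} provides the required control. With these endpoint bounds in place, the Plemelj--Sokhotski formula furnishes the Cauchy integral representation, and reading off the $z^{-1}$ coefficient completes the proof.
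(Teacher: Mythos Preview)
Your argument follows the same strategy as the paper: form the ratio $X(z)=\bQ(z)\hQ(z)^{-1}$, compute its additive jump, verify integrability on $\Sigma$, apply Sokhotski--Plemelj, and read off the $z^{-1}$ coefficient. The algebraic identity $\bv\hv^{-1}-\Id=\bv-\hv$ (from the strictly lower-triangular structure of $\bv-\hv$) is exactly what the paper uses in its last displayed line.

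The one substantive difference is how you argue that $X$ is bounded near $z=-1$. The paper does not invoke the Bessel parametrices at all here: it reverts to the $T$-picture (undoing the lens opening, equation~\eqref{TQRel} and its $\widehat{\ }$-analog), so that in $\Omega_0$ one has $X=T\widehat T^{-1}$, and the column structure \eqref{Tjump-1}, \eqref{hTjump-1} gives $O(1)$ by a direct $2\times2$ check; in $\Omega_1\cup\Omega_2$ an extra lower-triangular factor with $21$-entry $\mp\bigl(\Fw-\hFw\bigr)\phi^{-2n}$ appears, and \eqref{3/2decay} shows it is harmless. This is entirely self-contained within Sections~\ref{AuxiliaryFunctions}--\ref{RHProblems}. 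Your route via local parametrices can be made to work, but as written it is imprecise: $\bP$ and $\hP$ are \emph{not} the same (they differ by the analytic prefactors $E,\widehat E$ and the scalar factors $(F/W)^{\sigma_3}$, $(\hF/\widehat W)^{\sigma_3}$), so nothing literally ``cancels''; and the boundedness of $\bQ\bP^{-1}$ near $-1$ is only established later in the paper (Lemma~\ref{FinalLemma}) via the ${}^\star$-machinery. One can instead verify directly that $\Psi_1(\zeta)$ has the same column structure $(O(\zeta^{1/2}),O(\zeta^{-1/2}))$ as $T$ near $\zeta=0$, but that amounts to redoing the paper's $T$-computation in a different guise. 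The paper's argument is the cleaner and more elementary of the two.
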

\begin{proof}
In the following, we drop the subscript $(n)$ for better readability, i.e., write $\bQ_1$ for $\bQ^{(n)}_1$ and so on. Let us define the matrix-valued function $X(z) = \bQ(z) \big[\hQ(z)\big]^{-1}$ for $z \in \C \setminus \Sigma$. Note that
\begin{align}\label{Rformula}
 X(z) = \Id + \frac{\bQ_1-\hQ_1}{z} + O\left(\frac{1}{z^2}\right) \qquad \text{as } z \to \infty.
\end{align}
Moreover, $X$ satisfies the jump condition
\begin{align*}
 X_+(s) = X_-(s)v_X(s), \qquad s\in \Sigma,
\end{align*}
where $v_X = \hQ_- \bv \hv^{-1} \hQ_-^{-1}$. We claim that $X_\pm \in L^1(\Sigma)$. To see this let us first introduce the analog of $T$ for the weight function $\hw$:
\begin{align*}
 \widehat T(z) = \left(\frac{2}{\hF_\infty}\right)^{n\sigma_3} \widehat Y(z) \left(\frac{\hF(z)}{\phi^n(z)}\right)^{\sigma_3}.
\end{align*}
Here $\hat Y$ is the solution to the Fokas--Its--Kitaev problem for the weight function $\hw$. Then for~${z \in \Omega_0}$ (cf.\ Figure~\ref{lensJumpContour}), we have
\begin{align*}
 X(z) = T(z)\big[\widehat T(z)\big]^{-1} = O(1), \qquad z\in \Omega_0, \quad z\to-1,
\end{align*}
where we have used \eqref{Tjump-1} and its analog (see \cite[Section~2]{KMVV})
\begin{align}\label{hTjump-1}
 \widehat T(z) = \begin{pmatrix}
 O\big(|z+1|^{1/2}\big) & O\big(|z+1|^{-1/2}\big)
 \\
 O\big(|z+1|^{1/2}\big) & O\big(|z+1|^{-1/2}\big)
 \end{pmatrix}, \qquad z \to -1.
\end{align}
Note that the analog of \eqref{TQRel} remains valid
\begin{align*}
 \hQ(z) =
\begin{cases}
\widehat T(z), & z \in \Omega_0,
\\
\widehat T(z)
\begin{pmatrix}
1 & 0
\\
-\frac{\hF^2}{\hw}(z) \phi^{-2n}(z) & 1
\end{pmatrix},
& z \in \Omega_1,\vspace{1mm}
\\
\widehat T(z)
\begin{pmatrix}
1 & 0
\\
\frac{\hF^2}{\hw}(z) \phi^{-2n}(z) & 1
\end{pmatrix},& z \in \Omega_2.
\end{cases}
\end{align*}
Thus for $z\in \Omega_1\cup \Omega_2$, we have
\begin{align*}
 &X(z) = T(z)
 \begin{pmatrix}
 1 & 0
 \\
 \mp\big(\frac{F^2}{w}(z)-\frac{\hF^2}{\hw}(z)\big) \phi^{-2n}(z) & 1
 \end{pmatrix}
 \big[\widehat T(z)\big]^{-1} = O(1),
 \\
 &z\in \Omega_1\cup\Omega_2, \qquad z\to-1,
\end{align*}
where the $+$, resp.\ $-$ sign refer to $\Omega_1$ and $\Omega_2$. Apart from \eqref{Tjump-1} and \eqref{hTjump-1}, we have used in addition \eqref{3/2decay}. As $X$ can have only logarithmic singularities for $z \to 1$ and otherwise the limits to the contour are analytic, it follows that $X_\pm \in L^1(\Sigma)$.

Next, we can use the Sokhotski--Plemelj theorem to conclude that $X$ can be represented as
\begin{align*}
 X(z) &= \Id + \frac{1}{2\pi \I}\int_\Sigma \frac{X_+(s)-X_-(s)}{s-z} {\rm d}s
 \\
 &= \Id + \frac{1}{2\pi \I}\int_\Sigma \frac{Q_-(s)\big(\bv(s)[\hv(s)]^{-1}-\Id\big)\big[\hQ_-(s)\big]^{-1}}{s-z} {\rm d}s
 \\
 &= \Id + \frac{1}{2\pi \I}\int_\Sigma \frac{Q_-(s)(\bv(s)-\hv(s))[\hQ_-(s)]^{-1}}{s-z} {\rm d}s,
\end{align*}
where we made use of the particular form of the jump matrices $\bv$ and $\hv$. This representation together with \eqref{Rformula} implies \eqref{QIntForm}.
\end{proof}

Finally, we recall the Legendre RH problem taken from \cite[Section~3.4]{CD}. While this RH problem does not approximate the logarithmic RH problem globally, it does so near the logarithmic singularity and additionally gives rise to a singular integral operator whose inverse is uniformly bounded as $n \to \infty$ (see Theorem~\ref{UnifLeg}). The existence of a RH problem with these properties will be crucial for the proof of Theorem~\ref{TheoremUniInv}. Note that the Szeg\H{o} function for the Legendre weight is just given by $\widetilde F \equiv 1$.

\subsection{Legendre RH problem}\label{LegRHP} Find a $2\times2$ matrix-valued function $\tQ \colon \C \setminus \Sigma \to \C^{2\times2}$ satisfying the following properties (see \cite[Section~3.4]{CD}):
\begin{enumerate}[(i)]\itemsep=0pt
 \item $\tQ(z)$ is analytic for $z \in \C \setminus \Sigma$,
 \item $\tQ$ satisfies the jump condition
 \begin{align*}
 \tQ_+(s) = \tQ_-(s) \tv(s), \qquad s\in \Sigma,
 \end{align*}
 where
 \begin{align*}
\tv(s) =
\begin{cases}
\begin{pmatrix}
1 & 0
\\
\phi^{-2n}(s) & 1
\end{pmatrix}, & \mbox{for}\ s \in \Sigma_1 \cup \Sigma_2,\vspace{1mm}
\\
\begin{pmatrix}
1 & 0
\\
2\phi^{-2n}(s) & 1
\end{pmatrix}, & \mbox{for}\ s\in(1,1+\delta),\vspace{1mm}
\\
\begin{pmatrix}
0 & 1
\\
-1 & 0
\end{pmatrix}, & \mbox{for}\ s \in (-1,1),
\end{cases}
\end{align*}
\item $\tQ(z) = \Id + O\big(z^{-1}\big)$ as $z \rightarrow \infty$,
\item $\tQ(z)$ is bounded away from the points $\pm 1$, and has the following behaviours near the points~$\pm1$:
\begin{align*}
 \tQ(z) = \begin{pmatrix}
 O(\log|z-1|) & O(\log|z-1|)
 \\
 O(\log|z-1|) & O(\log|z-1|)
 \end{pmatrix}, \qquad z\to +1
\end{align*}
and
\begin{align*}
 \tQ(z) = \begin{pmatrix}
 O(\log|z+1|) & O(\log|z+1|)
 \\
 O(\log|z+1|) & O(\log|z+1|)
 \end{pmatrix}, \qquad z \to -1.
\end{align*}
\end{enumerate}

The weight function $\tw(x) = 1$, $x\in(-1,1)$ lies in the class of weight functions considered in~\cite{KMVV}, and it follow from the calculations therein that the solution $\tQ$ can be globally approximated with arbitrary small errors. Moreover, unlike the logarithmic and model RH problems found in Sections~\ref{logRHP} and \ref{ModRHP}, the Legendre RH problem can be stated with a weaker $L^2$-condition instead of condition (iv) (cf.\ \cite[Proposition~3.2]{CD}), as follows.

\begin{Proposition}\label{LegUniq}
 The matrix-valued function $\tQ$ is the unique solution of the Legendre RH problem with the condition \emph{(iv)} being replaced by the condition $\tQ_\pm \in L^2([-1,1])$.
\end{Proposition}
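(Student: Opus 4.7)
\medskip
\noindent
\textbf{Proof proposal.} Assume $\tQ_1$ and $\tQ_2$ are two solutions of (i)--(iii) satisfying $\tQ_{j,\pm}\in L^2([-1,1])$. The plan is to form the matrix quotient $X(z):=\tQ_1(z)[\tQ_2(z)]^{-1}$ and argue, via Liouville's theorem, that $X\equiv \Id$, which forces $\tQ_1\equiv\tQ_2$.

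\medskip
The first step is to show $\det \tQ_j\equiv 1$ for $j=1,2$. Since $\det\tv\equiv 1$ on $\Sigma$, $\det\tQ_j$ has no jumps across $\Sigma$ and therefore extends to a function holomorphic on $\C\setminus\{-1,+1\}$ tending to $1$ at infinity. The $L^2$-condition combined with Cauchy--Schwarz yields $(\det\tQ_j)_\pm\in L^1$ locally near $\pm 1$; a standard isolated-singularity argument based on off-contour growth bounds (coming, say, from the Cauchy-type representation of $\tQ_j$) then shows that the singularities of $\det\tQ_j$ at $\pm 1$ are removable. Being a bounded entire function with limit $1$ at infinity, $\det\tQ_j\equiv 1$.

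\medskip
With $\det\tQ_2\equiv 1$, the entries of $\tQ_2^{-1}$ are (up to sign) those of $\tQ_2$, hence $\tQ_2^{-1}$ also has $L^2$-boundary values on $[-1,1]$. The identity $X_+=\tQ_{1,-}\tv(\tQ_{2,-}\tv)^{-1}=\tQ_{1,-}\tQ_{2,-}^{-1}=X_-$ shows that $X$ has no jump across $\Sigma$, so $X$ extends holomorphically to $\C\setminus\{-1,+1\}$ with $X(\infty)=\Id$. The entries of $X$ are products of two $L^2$-boundary functions and hence in $L^1$ on $[-1,1]$ near the endpoints, which combined with an analogous isolated-singularity argument shows that $\pm 1$ are removable singularities of $X$. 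Thus $X$ is a bounded entire function with $X(\infty)=\Id$, and Liouville's theorem gives $X\equiv \Id$, i.e., $\tQ_1\equiv\tQ_2$.

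\medskip
\noindent
The main obstacle is the removability step at $\pm 1$: one must upgrade the $L^1$/$L^2$-integrability of boundary values on $[-1,1]$ to an off-contour estimate strong enough to exclude non-removable singularities of $X$. The naive Cauchy--Schwarz bound on a Cauchy-type representation of $\tQ_j$ only gives $|\tQ_j(z)|=O(|z\mp 1|^{-1/2})$, hence $|X(z)|=O(|z\mp 1|^{-1})$---precisely the borderline case where Riemann's removability theorem just fails. The gap can be closed by exploiting the finer Hardy-space behaviour of Cauchy integrals with $L^2$-density near contour endpoints, or by representing $X-\Id$ itself as a Cauchy integral of its (integrable) jump and deducing the strictly better $o(|z\mp 1|^{-1})$ growth.
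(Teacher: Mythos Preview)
Your approach coincides with the paper's: show $\det\tQ_j\equiv 1$ via Liouville, form $X=\tQ_1\tQ_2^{-1}$, argue $X$ is entire, apply Liouville again. The paper's proof is terser only in that it compresses your removability discussion into a single appeal to Morera's theorem: once $\det L_+=\det L_-$ and $(\det L)_\pm\in L^1(\Sigma)$, the paper simply asserts that $\det L$ is entire, and likewise for $\tQ L^{-1}$.

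Your caution about the endpoints is well placed and goes beyond what the paper spells out. A function holomorphic in a punctured disk whose restriction to finitely many arcs through the puncture lies in $L^1$ can still have an essential singularity there (consider $e^{-1/(z+1)^2}$ near $-1$: it is bounded on any arc leaving $-1$ at a small enough angle to the positive real axis, yet blows up along the imaginary direction). So Morera across the open arcs does not by itself force removability at $\pm 1$. Your proposed remedy---reading the $L^2$-condition in the Hardy-space sense, i.e.\ $\tQ_j-\Id\in\Cau(L^2(\Sigma))$---is exactly what closes this: for $h\in L^2(\Sigma)$ one has $|\Cau(h)(z)|=o\bigl(|z\mp 1|^{-1/2}\bigr)$ as $z\to\pm 1$ (split $h$ into a small-$L^2$ piece supported near the endpoint plus a piece giving a bounded Cauchy integral), whence $\det\tQ_j-1$ and $X-\Id$ are $o\bigl(|z\mp 1|^{-1}\bigr)$ and hence have removable singularities. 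This Hardy-space reading is precisely how the proposition is used immediately afterward in the paper, where $\tQ$ is identified with $\Id+\Cau(\tmu(\tv-\Id))$; the paper is implicitly working under your strengthened hypothesis and simply does not isolate the endpoint step.
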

\begin{proof}
Let $L$ be another solution of the Legendre RH problem, but with $L_\pm\in L^2([-1,1])$ instead of condition (iv). Then $\det L$ will be a holomorphic function in $\C \setminus [-1,1]$ with
$\det L_+ = \det L_-$ on $\Sigma$ and $\det L_\pm \in L^1(\Sigma)$. By Morera's theorem, it follows that $\det L$ is in fact an entire function with $\lim_{z\to\infty} \det L(z) = 1$. Hence by Liouville's theorem $\det L \equiv 1$ in $\C$.

We conclude that $L$ is invertible in $\C \setminus \Sigma$ and we can define $\tQ[L]^{-1}$. As with the determinant, $\tQ[L]^{-1}$ will have no jump across the contour~$\Sigma$: \smash{$\big(\tQ[L]^{-1}\big)_+=\big(\tQ[L]^{-1}\big)_-$}. Moreover, by the $L^2$-condition for $\tQ$ and $L$, we have that \smash{$\big(\tQ[L]^{-1}\big)_\pm \in L^1(\Sigma)$}. It follows that $\tQ[L]^{-1}$ can be extended to an entire function. By Liouville's theorem, we have that $\tQ[L]^{-1} \equiv \lim_{z\to\infty} \tQ(z)[L(z)]^{-1} = \Id$, hence $\tQ = L$.
\end{proof}

\section{An explicit formula for the Legendre resolvent}\label{Section_LegendreResolvent}
In this section, we will associate to the Legendre RH problem a singular integral operator. First, let us define the Cauchy operator on $\Sigma$ by
\begin{align*}
 \Cau \colon\ L^2(\Sigma) \to \mathcal O(\C \setminus \Sigma), \qquad f(s) \mapsto \Cau(f)(z) = \frac{1}{2\pi \I }\int_\Sigma \frac{f(s)}{s-z} {\rm d}s,
\end{align*}
where $\mathcal O(\C \setminus \Sigma)$ denotes the set of analytic functions on the open set $\C \setminus \Sigma$. For $f \in L^2(\Sigma)$, we further define the two Cauchy boundary operators by
\begin{align}\label{DefCau}
 \Cau^\pm(f)(s) = \lim_{z\to s\pm} \Cau(f)(z).
\end{align}
In our setting, the curve $\Sigma$ is clearly a Carleson curve and hence the limit in \eqref{DefCau} exists for almost all $s \in \Sigma$
and satisfies $\Cau^\pm(f) \in L^2(\Sigma)$, see \cite{BK} for more details. We define the two Cauchy boundary operators via
\begin{align*}
\Cau^\pm \colon\ L^2(\Sigma) \to L^2(\Sigma).
\end{align*}
These are bounded operators on $\Ltwo$, cf.\ Theorem \ref{Muckenhoupt} below. Note that $C^\pm_{\Sigma}$ satisfy the important identity $C^+_\Sigma - C^-_\Sigma = 1$.

More generally, the mapping in \eqref{DefCau} induces a bounded operator on certain weighted $L^p$-spaces. To be precise let $\Gamma$ be an oriented composed locally rectifiable curve (see \cite[Section~1]{BK}) and $p \in (1,\infty)$. Given a weight function $r \colon \Gamma \to \R$, $r \geq 0$, define the Banach space $L^p(\Gamma, r)$ of all measurable functions $f$ on $\Gamma$, such that the norm
\begin{align*}
 \Vert f \Vert_{L^p(\Gamma, r)} = \bigg( \int_\Gamma |f(s)|^p r(s)^p |{\rm d}s| \bigg)^{\frac{1}{p}}
\end{align*}
remains finite. Note that there is a $p$-th power of $r$ in the integral. We say that $r$ is a \emph{Muckenhoupt weight} if $r \in L^p_{\rm loc}(\Gamma)$, $1/r \in L^q_{\rm loc}(\Gamma)$ and
\begin{align*}
 \sup_{s\in \Gamma} \sup_{\rho > 0} \bigg(\frac{1}{\rho} \int_{\Gamma \cap D(s, \rho)} r(s')^p |{\rm d}s'| \bigg)^{\frac{1}{p}}
 \bigg( \frac{1}{\rho}\int_{\Gamma \cap D(s, \rho)} r(s')^{-q}|{\rm d}s'| \bigg)^{\frac{1}{q}} < \infty,
\end{align*}
where $D(s, \rho)$ is the open disc around $s$ of radius $\rho$ and $1/p + 1/q = 1$. For any $p \in (1, \infty)$ we denote the set of all Muckenhoupt weights by $A_p(\Gamma)$. The following results holds.

\begin{Theorem}\label{Muckenhoupt}
Let $p\in(1, \infty)$ and let $\Gamma$ be an oriented composed locally rectifiable curve. Assume $r \colon \Gamma \to \R$, $r \geq 0$ is a given weight. Then the mappings
\begin{align}\label{CauchyMapping}
 f \mapsto \lim_{z\to s\pm} \int_\Gamma \frac{f(s)}{s-z} {\rm d}s
\end{align}
define bounded operators from $L^p(\Gamma, r) \to L^p(\Gamma, r)$ if and only if $r$ is a Muckenhoupt weight, i.e., $r\in A_p(\Gamma)$.
\end{Theorem}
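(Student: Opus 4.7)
This is the Hunt--Muckenhoupt--Wheeden characterization of the Cauchy singular integral, extended from the real line to composed locally rectifiable Carleson curves. A complete proof at exactly this level of generality is carried out in the Böttcher--Karlovich monograph \cite{BK}, so my ``proof'' in the paper would be essentially a citation to that reference. Were I to spell out the argument, however, it would split into the two expected halves, corresponding to the two implications, both of which I sketch below.

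For sufficiency ($r \in A_p(\Gamma) \Rightarrow \Cau^\pm$ bounded), I would fix a finite partition of unity on $\Gamma$ subordinate to a cover by short smooth subarcs, with additional refinement at the finitely many points of self-intersection. On each subarc, arclength provides a bi-Lipschitz parametrization by an interval in $\mathbb{R}$, so the $A_p$ class transfers to the standard Muckenhoupt class on $\mathbb{R}$ with controlled constants. The local (diagonal) part of the Cauchy operator is then a truncated Hilbert transform, bounded on weighted $L^p(\mathbb{R},r)$ by the classical Hunt--Muckenhoupt--Wheeden theorem (via good-$\lambda$ or sharp maximal function techniques). The long-range (off-diagonal) part has a smooth bounded kernel and is controlled by a Schur estimate that uses the pair bound built into the $A_p$ definition. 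The nontangential boundary values $\Cau^\pm$ are then identified with the principal value plus the Plemelj--Sokhotski half-residue by the standard Carleson-curve boundary-value theory.

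For necessity (boundedness $\Rightarrow r \in A_p(\Gamma)$), the argument is the usual weak-type test. Fix $s \in \Gamma$ and $\rho > 0$, apply $\Cau^\pm$ to $f(s') = r(s')^{-q}\chi_{\Gamma \cap D(s,\rho)}(s')$, and use the elementary lower bound $|\Cau^\pm f(s')| \gtrsim \rho^{-1}\int_{\Gamma \cap D(s,\rho)} r(s'')^{-q}\,|{\rm d}s''|$ for $s' \in \Gamma \cap D(s,\rho)$. The assumed operator norm inequality then rearranges, after an application of Hölder with conjugate exponents, into exactly the Muckenhoupt product estimate uniformly in $s$ and $\rho$.

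The main obstacle is not conceptual but geometric: on a composed curve with self-intersections one must keep the $A_p$ constant uniform across local reparametrizations and across branch points, and ensure that the partition-of-unity decomposition produces a \emph{uniform} Schur estimate for the long-range pieces. Since only finitely many self-intersections occur, this is handled by a finite refinement of the partition and standard Carleson-type bounds along $\Gamma$; the details are worked out in \cite{BK}, which the paper simply quotes.
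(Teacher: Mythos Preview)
Your proposal is correct and matches the paper's treatment exactly: the paper does not prove this theorem but simply cites \cite[Theorem~4.15]{BK} (with an additional pointer to \cite{JL}), which is precisely what you identify as the appropriate reference. Your sketch of the two implications goes beyond what the paper provides and is a reasonable outline of the Hunt--Muckenhoupt--Wheeden argument in this setting.
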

The proof can be found in \cite[Theorem 4.15]{BK}, for more material on this topic with emphasis on RH theory see \cite{JL}. We will abuse notation and denote the mapping \eqref{CauchyMapping} by $\mathcal C^\pm_\Gamma$ irrespectively of the choice of domain $L^p(\Gamma, r)$.

Next, let us define the operator
\begin{align*}
 \Cautv \colon \ L^2(\Sigma) \to L^2(\Sigma), \qquad f \mapsto \Cau^-(f (\tv-\Id)),
\end{align*}
and consider the following singular integral equation in $L^2(\Sigma)$
\begin{align}\label{LegEq}
 (1-\Cautv)\tmu = \Id.
\end{align}
Note that as the contour is bounded, $\Id$ is indeed an element of $L^2(\Sigma)$. Equation \eqref{LegEq} is, in fact, equivalent to the Legendre RH problem. More explicitly, any solution $\tmu$ will give rise to a~solution $L = \Id + \Cau(\tmu(\tv -\Id))$ of the Legendre RH problem with the condition $L_\pm \in L^2(\Sigma)$ instead of condition (iv), as can be verified by direct computation. By Proposition~\ref{LegUniq}, the solution to the Legendre RH problem (see Section~\ref{LegRHP}) exists and is unique, hence we must have $\tQ = L$ implying
\begin{align}\label{Qmurelation}
 \tQ = \Id + \Cau(\tmu(\tv -\Id)).
\end{align}
Moreover, from the Sokhotski--Plemelj formula
\begin{align*}
 \tQ = \Id + \Cau\big(\tQ_+-\tQ_-\big) = \Id + \Cau\big(\tQ_-(\tv-\Id)\big),
\end{align*}
it follows, after taking the minus limit to the contour $\Sigma$, that $\tmu := \tQ_-$ is indeed a solution to~\eqref{LegEq}. Moreover, any solution $\tmu$ of \eqref{LegEq} must be equal to $\tQ_-$ as can be seen from \eqref{Qmurelation} and
\begin{align*}
 \tQ_- = \Id + \Cau^-(\tmu(\tv-\Id)) = \Id +\Cautv \tmu = \tmu.
\end{align*}
Together these arguments imply that \eqref{LegEq} has a unique solution and hence $1-\Cautv$ must be injective. In \cite{CD}, it has been shown that $1-\Cautv$ is in fact uniformly invertible for $n \to \infty$, as described in the following result.

\begin{Theorem}[{\cite[Theorem 4.5]{CD}}]\label{UnifLeg}
The operator $1-\Cautv$ is invertible for all sufficiently large $n$. Moreover, the operator bound of $(1-\Cautv)^{-1}$ as an operator $\Ltwo \to \Ltwo$ remains uniformly bounded for $n \to \infty$.
\end{Theorem}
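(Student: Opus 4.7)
The plan is the standard small-norm reduction from nonlinear steepest descent, adapted to extract operator bounds via the Beals--Coifman formalism. Since $\tv - I$ is of order $1$ on $(-1,1)$, one cannot treat $1 - \Cautv$ by a direct Neumann series. Instead, one constructs a global parametrix $N^{(n)}$ exactly matching the non-decaying part of the jump, and works with the ratio $R^{(n)} = \tQ^{(n)} [N^{(n)}]^{-1}$, whose jump is small.

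\emph{Parametrix and ratio.} Fix a small $r > 0$ and set $D_{\pm 1} = D(\pm 1, r)$. Build $N^{(n)}$ piecewise: outside $D_{\pm 1}$, take the standard outer parametrix expressed in terms of $\phi$ (no Szeg\H{o} factor is needed since $\widetilde F \equiv 1$); inside each $D_{\pm 1}$, take the Bessel parametrix from \cite[Section~6]{KMVV}, which reproduces the jump on $(-1,1) \cap D_{\pm 1}$ exactly, matches the outer parametrix on $\partial D_{\pm 1}$ up to an $O(1/n)$ error, and exhibits the $O(\log|z \mp 1|)$ endpoint behavior prescribed by condition (iv) of Section~\ref{LegRHP}. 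Then $R^{(n)}$ extends analytically across $(-1,1)$ and satisfies an RH problem on the reduced contour $\Sigma_R$ consisting of $\partial D_{\pm 1}$ together with the pieces of $\Sigma_1$, $\Sigma_2$, and $(1,1+\delta)$ lying outside the discs. Its jump $v_R$ satisfies $\Vert v_R - I \Vert_{L^2(\Sigma_R)\cap L^\infty(\Sigma_R)} = O(1/n)$: on the lens and on $(1,1+\delta)\setminus D_{+1}$ the decay follows from the exponential smallness of $|\phi^{-2n}|$ guaranteed by~\eqref{phiGreater1}, and on $\partial D_{\pm 1}$ from the standard Bessel matching expansion. By the usual small-norm theorem, the operator $f \mapsto \mathcal{C}^-_{\Sigma_R}(f(v_R - I))$ has norm $O(1/n)$ on $L^2(\Sigma_R)$, so its complement is invertible for large $n$ with uniformly bounded inverse.

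\emph{Transfer to $1 - \Cautv$.} The invertibility is transferred via conjugation by $N^{(n)}$: given $h \in L^2(\Sigma)$, one seeks the solution of $(1-\Cautv)\mu = h$ in the form $\mu = \nu \, N^{(n)}_-$, where $\nu$ obeys a small-norm integral equation on $\Sigma_R$ derived from the ratio construction. The uniform bound on $(1-\Cautv)^{-1}$ then reduces to (a) the uniform bound on the inverse of the conjugated operator from the previous paragraph, and (b) the uniform boundedness in $n$ of multiplication by $N^{(n)}_\pm$ and their inverses as maps between the relevant $L^2$-spaces on $\Sigma$ and $\Sigma_R$.

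\emph{Main obstacle.} The delicate point is (b): $N^{(n)}$ carries $O(\log|z \mp 1|)$ singularities inherited from the Bessel parametrix, and one must certify that the conjugation preserves $L^2$-boundedness with $n$-independent constants. The explicit structure of the Bessel parametrix is exactly tailored to the Jacobi edge behavior and produces multiplicative factors that are uniformly bounded in the appropriate function-space sense. This $L^2$-compatibility of the parametrices at the endpoints is the substantive step behind \cite[Theorem~4.5]{CD} and is what elevates the small-norm estimate from a per-$n$ invertibility statement to a \emph{uniform} bound on $(1-\Cautv)^{-1}$.
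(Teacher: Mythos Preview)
Your overall strategy---build a global parametrix from the outer solution plus Bessel local parametrices, reduce the ratio problem to a small-norm RH problem on $\Sigma_R$, then transfer the resulting uniform bound back to $1-\Cautv$---is exactly the route taken in \cite[Section~4.2]{CD}, which is what the paper cites for this result. Two points deserve sharpening.

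First, the transfer step as you wrote it is imprecise. The substitution $\mu = \nu\,N^{(n)}_-$ does not by itself produce a clean integral equation on $\Sigma_R$: the contours $\Sigma$ and $\Sigma_R$ differ, and $N^{(n)}$ has its own jumps on the portions of $\Sigma$ inside the discs. The correct mechanism, and the one actually used in \cite{CD}, is the equivalence in Theorem~\ref{TheoremInh}/Corollary~\ref{CorHn}: bounding $(1-\Cautv)^{-1}$ is the same as bounding $\Vert m_- \Vert_{L^2(\Sigma)}$ in terms of $\Vert g \Vert_{L^2(\Sigma)}$ for the inhomogeneous RH problem $m_+ = m_-\tv$, $m_\pm = \Cau^\pm(f)+g$. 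One then sets $\widetilde m = m\,[N^{(n)}]^{-1}$, which satisfies an inhomogeneous problem on $\Sigma_R$ with jump $v_R = I + O(1/n)$, and controls $\widetilde m_\pm$ by small-norm theory. Recovering $m_-$ from $\widetilde m_-$ is where your step (b) enters.

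Second, your description of the endpoint singularity is off. For fixed $n$ the Legendre Bessel parametrix indeed blows up only logarithmically, but the \emph{uniform-in-$n$} bound is $O(|z\mp 1|^{-1/4})$; see Lemma~\ref{PropP}\,(iv) and equation~\eqref{LegendreBehaviour}. It is this quarter-root growth (still in $L^2$, and in fact in a Muckenhoupt-weighted $L^2$) that must be shown compatible with the Cauchy operator when passing from $\widetilde m_-$ back to $m_- = \widetilde m_-\,N^{(n)}_-$. You correctly flag this as the substantive step, but you do not carry it out; that analysis is precisely the content of \cite[Section~4.2]{CD}.
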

Theorem \ref{UnifLeg} played a central role in \cite{CD}. The uniform invertibility of the operator $1-\Cautv$ will also be crucial in the approach presented here and is the motivation for introducing the Legendre RH problem in addition to the logarithmic and model RH problems. However, in order to use Theorem~\ref{UnifLeg}, we will need to derive an explicit representation of the operator $(1-\Cautv)^{-1}$. To accomplish this, we recall the definition of an \emph{inhomogeneous RH problem of the first kind}, as introduced in \cite[Section~2.6]{DZSob}. This notion has been instrumental in the proof of Theorem \ref{UnifLeg} in~\cite{CD}. In the following, $h$ will denote a matrix-valued function on a contour $\Gamma$, with $h^{\pm 1} \in L^\infty(\Gamma)$.

\subsection*{Inhomogeneous RH problem of the first kind} For a given $g \in L^2(\Gamma)$, one seeks an $f \in L^2(\Gamma)$, such that $m_\pm = \Cau^\pm(f)+g$ satisfies the jump relation:
\begin{align}\label{HJump}
 m_+(s) &= m_-(s) h(s), \qquad s \in \Gamma.
\end{align}
A similar notion of an inhomogeneous RH problem of the second kind can be found in \cite[Section~2]{DZSob} but will not be needed here.

The importance of the above inhomogeneous RH problem comes from the following result proven in \cite[Proposition~2.6]{DZSob}.

\begin{Theorem}\label{TheoremInh}
 The mapping $1-\mathcal C_h \colon L^2(\Gamma) \to L^2(\Gamma)$ is invertible if and only if the corresponding inhomogeneous RH problem of the first kind is uniquely solvable for each $g \in L^2(\Gamma)$. Moreover, the inverse satisfies \smash{$\big\Vert (1-\mathcal C_h)^{-1}\big\Vert_{L^2(\Gamma)\to L^2(\Gamma)} \leq c$} if and only if $\Vert m_- \Vert_{L^2(\Gamma)} \leq c \Vert g \Vert_{L^2(\Gamma)}$ for all~${g\in L^2(\Gamma)}$ and the same constant $c > 0$.
 \end{Theorem}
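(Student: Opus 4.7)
The plan is to exhibit a bijective correspondence between $L^2$-solutions $f$ of the inhomogeneous RH problem with datum $g$ and $L^2$-solutions $\mu$ of the singular integral equation $(1-\mathcal{C}_h)\mu = g$, via the identification $\mu := m_-$ together with the substitution $f = \mu(h-1)$. Once this dictionary is in place, all three assertions of the theorem---invertibility, unique solvability, and the norm equivalence---translate directly.

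First I would derive the correspondence in both directions. Using the Plemelj jump identity $\mathcal{C}^+ f - \mathcal{C}^- f = f$, the ansatz $m_\pm = \mathcal{C}^\pm(f) + g$ automatically gives $f = m_+ - m_-$. Combined with the jump condition $m_+ = m_- h$, this produces $f = m_-(h-1)$. Setting $\mu := m_-$ and substituting back into $\mu = \mathcal{C}^-(f) + g$ yields $\mu = \mathcal{C}^-(\mu(h-1)) + g$, i.e.\ $(1-\mathcal{C}_h)\mu = g$. Conversely, given any $\mu \in L^2(\Gamma)$ with $(1-\mathcal{C}_h)\mu = g$, I set $f := \mu(h-1)$, which lies in $L^2(\Gamma)$ because $h-1 \in L^\infty(\Gamma)$, and define $m_\pm := \mathcal{C}^\pm(f) + g$; these lie in $L^2(\Gamma)$ by Theorem \ref{Muckenhoupt}. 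A direct computation gives $m_- = \mathcal{C}_h\mu + g = \mu$ and $m_+ = m_- + f = \mu h = m_- h$, verifying the jump relation \eqref{HJump}.

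With this correspondence, the biconditional is immediate. If $1-\mathcal{C}_h$ is invertible, then for each $g$ the unique $\mu = (1-\mathcal{C}_h)^{-1} g$ produces a solution $f = \mu(h-1)$ of the RH problem; uniqueness of $f$ follows because any two $L^2$-solutions $f_1,f_2$ produce, via $\mu_i := \mathcal{C}^- f_i + g$, two elements of $L^2(\Gamma)$ sent to the same $g$ by $1-\mathcal{C}_h$, forcing $\mu_1 = \mu_2$ and hence $f_i = \mu_i(h-1)$ agreeing. Conversely, assume unique solvability of the RH problem for every $g$. Surjectivity of $1-\mathcal{C}_h$ follows by passing from the RH solution to $\mu = m_-$. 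For injectivity, given $\mu$ with $(1-\mathcal{C}_h)\mu = 0$, the construction above produces $f = \mu(h-1)$ solving the homogeneous RH problem; uniqueness forces $f = 0$, whence $m_\pm = \mathcal{C}^\pm(0) + 0 = 0$, and combining with the algebraic identity $m_- = \mu$ gives $\mu = 0$.

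The norm bound then follows at once: the correspondence identifies $(1-\mathcal{C}_h)^{-1} g$ with $m_-$, so the statement $\Vert(1-\mathcal{C}_h)^{-1}\Vert_{L^2 \to L^2} \leq c$ is literally the same as $\Vert m_- \Vert_{L^2} \leq c \Vert g \Vert_{L^2}$ uniformly in $g$, with the same constant. The one subtle point I expect is in the injectivity step: one must be careful \emph{not} to try to cancel the factor $(h-1)$ in the relation $\mu(h-1) = 0$ (since $h-1$ need not be pointwise invertible), but instead to route the argument through $m_- = 0$, which comes from $f = 0$ via the Plemelj identity. Everything else is either the boundedness of $\mathcal{C}^\pm$ on $L^2(\Gamma)$ (guaranteed by Theorem \ref{Muckenhoupt} with trivial Muckenhoupt weight $r \equiv 1$) or bookkeeping.
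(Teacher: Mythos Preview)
Your proof is correct and follows the standard approach. The paper itself does not supply a proof of this theorem but cites \cite[Proposition~2.6]{DZSob}; the correspondence you set up between solutions $\mu$ of $(1-\mathcal C_h)\mu=g$ and solutions $f$ of the inhomogeneous RH problem via $\mu=m_-$ and $f=\mu(h-\Id)$ is exactly the dictionary the paper later recalls (in the proof of Proposition~\ref{ExpForm}) from that reference, and your handling of the injectivity step---routing through $m_-=0$ rather than attempting to cancel $h-\Id$---is the right way to close the argument.
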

 \begin{Remark}
 Note that by \eqref{HJump}, $m_+ = m_- h$ on $\Gamma$, hence $\Vert m_- \Vert_{L^2(\Gamma)} \leq c \Vert g \Vert_{L^2(\Gamma)}$ implies $\Vert m_\pm \Vert_{L^2(\Gamma)} \leq c'\Vert g \Vert_{L^2(\Gamma)}$ with $c'= c\Vert h \Vert_{L^\infty(\Gamma)}$.
 \end{Remark}
 As a corollary we obtain.
 \begin{Corollary}\label{CorHn}
 Given a sequence $h_n$ of matrix-valued functions with $h_n^{\pm 1} \in L^\infty(\Gamma)$, the operator $(1-\mathcal C_{h_n})^{-1}$ is uniformly bounded if and only if the corresponding inhomogeneous RH problems are uniquely solvable with $\Vert m_- \Vert_{L^2(\Gamma)} \leq c \Vert g \Vert_{L^2(\Gamma)}$, and $c > 0$ independent of $n \in \mathbb{N}$ and $g \in L^2(\Gamma)$.
\end{Corollary}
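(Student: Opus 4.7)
\medskip

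The corollary is essentially a termwise reformulation of Theorem~\ref{TheoremInh} applied to the sequence $h_n$, where the key observation is that the equivalence in Theorem~\ref{TheoremInh} preserves the exact constant $c$ on both sides. The plan is thus to apply Theorem~\ref{TheoremInh} to each $h_n$ individually and then pass the uniformity in $n$ through the equivalence.

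For the forward direction, I would assume that $(1-\mathcal C_{h_n})^{-1}$ is defined as a bounded operator on $L^2(\Gamma)$ for all sufficiently large $n$, with $\bigl\Vert (1-\mathcal C_{h_n})^{-1}\bigr\Vert \leq c$ uniformly in $n$. For each such $n$, Theorem~\ref{TheoremInh} then implies that the associated inhomogeneous RH problem of the first kind with jump $h_n$ is uniquely solvable for every $g \in L^2(\Gamma)$, and that the minus boundary value satisfies $\Vert m_- \Vert_{L^2(\Gamma)} \leq c \Vert g \Vert_{L^2(\Gamma)}$ with the same constant $c$. Since $c$ does not depend on $n$, the stated uniform estimate follows.

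For the reverse direction, I would assume that for all sufficiently large $n$ the inhomogeneous RH problem with jump $h_n$ is uniquely solvable for every $g \in L^2(\Gamma)$, with $\Vert m_- \Vert_{L^2(\Gamma)} \leq c \Vert g \Vert_{L^2(\Gamma)}$ and a single constant $c > 0$. Fixing $n$ and invoking Theorem~\ref{TheoremInh} in the other direction, this gives the invertibility of $1 - \mathcal C_{h_n}$ on $L^2(\Gamma)$ together with the operator bound $\bigl\Vert (1-\mathcal C_{h_n})^{-1}\bigr\Vert \leq c$, again with the same $c$. Since this bound is $n$-independent, uniform invertibility follows.

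No substantive obstacle is expected; the proof is a direct quotation of Theorem~\ref{TheoremInh} applied along a sequence, with the remark that the hypotheses $h_n^{\pm 1} \in L^\infty(\Gamma)$ guarantee the Cauchy operator $\mathcal C_{h_n}$ is well-defined on $L^2(\Gamma)$ in the first place, so Theorem~\ref{TheoremInh} applies term by term. The only mildly delicate point is to emphasize that the constants on the two sides of the equivalence in Theorem~\ref{TheoremInh} coincide, which is precisely what makes the uniformization possible.
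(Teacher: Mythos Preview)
Your proposal is correct and matches the paper's treatment: the paper states the corollary without proof, treating it as an immediate consequence of Theorem~\ref{TheoremInh} applied term by term along the sequence $h_n$, which is exactly what you do.
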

Note that Theorem \ref{UnifLeg} is proven in \cite[Section~4.2]{CD} via Corollary~\ref{CorHn}. We will use Theorems~\ref{UnifLeg} and~\ref{TheoremInh} to derive an explicit expression for the inverse $(1-\Cautv)^{-1}$ in Proposition~\ref{ExpForm} below. This allows us to identify locally the contribution of the logarithmic singularity to the uniform boundedness of $(1-\Cautv)^{-1}$, which is central to our approach as it enables us to prove Theorem~\ref{TheoremUniInv}.
\begin{Proposition}\label{ExpForm}
The inverse of $1-\Cautv$ has the explicit form
\begin{align}\label{ExpInv}
 (1-\Cautv)^{-1}\colon\ \Ltwo \to \Ltwo, \qquad g \mapsto g + \Cau^-\big(g (\tv-\Id) \tQ_+^{-1}\big)\tQ_-.
\end{align}

\end{Proposition}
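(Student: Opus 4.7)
The plan is a direct verification: given $g \in L^2(\Sigma)$, define $\tmu$ by the right-hand side of \eqref{ExpInv} and show that $(1-\Cautv)\tmu = g$. Combined with the injectivity of $1-\Cautv$ noted before Theorem \ref{UnifLeg} (and with its genuine invertibility provided by Theorem \ref{UnifLeg} for large $n$), this identifies $\tmu$ as $(1-\Cautv)^{-1} g$.

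The engine of the proof is the algebraic identity $\tQ_+^{-1}\tQ_- = \tv^{-1}$, which follows immediately from the Legendre jump $\tQ_+ = \tQ_-\tv$. Concretely, I would set $\nu(s) := g(s)(\tv(s)-\Id)\tQ_+(s)^{-1}$, $N(z) := \Cau(\nu)(z)$, and consider the auxiliary analytic function $K(z) := N(z)\tQ(z)$ on $\C \setminus \Sigma$. Using the Sokhotski--Plemelj identity $N_+ - N_- = \nu$ together with the Legendre jump, the jump of $K$ evaluates to $K_+ - K_- = N_+\tQ_+ - N_-\tQ_- = N_-\tQ_-(\tv-\Id) + \nu\tQ_-\tv = (N_-\tQ_- + g)(\tv - \Id) = \tmu(\tv-\Id)$, where in the penultimate step I use $\nu\tQ_-\tv = g(\tv-\Id)$ (an immediate consequence of $\tQ_+^{-1}\tQ_- = \tv^{-1}$) and recognise $\tmu = g + N_-\tQ_-$ as precisely the formula in \eqref{ExpInv}. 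Since $N(z)\to 0$ and $\tQ(z)\to \Id$ at infinity, $K(z) \to 0$, so the Sokhotski--Plemelj formula reconstructs $K(z) = \Cau(\tmu(\tv-\Id))(z)$. Taking the $-$ boundary value on $\Sigma$ then gives $K_- = \Cautv\tmu$, while by definition $K_- = N_-\tQ_- = \tmu - g$. Subtracting delivers $(1-\Cautv)\tmu = g$, as required.

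The main obstacle is the integrability bookkeeping at the endpoints $\pm 1$. Because $\tQ$ has only logarithmic singularities there (property (iv) of the Legendre RH problem) and $\det \tQ \equiv 1$, the cofactor formula implies that $\tQ_\pm^{-1}$ also carries at most logarithmic singularities, hence lies in every $L^p(\Sigma)$ with $p < \infty$ but not in $L^\infty(\Sigma)$. Consequently $\nu = g(\tv-\Id)\tQ_+^{-1}$ sits only in $L^{2-\varepsilon}(\Sigma)$ for every $\varepsilon > 0$ a priori, so both the boundedness of $N_\pm$ and the Plemelj--Sokhotski reconstruction of $K$ from its jump must be legitimated in a weighted-$L^p$ setting, appealing to Theorem \ref{Muckenhoupt} with a Muckenhoupt weight controlling the endpoint behaviour of $\tQ_\pm^{\pm 1}$. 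A cleaner, more conceptual route is to recast the computation via Theorem \ref{TheoremInh}: the inhomogeneous RH problem of the first kind with jump $\tv$ and inhomogeneity $g$ has the unique solution $m_\pm = \Cau^\pm(f)+g$ with $m_- = (1-\Cautv)^{-1}g$; one then verifies that $(m-g)\tQ^{-1}$ extends analytically across $\Sigma$ with the correct behaviour at infinity, yielding the explicit representation \eqref{ExpInv} after taking the $-$ boundary value.
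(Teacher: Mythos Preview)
Your algebra is correct, and your secondary route via Theorem~\ref{TheoremInh} is exactly the paper's approach: the paper starts from the known solution $\psi = (1-\Cautv)^{-1}g \in L^2(\Sigma)$ (guaranteed by Theorem~\ref{UnifLeg}), builds $m_\pm = \Cau^\pm(f)+g$ with $f = \psi(\tv-\Id)$, observes that $H := \Cau(f)\tQ^{-1}$ has the jump $g(\tv-\Id)\tQ_+^{-1}$, and applies Sokhotski--Plemelj in $L^{2-\varepsilon}$ to recover $\psi = g + \Cau^-\big(g(\tv-\Id)\tQ_+^{-1}\big)\tQ_-$. The $L^2$-membership of the right-hand side then follows \emph{a posteriori} because it equals $\psi$.

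Your primary route (direct verification) is the dual computation: you define $\tmu$ by the formula and check $(1-\Cautv)\tmu = g$. This works algebraically, but the integrability closing is slightly weaker than you suggest. Your identity $(1-\Cautv)\tmu = g$ is established only in $L^{2-\varepsilon}$, since $\tmu = g + N_-\tQ_-$ is a priori only in $L^{2-\varepsilon}$. To conclude $\tmu = (1-\Cautv)^{-1}g$ in $L^2$ you would need injectivity of $1-\Cautv$ on $L^{2-\varepsilon}$, which is true (the Liouville argument of Proposition~\ref{LegUniq} goes through with $L_\pm \in L^{2-\varepsilon}$, since $\Sigma$ is bounded and the products land in $L^1$) but is not stated in the paper and you do not spell it out. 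The paper's direction sidesteps this: by starting from the $L^2$ object $\psi$ and deriving the formula, the $L^2$-membership of the formula is automatic. So both routes are valid; the paper's is marginally cleaner on the function-space bookkeeping, while yours makes the role of the identity $\tQ_+^{-1}\tQ_- = \tv^{-1}$ more transparent.
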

\begin{proof}
From Theorem \ref{TheoremInh}, we know that for $g \in \Ltwo$, the (unique) solvability of the equation
\begin{align}\label{SingIE}
(1-\Cautv)\psi = g, \quad g\in \Ltwo,
\end{align}
in $\Ltwo$ is equivalent to the (unique) solvability of the following inhomogeneous RH problem.

\subsection*{Inhomogeneous Legendre RH problem}
For a given $g \in \Ltwo$, one seeks an $f \in \Ltwo$, such that $m_\pm = \Cau^\pm(f)+g$ satisfies the jump relation:
\begin{align*}
 m_+(s) &= m_-(s) \tv(s), \qquad s \in \Sigma.
\end{align*}

Note that Theorem \ref{UnifLeg} together with Corollary~\ref{CorHn} imply that there exists a constant $c$ independent of $n\in \mathbb{N}$ and $g\in \Ltwo$ such that the inhomogeneous Legendre RH problem has a unique solution $m_\pm$ with $\Vert m_- \Vert_{\Ltwo} \leq c\Vert g \Vert_{\Ltwo}$. We shall briefly recall the exact relation between \eqref{SingIE} and the inhomogeneous Legendre RH problem (cf.\ \cite[Section~2]{DZSob}), as it will be needed later in the proof. First, note that if we have a solution $m_\pm$ to the inhomogeneous Legendre RH problem, we must have $f = m_+ - m_- = m_-(\tv-\Id)$. If we now set $\psi := m_- = \Cau^-(f)+g \in \Ltwo$, it follows that
\begin{align*}
 (1-\Cautv)\psi &= \psi - \Cau^-(\psi (\tv-\Id))= \Cau^-(f) + g - \Cau^-(m_-(\tv-\Id)))= g.
\end{align*}
On the other hand, having a solution $\psi$ to the integral equation \eqref{SingIE}, we can define $m_\pm := \Cau^\pm(\psi (\tv-\Id)) + g$ and compute, using the Sokhotski--Plemelj (SP) formula,
\begin{align}\label{proofm}
 m_+ &= \Cau^+(\psi (\tv-\Id)) + g\overset{\text{SP}}{=} \psi (\tv-\Id) + \Cautv(\psi) + g= \psi \tv= m_- \tv,
\end{align}
as $m_- = \Cautv \psi + g = \psi$.
We can now derive an expression for the operator $(1 - \Cautv)^{-1}$. Assume $g \in \Ltwo$ is given and let $\psi \in \Ltwo$ be the unique solution of $(1-\Cautv)\psi = g$. Then $m_\pm = \Cau^\pm(f)+g$ with $f = \psi (\tv-\Id)$ solves the corresponding inhomogeneous RH problem, as we have seen in equation~\eqref{proofm}. We want to find an expression for $\psi = m_-$ in terms of $g$. To derive \eqref{ExpInv}, we start with
\begin{align}
& m_+ = m_- \tv, \qquad
 \Cau^+(f) + g = (\Cau^-(f) + g)\tilde v,\qquad \big(\Cau^+(f) + g\big)\tQ_+^{-1} = (\Cau^-(f) + g)\tQ_-^{-1},\nonumber
 \\
& \Cau^+(f)\tQ_+^{-1} - \Cau^-(f)\tQ_-^{-1} = g\big(\tQ_-^{-1}-\tQ_+^{-1}\big).\label{derInv}
 \end{align}
Observe that the left- and right-hand sides in the last line might not lie in $L^2(\Sigma)$. However, using property (iv) of $\tQ$ from the RH problem (see Section~\ref{LegRHP}), we see that they lie in $L^{2-\epsilon}(\Sigma)$ for any $\epsilon \in (0,1)$. Hence, if we define $H = \Cau(f) \tQ^{-1}$, we see that $H$ is analytic in $\C \setminus \Sigma$, satisfies $H_\pm \in L^{2-\epsilon}(\Sigma)$ and vanishes at $\infty$. By the Sokhotski--Plemelj formula, it follows that $H = \Cau(H_+-H_-)$. Thus, applying $\Cau$, which is understood to act on the space $L^{2-\epsilon}(\Sigma)$, in the last line of \eqref{derInv}, we obtain
 \begin{align*}
& \Cau(f) \tQ^{-1} = \Cau\big(\underbrace{g\big(\tQ_-^{-1}-\tQ_+^{-1}\big)}_{g(\tilde v-\Id) \tQ_+^{-1}}\big),\qquad
 \Cau(f) = \Cau\big(g (\tv-\Id) \tQ_+^{-1}\big)\tQ,
 \\
& \Cau^-(f) + g = g + \Cau^{-}\big(g (\tv-\Id) \tQ_+^{-1}\big)\tQ_-,\qquad
 \psi = g + \Cau^-\big(g (\tv-\Id) \tQ_+^{-1}\big)\tQ_-.
\end{align*}
Note here that $\Cau^-\big(g (\tv-\Id) \tQ_+^{-1}\big)\tQ_-$ is a priori a function in $L^{2-\epsilon}(\Sigma)$, however as $\psi = (1-\Cautv)^{-1}g \in L^2(\Sigma)$ by Theorem \ref{UnifLeg}, we conclude that indeed $\Cau^-\big(g (\tv-\Id) \tQ_+^{-1}\big)\tQ_- = \psi - g \in \Ltwo$.

We have thus proved that $(1-\Cautv)^{-1}$ must indeed have the formed stated in \eqref{ExpInv}.
\end{proof}

\section[Local parametrices around the point z = -1]{Local parametrices around the point $\boldsymbol{z = -1}$}\label{LocPar}
In the following, we will use appropriate local parametrices $\bP$, $\hP$ and $\tP$ to invert the three RH problems introduced in Section~\ref{RHProblems}, locally near $z = -1$. We denote the modified RH problems with ${}^\star$RH.

The explicit construction of the local parametrices is taken from \cite[equation~(6.50)]{KMVV} and can be given in terms of Bessel functions and the Szeg\H{o} functions corresponding to the three weights. To define these parametrices, we first need a local $n$-dependent change of variables~$z\to\zeta$. Following \cite[Section~6]{KMVV}, we choose a sufficiently small neighbourhood $U$ of the point $z = -1$ and define the mapping
\begin{align}\label{defXi}
 \xi\colon\ U \to \C, \qquad z \mapsto \xi(z) = \frac{\log^2(-\phi(z))}{4}.
\end{align}
Note that for $s\in U\cap[-1,1]$, $
\log^2(-\phi_+(s)) = \log^2\bigl(-\phi^{-1}_-(s)\bigr) = \log^2(-\phi_-(s))$ implying that $\xi$ is indeed well-defined and holomorphic.

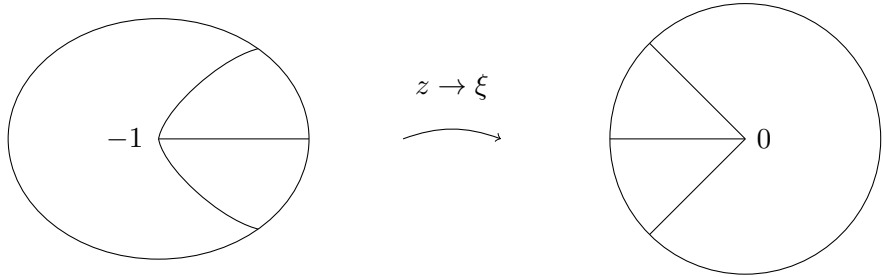
\begin{figure}[th]
 \centering\begin{tikzpicture}
 \draw (-4.2,0) circle [x radius=2cm, y radius=1.6cm];
 \draw [black] plot [smooth, tension=1.3] coordinates {(-4.2,0) (-3.7,0.7) (-2.875,1.2)};
 \draw [black] plot [smooth, tension=1.3] coordinates {(-4.2,0) (-3.7,-0.7) (-2.875,-1.2)};
 \draw [black] (-4.2,0) -- (-2.2,0);
 \node at (-4.65,0) {$-1$};

 \node at (-0.3,0.7) {$z \to \xi$};
 \draw [->] (-0.95,0) to [out=20,in=160] (0.35,0);

 \draw (3.6,0) circle [x radius=1.8cm, y radius=1.8cm];
 \draw [black] (3.6,0) -- (2.33,1.27);
 \draw [black] (3.6,0) -- (2.33,-1.27);
 \draw [black] (3.6,0) -- (1.8,0);
 \node at (3.85,0) {$0$};

 \end{tikzpicture}
 \caption{The change of variables $z \to \xi$.}\label{zToXi}
\end{figure}

Now using \eqref{phiEndpoints}, we see that $\xi'(-1) = -1/2$, meaning that for $U$ sufficiently small, $\xi$ will define a biholomorphic mapping between $U$ and its image~$\xi(U)$. Introduce now $\zeta = \zeta^{(n)}(z) = n^2 \xi(z)$ for $z \in U$ together with \smash{$\Sigma_\Psi^{(n)} = n^2 \xi(U\cap\Sigma)$}. We can assume that $\Sigma$ has been chosen such that~\smash{$\Sigma_\Psi^{(n)}$} can be extended to \smash{$\Sigma_\Psi\supset \Sigma_\Psi^{(n)}$} consisting of three straight line segments $\gamma_i$, $i = 1,2,3$, originating from $\zeta = 0$ at the angles $\pm \frac{2\pi}{3}$ and $\pi$, see Figure~\ref{besselcontour}. Accordingly, we will regard $\zeta$ as a~variable in the whole complex plane.

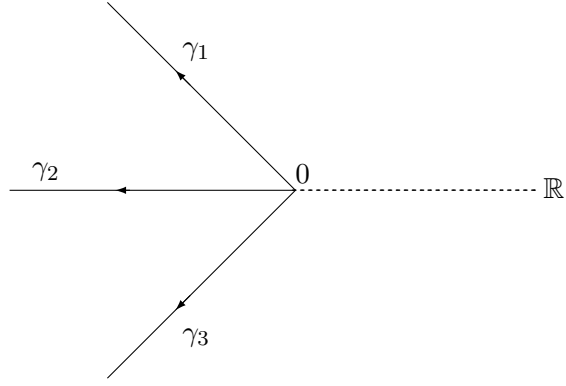
\begin{figure}[th]
\centering
\begin{picture}(7,5.2)
\put(-0.3,2.5){\line(1,0){3.8}}
\put(1.3,2.5){\vector(-1,0){0.2}}

\put(1,0){\line(1,1){2.5}}
\put(2.1,1.1){\vector(-1,-1){0.2}}

\put(1,5){\line(1,-1){2.5}}
\put(2.1,3.9){\vector(-1,1){0.2}}

\put(2,4.3){$\gamma_1$}
\put(0,2.7){$\gamma_2$}
\put(2,0.5){$\gamma_3$}

\put(6.8,2.4){$\R$}
\put(3.5,2.6){$0$}

\curvedashes{0.05,0.05}
\curve(3.4,2.5, 6.7,2.5)
\end{picture}
\caption{Contour for the local parametrix problems in the $\zeta$-plane.}\label{besselcontour}
\end{figure}

Next, we shall define two piecewise holomorphic functions $\Psi_\nu\colon \C \setminus \Sigma_\Psi \to \C^{2\times2}$ for $\nu = 0, 1$ (see \cite[equation~(6.51)]{KMVV}):
\begin{align*}
 \Psi_\nu(\zeta) =
 \begin{cases}
 \begin{pmatrix}
 I_\nu\big(2\zeta^{1/2}\big) & -\frac{\I}{\pi} K_\nu\big(2\zeta^{1/2}\big)
 \\
 -2\pi \I \zeta^{1/2} I_\nu'\big(2\zeta^{1/2}\big) & -2 \zeta^{1/2} K_\nu'\big(2\zeta^{1/2}\big)
 \end{pmatrix}, \qquad |\arg \zeta | < \frac{2\pi}{3},\vspace{1mm}
 \\
 \begin{pmatrix}
 \frac{1}{2}H^{(1)}_\nu\big(2(-\zeta)^{1/2}\big) & -\frac{1}{2}H^{(2)}_\nu\big(2(-\zeta)^{1/2}\big)
 \\
 -\pi\zeta^{1/2}(H^{(1)}_\nu)'\big(2(-\zeta)^{1/2}\big) & \pi\zeta^{1/2}(H^{(2)}_\nu)'\big(2(-\zeta)^{1/2}\big)
 \end{pmatrix}\E^{\frac{1}{2}\pi\I\nu \sigma_3},
 \\
 \qquad\frac{2\pi}{3} < \arg \zeta < \pi,\vspace{1mm}
 \\
 \begin{pmatrix}
 \frac{1}{2}H^{(2)}_\nu\big(2(-\zeta)^{1/2}\big) & \frac{1}{2}H^{(1)}_\nu\big(2(-\zeta)^{1/2}\big)
 \\
 \pi\zeta^{1/2}(H^{(2)}_\nu)'\big(2(-\zeta)^{1/2}\big) & \pi\zeta^{1/2}(H^{(1)}_\nu)'\big(2(-\zeta)^{1/2}\big)
 \end{pmatrix}\E^{-\frac{1}{2} \pi\I\nu \sigma_3},
 \\
 \qquad-\pi < \arg \zeta < -\tfrac{2\pi}{3}.
 \end{cases}
\end{align*}
Here the functions $I_\nu$, $K_\nu$ with $\nu \in \C$ are the familiar \emph{modified Bessel functions}. Generally, these are holomorphic functions in the domain $z\in \C\setminus (-\infty, 0]$ and have a branch cut along the negative real axis. In the special case $\nu \in \Z$, $I_\nu$ is entire.

Analogously, the functions $H^{(1)}_\nu$, $H^{(2)}_\nu$ with $\nu \in \C$ are holomorphic for $z\in\C \setminus (-\infty, 0]$ and have a branch cut on the negative real axis. They are the \emph{Bessel functions of the third kind}, also known as the \emph{Hankel functions}. Properties of these function can be found in \cite[Section~10]{DLMF}. In the following, we will be interested in the behaviour of $\Psi_\nu$ as $\zeta \to 0$, which can be deduced from the following lemma:
\begin{Lemma}[{\cite[Section~10]{DLMF}}]
The following asymptotic formulas hold uniformly for any path $\zeta \to 0$:
\begin{align}
& I_0\big(2\zeta^{1/2}\big), K_0\big(2\zeta^{1/2}\big), H^{(1)}_0\big(2(-\zeta)^{1/2}\big), H^{(2)}_0\big(2(-\zeta)^{1/2}\big) = O(\log |\zeta|),\nonumber
 \\
 &I_0'\big(2\zeta^{1/2}\big), K_0'\big(2\zeta^{1/2}\big), \big(H^{(1)}_0\big)'\big(2(-\zeta)^{1/2}\big), \big(H^{(2)}_0\big)'\big(2(-\zeta)^{1/2}\big) = O\left(\frac{1}{|\zeta|^{1/2}}\right),\label{BesselAsym0}
\end{align}
 and
\begin{align}
 &I_1\big(2\zeta^{1/2}\big), K_1\big(2\zeta^{1/2}\big), H^{(1)}_1\big(2(-\zeta)^{1/2}\big), H^{(2)}_1\big(2(-\zeta)^{1/2}\big) = O\left(\frac{1}{|\zeta|^{1/2}}\right),\nonumber
 \\
 &I_1'\big(2\zeta^{1/2}\big), K_1'\big(2\zeta^{1/2}\big), \big(H^{(1)}_1\big)'\big(2(-\zeta)^{1/2}\big), \big(H^{(2)}_1\big)'\big(2(-\zeta)^{1/2}\big) = O\left(\frac{1}{|\zeta|}\right).\label{BesselAsym1}
\end{align}
The following asymptotic formulas hold uniformly for $\zeta \to \infty$ in the prescribed sectors for any~$\delta >0$ and $\nu = 0,1$:
\begin{align}
& I_\nu\big(2\zeta^{1/2}\big)\E^{-2\zeta^{1/2}}, I_\nu'\big(\zeta^{1/2}\big)\E^{-2\zeta^{1/2}} = O\left(\frac{1}{|\zeta|^{1/4}} \right),\nonumber
 \\
 &K_\nu\big(2\zeta^{1/2}\big)\E^{2\zeta^{1/2}}, K_\nu'\big(2\zeta^{1/2}\big)\E^{2\zeta^{1/2}} = O\left(\frac{1}{|\zeta|^{1/4}} \right),
 \qquad|\arg \zeta|<\pi-\delta,\nonumber
 \\
 &H^{(1)}_\nu\big(2(-\zeta)^{1/2}\big)\E^{-2\I(-\zeta)^{1/2}}, (H^{(1)}_\nu)'\big(2(-\zeta)^{1/2}\big)\E^{-2\I(-\zeta)^{1/2}} = O\left(\frac{1}{|\zeta|^{1/4}} \right), \qquad \arg \zeta \not = 0,\nonumber
 \\
 &H^{(2)}_\nu\big(2(-\zeta)^{1/2}\big)\E^{2\I(-\zeta)^{1/2}}, (H^{(2)}_\nu)'\big(2(-\zeta)^{1/2}\big)\E^{2\I(-\zeta)^{1/2}} = O\left(\frac{1}{|\zeta|^{1/4}} \right), \qquad \arg \zeta \not = 0.\label{BesselInfinity}
\end{align}
In the formulas \eqref{BesselAsym0}--\eqref{BesselInfinity}, $(\cdot)^{1/2}$ denotes the principal branch.
\end{Lemma}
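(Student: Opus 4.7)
The lemma is a compilation of standard small-argument and large-argument asymptotics for modified Bessel and Hankel functions, applied with the specific argument $w = 2\zeta^{1/2}$ (or $w = 2(-\zeta)^{1/2}$). The plan is therefore to invoke the classical series expansions near $w = 0$ and the Poincaré-type asymptotic expansions for $|w| \to \infty$, substituting and tracking the resulting powers of $|\zeta|$.

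For the behaviour as $\zeta \to 0$, I would proceed case by case. For $\nu = 0$: from the entire series $I_0(w) = \sum_{k \geq 0} (w/2)^{2k}/(k!)^2$ and the expansion $K_0(w) = -\log(w/2) - \gamma_E + O(w^2 \log w)$, together with the standard connection formulas $H^{(1,2)}_0(w) = J_0(w) \pm \I Y_0(w)$ and the logarithmic singularity of $Y_0$, one reads off that each of the four functions in the first line of \eqref{BesselAsym0} is $O(\log|\zeta|)$ after substituting $w = 2\zeta^{1/2}$. Their derivatives, via the identity $Z_0'(w) = -Z_1(w)$ for each Bessel-type function $Z_0$, reduce to the $\nu = 1$ case, where $K_1(w) \sim 1/w$ and $Y_1(w) \sim -2/(\pi w)$ give the bound $O(|\zeta|^{-1/2})$. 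This simultaneously establishes the first line of \eqref{BesselAsym1}; the second line of \eqref{BesselAsym1} then follows from $Z_1'(w) = Z_0(w) - Z_1(w)/w$, whose dominant contribution at small $w$ comes from $Z_1(w)/w \sim 1/w^2$, giving $O(|\zeta|^{-1})$.

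For the behaviour as $\zeta \to \infty$, I would invoke the classical Hankel-type asymptotic expansions (see \cite[Section~10]{DLMF}):
\begin{align*}
I_\nu(w) &\sim \frac{\E^{w}}{\sqrt{2\pi w}}, & &|\arg w| < \tfrac{\pi}{2}-\delta,\\
K_\nu(w) &\sim \sqrt{\frac{\pi}{2w}}\,\E^{-w}, & &|\arg w| < \tfrac{3\pi}{2}-\delta,\\
H^{(1)}_\nu(w) &\sim \sqrt{\frac{2}{\pi w}}\,\E^{\I(w-\nu\pi/2-\pi/4)}, & &-\pi + \delta < \arg w < 2\pi - \delta,\\
H^{(2)}_\nu(w) &\sim \sqrt{\frac{2}{\pi w}}\,\E^{-\I(w-\nu\pi/2-\pi/4)}, & &-2\pi+\delta < \arg w < \pi - \delta,
\end{align*}
together with the corresponding expansions for the derivatives (which differ only by a sign in the leading exponential factor). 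Substituting $w = 2\zeta^{1/2}$ (respectively $2(-\zeta)^{1/2}$) converts the factor $|w|^{-1/2}$ into $|\zeta|^{-1/4}$, and the sectors in the $w$-plane map to the advertised sectors in the $\zeta$-plane. The exponential factors $\E^{\pm 2\zeta^{1/2}}$ and $\E^{\pm 2\I(-\zeta)^{1/2}}$ are precisely the ones extracted in \eqref{BesselInfinity}, so what remains is the prefactor of order $|\zeta|^{-1/4}$, as claimed.

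The only mild subtlety lies in tracking branch choices: one must verify that the principal branches of $\zeta^{1/2}$ and $(-\zeta)^{1/2}$ entering the definitions of $\Psi_\nu$ land in the sectors where each of the expansions above is valid. This is a straightforward case check for the three sectors $|\arg\zeta|<\tfrac{2\pi}{3}$, $\tfrac{2\pi}{3}<\arg\zeta<\pi$, and $-\pi<\arg\zeta<-\tfrac{2\pi}{3}$ of Figure~\ref{besselcontour}, and constitutes the main bookkeeping step; no genuine analytic obstacle arises beyond this.
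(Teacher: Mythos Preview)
The paper does not give a proof of this lemma at all: it is stated with a citation to \cite[Section~10]{DLMF} and used as a black box. Your sketch is a correct and standard way to read off the claimed bounds from the DLMF small- and large-argument expansions, so there is nothing to compare against and no gap to flag.
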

In \cite[Section~6]{KMVV}, it is shown that $\Psi_\nu$ satisfies the following jump conditions across the contours~$\gamma_i$:
\begin{align*}
 \Psi_{\nu,+}(\zeta) = \Psi_{\nu,-}(\zeta)
 \begin{cases}
 \begin{pmatrix}
 1 & 0
 \\
 \E^{\nu\pi\I} & 1
 \end{pmatrix}, &\zeta \in \gamma_1,\vspace{1mm}
 \\
 \begin{pmatrix}
 0 & 1
 \\
 -1 & 0
 \end{pmatrix}, &\zeta \in \gamma_2,\vspace{1mm}
 \\
 \begin{pmatrix}
 1 & 0
 \\
 \E^{-\nu\pi\I} & 1
 \end{pmatrix}, &\zeta \in \gamma_3.
 \end{cases}
\end{align*}
In the following, we will use $\Psi_\nu$, $\nu = 0,1$ to write down three local parametrices $P$, $\hP$, $\tP$ around~$z = -1$ for the three RH problems defined in Section \ref{RHProblems} (see \cite[equation~(6.52)]{KMVV}):
\begin{align*}
& \bP(z)= E(z)(2\pi n)^{\sigma_3/2} \Psi_1\big(n^2 \xi(z)\big) [-\phi(z)]^{-n\sigma_3}\left(\frac{F(z)}{W(z)}\right)^{\sigma_3},
 \\
& \hP(z)= \widehat E(z)(2\pi n)^{\sigma_3/2} \Psi_1\big(n^2 \xi(z)\big) [-\phi(z)]^{-n\sigma_3}\left(\frac{\hF(z)}{\widehat W(z)}\right)^{\sigma_3}, \qquad z \in U \setminus \Sigma,
 \\
& \tP(z)= \widetilde E(z)(2\pi n)^{\sigma_3/2} \Psi_0\big(n^2 \xi(z)\big) [-\phi(z)]^{-n\sigma_3}.
\end{align*}
Here $W = \sqrt{-w}$, $\widehat W = \sqrt{-\hw}$ are chosen to have a branch cut on $(-1,\infty)\cap U$ and to be positive on $(-\infty, -1)\cap U$. The matrix-valued functions $E$, $\widehat E$ and $\widetilde E$ are in fact holomorphic for $z$ in $U$. More explicitly, we have (see \cite[equation~(6.53)]{KMVV})
\begin{align}\label{formulaE}
 E(z) = N(z)\left(\frac{W(z)}{F(z)}\right)^{\sigma_3}\frac{1}{\sqrt{2}}
 \begin{pmatrix}
 1 & \I
 \\
 \I & 1
 \end{pmatrix}\xi(z)^{\sigma_3/4}
\end{align}
which is in fact holomorphic for $z \in U$.
 Here $N$ is the outer parametrix solution
 \begin{align}\label{OuterParametrix}
 N(z) = \begin{pmatrix}
\dfrac{a(z)+a(z)^{-1}}{2} && \dfrac{a(z)- a(z)^{-1}}{2\I}
\\
\dfrac{a(z)- a(z)^{-1}}{-2\I} && \dfrac{a(z)+a(z)^{-1}}{2}
\end{pmatrix},
\end{align}
where
\begin{align*}
a(z) = \left( \dfrac{z-1}{z+1} \right)^{1/4}
\end{align*}
with a branch cut on $(-1,1)$ and $a(\infty) = 1$. Similar formulae can be obtained for $\widehat E$ and $\widetilde E$ by substituting in \eqref{formulaE} $\hF$, $\widehat W$ and $\widetilde F \equiv 1$, $\widetilde W \equiv 1$, respectively. Crucially however, the outer parametrix $N$ is the same for all three problems. One can check that the determinants of all three parametrices are constant equal to $1$ inside $U$, cf.\ \cite[Section~7]{KMVV}. Furthermore, $E$, $\widehat E$ and~$\widetilde E$ are analytic and bounded in $U$, the singularity of $a(z)$ at $z =-1$ being compensated by the factor $\xi(z)^{\sigma_3/4}$.
\begin{Lemma}\label{PropP}
The matrix-valued functions $\bP$, $\hP$ and $\tP$ defined in $U \setminus \Sigma$, satisfy the following conditions:
\begin{enumerate}[$(i)$]\itemsep=0pt
 \item For $s \in U\cap \Sigma$,
 \begin{align} \label{ParametrixJump}
 \bP_+(s) = \bP_-(s) \bv(s), \qquad
 \hP_+(s) = \hP_-(s) \hv(s), \qquad
 \tP_+(s) = \tP_-(s) \tv(s).
 \end{align}
 \item For $s \in \partial U$ $($see \emph{\cite[equation~(6.41)]{KMVV}}$)$,
 \begin{align}\label{ParaMatchingCond}
 \bP(s), \hP(s), \tP(s) = N(s) + O\big(n^{-1}\big).
 \end{align}
\item For $z \in U$, we have uniformly $($see \emph{\cite[equation~(7.10)]{KMVV}}$)$
\begin{align} \label{ParaConv}
 \tQ(z) \big[\tP(z)\big]^{-1} = \Id + O\big(n^{-1}\big), \qquad \hQ(z) \big[\hP(z)\big]^{-1} = \Id + O\big(n^{-1}\big).
\end{align}
\item For $z \in U$,
\begin{align}\label{ParaBehaviour}
 \bP(z), \hP(z) = O\big(\max\big\lbrace |z+1|^{-1/4}, n^{-1/2} |z+1|^{-1/2}\big\rbrace\big),
\end{align}
and
\begin{align}\label{LegendreBehaviour}
 \tP(z) = O\big(|z+1|^{-1/4}\big)
\end{align}
uniformly as $n \to \infty$.
\end{enumerate}
\end{Lemma}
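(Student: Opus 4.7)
The proof of Lemma~\ref{PropP} reduces to unpacking the explicit factor structure of $\bP$, $\hP$, $\tP$ and invoking, respectively, the jump relations, the large-argument asymptotics, and the small-argument asymptotics of $\Psi_\nu$. The argument closely tracks \cite[Sections~6--7]{KMVV}; only a few steps require adjustment to accommodate the Szeg\H{o} factors $F$, $\hF$ and the logarithmic nature of $w$.

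For (i), since $E$, $\widehat E$, $\widetilde E$ are holomorphic on $U$ and $(2\pi n)^{\sigma_3/2}$ is constant, the jumps of $\bP$ (and analogously of $\hP$, $\tP$) across $\Sigma\cap U$ arise only from the Bessel jumps of $\Psi_\nu$ on $\Sigma_\Psi$, conjugated on the right by $[-\phi]^{-n\sigma_3}(F/W)^{\sigma_3}$, together with the jumps of $[-\phi]^{-n\sigma_3}$ and $(F/W)^{\sigma_3}$ themselves on $(-1,1)\cap U$. A direct computation then reproduces the jump matrices $\bv$, $\hv$, $\tv$ from \eqref{jumpMatrixV}--\eqref{jumpMatrixhV}. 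For (ii), I would substitute the asymptotics \eqref{BesselInfinity} into the definitions and use the identity $\E^{\pm 2\zeta^{1/2}} = [-\phi(z)]^{\pm n}$ (a direct consequence of $\zeta = \frac{n^2}{4}\log^2(-\phi(z))$): the Bessel exponentials exactly cancel the conjugating factor $[-\phi]^{-n\sigma_3}$. What remains is a bounded diagonalising matrix which, when multiplied by $E$ (tailored to this purpose in \eqref{formulaE}), yields $N + O(n^{-1})$ uniformly on $\partial U$.

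For (iii), I would check that the ratios $\hQ\hP^{-1}$ and $\tQ\tP^{-1}$ are analytic in $U\setminus\Sigma$ and, by (i), extend across $\Sigma\cap U$; the combination of the explicit $\Psi_\nu$ structure with the $L^2$-type control of $\hQ$, $\tQ$ near $z=-1$ rules out isolated singularities at that point. On $\partial U$, the global outer approximation from \cite[Section~7]{KMVV} together with the matching condition \eqref{ParaMatchingCond} yields $\hQ\hP^{-1}, \tQ\tP^{-1} = \Id + O(n^{-1})$. The maximum modulus principle propagates this estimate to all of $U$, giving \eqref{ParaConv}.

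For (iv), which is the most delicate step, the key input is the $\zeta\to 0$ behaviour of $\Psi_\nu$. From \eqref{BesselAsym1}, every entry of $\Psi_1$ is $O(|\zeta|^{-1/2})$ uniformly, as the prefactors $\zeta^{1/2}$ in the bottom row absorb one power of the singularity. Since $\xi(z)\sim -(z+1)/2$, this gives $\Psi_1(n^2\xi(z)) = O(n^{-1}|z+1|^{-1/2})$. Left multiplication by $(2\pi n)^{\sigma_3/2}$ scales the worst row by $\sqrt{n}$, producing $O(n^{-1/2}|z+1|^{-1/2})$. The conjugators $[-\phi]^{-n\sigma_3}$ and $(F/W)^{\sigma_3}$ stay bounded near $z=-1$ (using $|\phi|\to 1$ and Proposition~\ref{F^2/w}), while $E(z)$ inherits from $N(z)$ the singularity $O(|z+1|^{-1/4})$ via \eqref{formulaE}, which is dominant as soon as the Bessel factor becomes bounded. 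Taking the envelope over the two regimes yields \eqref{ParaBehaviour}. For $\tP$, substituting the weaker $O(\log|\zeta|)$ singularity of $\Psi_0$ from \eqref{BesselAsym0}, which is subsumed by $|z+1|^{-1/4}$, produces \eqref{LegendreBehaviour}. The main obstacle is precisely this bookkeeping: the two terms in the max balance at $|z+1|\sim n^{-2}$, the scale at which the Bessel argument $\zeta = n^2\xi(z)$ transitions from small to order one, and the estimate must be shown uniformly in $n$ across this crossover.
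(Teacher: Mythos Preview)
Your handling of (i)--(iii) is fine and matches the paper's approach (which largely defers to \cite[Sections~6--7]{KMVV}). The gap is in your argument for (iv), and it concerns two related mis-statements.

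First, the prefactor $E(z)$ (and likewise $\widehat E$, $\widetilde E$) is \emph{holomorphic and bounded} on $U$: the $|z+1|^{-1/4}$ singularity of $N(z)$ in \eqref{formulaE} is exactly cancelled by the factor $\xi(z)^{\sigma_3/4}$, as the paper notes just below \eqref{OuterParametrix}. So $E$ cannot be the source of the $|z+1|^{-1/4}$ term in \eqref{ParaBehaviour}. Second, the factor $[-\phi(z)]^{-n\sigma_3}$ is \emph{not} uniformly bounded in $n$ once $|z+1|\gtrsim n^{-2}$: since $|\phi(z)|>1$ off $[-1,1]$, the $(2,2)$-entry $[-\phi(z)]^{n}$ grows exponentially in $n$ there. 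Your assertion that ``$[-\phi]^{-n\sigma_3}$ stays bounded near $z=-1$'' only holds on the shrinking disc $|z+1|\lesssim n^{-2}$.

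The correct mechanism (and this is exactly what the paper does) is to split according to the Bessel argument $\zeta=n^2\xi(z)$. For $|z+1|\gtrsim n^{-2}$ one uses the \emph{large}-$\zeta$ asymptotics \eqref{BesselInfinity}: the exponentials $\E^{\pm 2\zeta^{1/2}}$ in the Bessel factors cancel $[-\phi(z)]^{\mp n}$ exactly (the same cancellation you correctly invoke in (ii)), and the remaining algebraic factors $|\zeta|^{\mp 1/4}$, after conjugation by $(2\pi n)^{\sigma_3/2}$, give precisely $O(|z+1|^{-1/4})$ with no $n$-dependence. For $|z+1|\lesssim n^{-2}$ one uses the small-$\zeta$ asymptotics \eqref{BesselAsym1} as you do, and here $[-\phi]^{-n\sigma_3}=O(1)$ is legitimate, producing $O(n^{-1/2}|z+1|^{-1/2})$. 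Since $E$ is bounded it plays no role, and the two regimes glue at $|z+1|\sim n^{-2}$ to give \eqref{ParaBehaviour}. The same dichotomy, with \eqref{BesselAsym0} in place of \eqref{BesselAsym1} for the inner regime, yields \eqref{LegendreBehaviour}; the extra step there is to check that the logarithmic singularity $n^{1/2}|\log(n^2(z+1))|$ is dominated by $|z+1|^{-1/4}$ on $|z+1|\lesssim n^{-2}$.
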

\begin{proof}
A detailed derivation of the local parametrices can be found in \cite[Section~6]{KMVV} together with a proof of properties (i), (ii), for property (iii) see \cite[Section~7]{KMVV}. Note that while the weight function $w$ does not fall into the class of weight functions considered in \cite{KMVV} due to the logarithmic singularity at $z = +1$, the local construction and estimation of the left parametrix near $z = -1$ found therein remains unchanged.

Regarding point (iv), we start with the properties of $\bP$ and $\hP$. Noting that $E(z)$ and~$\widehat E(z)$ are holomorphic, $n$-independent and have unit determinants, hence it is enough to consider $E^{-1}(z)\bP(z)$ and $\widehat E^{-1}(z)\hP(z)$ instead of $\bP(z)$ and $\hP(z)$ in \eqref{ParaBehaviour}. Analogously, it follows from~\eqref{F^2/w at -1} and \eqref{hat F^2/w at -1} that \smash{$\big(\frac{F}{W}\big)^{\sigma_3}$} and \smash{$\big(\frac{\hF}{\widehat W}\big)^{\sigma_3}$} are $n$-independent and bounded in a neighbourhood of $z = -1$, hence they also do not contribute in \eqref{ParaBehaviour}.

It remains to study
\begin{align*}
 (2\pi n)^{\sigma_3/2}\Psi_1\big(n^2 \xi(z)\big)[-\phi(z)]^{-n\sigma_3}
\end{align*}
which is equal to both $E^{-1}(z)\bP(z)\big(\frac{F}{W}\big)^{-\sigma_3}$ and $\widehat E^{-1}(z)\hP(z)\big(\frac{\hF}{\widehat W}\big)^{-\sigma_3}$. It follows from the definition of $\xi$ in \eqref{defXi} that \smash{$[-\phi(z)]^{-n\sigma_3} =\E^{-2\sqrt{n^2 \xi(z)}\sigma_3}$} where the square root has a branch cut along~$z > -1$. Writing $\zeta=n^2 \xi(z)$ and assuming $|z+1| \gtrsim O\big(n^{-2}\big)$, we see that $|\zeta| \gtrsim O(1)$ and using the estimates in \eqref{BesselInfinity} we conclude that
\begin{align}\nonumber
& (2\pi n)^{\sigma_3/2}\Psi_1\big(n^2 \xi(z)\big)[-\phi(z)]^{-n\sigma_3} \\
&\quad= (2\pi n)^{\sigma_3/2}\begin{pmatrix}
 O\big(|\zeta|^{-1/4}\big) & O\big(|\zeta|^{-1/4}\big)
 \\
 O\big(|\zeta|^{1/4}\big) & O\big(|\zeta|^{1/4}\big)
 \end{pmatrix}\nonumber
 \\
 &\quad=
 \begin{pmatrix}
 O\big(|z+1|^{-1/4}\big) & O\big(|z+1|^{-1/4}\big)
 \\
 O\big(|z+1|^{1/4}\big) & O\big(|z+1|^{1/4}\big)
 \end{pmatrix}, \qquad |z+1| \gtrsim O\big(n^{-2}\big).\label{geqn2}
\end{align}
For $|z+1| \lesssim O\big(n^{-2}\big)$, we use the estimates \eqref{BesselAsym1} instead to conclude
\begin{align}
 &(2\pi n)^{\sigma_3/2}\Psi_1\big(n^2 \xi(z)\big)[-\phi(z)]^{-n\sigma_3} \nonumber\\
 &\quad= (2\pi n)^{\sigma_3/2}\begin{pmatrix}
 O\big(|\zeta|^{-1/2}\big) & O\big(|\zeta|^{-1/2}\big)
 \\
 O\big(|\zeta|^{-1/2}\big) & O\big(|\zeta|^{-1/2}\big)
 \end{pmatrix}\nonumber\\
&\quad= \begin{pmatrix}
 O\big(n^{-1/2}|z+1|^{-1/2}\big) & O\big(n^{-1/2}|z+1|^{-1/2}\big)
 \\
 O\big(n^{-3/2}|z+1|^{-1/2}\big) & O\big(n^{-3/2}|z+1|^{-1/2}\big)
 \end{pmatrix}, \qquad z \in |z+1| \lesssim O\big(n^{-2}\big).\label{seqn2}
\end{align}
Note that in this case $[-\phi(z)]^{-n\sigma_3} = O(1)$, hence this term does not contribute. One checks that indeed for $|z+1| \sim n^{-2}$ the bounds in \eqref{geqn2} and \eqref{seqn2} are of the same order.

The proof of \eqref{LegendreBehaviour} works in a similar fashion. Again the holomorphic prefactor $\widetilde E$ can be ignored. For $|z+1|\gtrsim O\big(n^{-2}\big)$, we get as before
\begin{align*}
& (2\pi n)^{\sigma_3/2}\Psi_0\big(n^2 \xi(z)\big)[-\phi(z)]^{-n\sigma_3}
 \\
 &\quad=\begin{pmatrix}
 O\big(|z+1|^{-1/4}\big) & O\big(|z+1|^{-1/4}\big)
 \\
 O\big((|z+1|^{1/4}\big) & O\big((|z+1|^{1/4}\big)
 \end{pmatrix}, \qquad |z+1| \gtrsim O\big(n^{-2}\big).
\end{align*}
However, for $|z+1|\lesssim O\big(n^{-2}\big)$, we get different asymptotics after applying \eqref{BesselAsym0}. We obtain%
\begin{align} \nonumber
 &(2\pi n)^{\sigma_3/2}\Psi_0\big(n^2 \xi(z)\big) [-\phi(z)]^{-n\sigma_3}
 \\\label{LegendreGrowth}
 &\quad= \begin{pmatrix}
 O\big(n^{1/2} \big|\log\big(n^2(z+1)\big)\big|\big) & O\big(n^{1/2} \big|\log \big(n^2(z+1)\big)\big|\big)
 \\
 O\big(n^{-1/2}\big) & O\big(n^{-1/2}\big)
 \end{pmatrix}, \quad |z+1|\lesssim O\big(n^{-2}\big).
\end{align}
Now observe that for $|z+1|\lesssim O\big(n^{-2}\big)$, we have trivially $n^2|z+1|\lesssim O(1)$ and thus
\begin{align*}
 \big|\log\big(n^2(z+1)\big)\big| \lesssim \big|n^2(z+1)\big|^{-1/4}.
\end{align*}
This estimate, together with $n^{-1/2} \lesssim |z+1|^{-1/4}$ \big(in fact $n^{1/2} \lesssim |z+1|^{-1/4}$ holds\big), implies that the matrix entries in \eqref{LegendreGrowth} can be bounded by $O\big(|z+1|^{-1/4}\big)$ uniformly as $n \to \infty$, showing~\eqref{LegendreBehaviour} and finishing the proof.
\end{proof}

The fact that all three parametrices display the same asymptotic behaviour for $|z+1|\gtrsim O\big(n^{-2}\big)$ is consistent with the matching condition \eqref{ParaMatchingCond} which is the same in all three cases. Note that for a fixed $n$, $\tP(z)$ has only a logarithmic singularity near $z = -1$, but the order in~\eqref{LegendreBehaviour} is necessary to obtain a uniform bound for $n\to\infty$.
\begin{Corollary}
For $z$ in a neighbourhood $U_{+1}$ of $+1$, the matrix-valued function $\tQ$ satisfies the asymptotics
\begin{align}\label{LegendreQBehaviour}
 \tQ(z) = O\big(|z-1|^{-1/4}\big)
\end{align}
uniformly for $n\to\infty$. Moreover, $\tQ$, and its boundary values $\tQ_\pm$ on $\Sigma$, are bounded in $\overline{\C} := \C \cup \lbrace \infty \rbrace$ away from small neighbourhoods of $\pm 1$, uniformly for $n\to\infty$.
\end{Corollary}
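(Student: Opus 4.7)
\medskip

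\noindent\emph{Proof proposal.} The plan is to combine Lemma~\ref{PropP} with an analogous local parametrix at $+1$ and the outer parametrix $N$ in order to produce a global approximation of $\tQ$ by the standard small-norm argument; both claims then fall out of the corresponding uniform bounds on the resulting parametrix.

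First I would mirror the construction of $\tP$ at the point $+1$. Since the Legendre weight $\tw\equiv 1$ is analytic and non-zero at $+1$, the corresponding Bessel parametrix $\tP_{+1}$ is built from $\Psi_0$ after a local $n$-dependent change of variables $\zeta = n^{2}\tilde\xi(z)$, where $\tilde\xi$ is the natural conformal map sending $+1$ to the origin (the reflected analog of \eqref{defXi}). The argument from Lemma~\ref{PropP}(iv) carries over verbatim in this new chart and yields
\begin{equation*}
\tP_{+1}(z) = O\bigl(|z-1|^{-1/4}\bigr)
\end{equation*}
uniformly as $n\to\infty$, together with the matching condition $\tP_{+1}(s) = N(s)+O\bigl(n^{-1}\bigr)$ on $\partial U_{+1}$.

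Next I would set up the small-norm reduction. Define the global parametrix
\begin{equation*}
M(z) :=
\begin{cases}
\tP(z), & z\in U,\\
\tP_{+1}(z), & z\in U_{+1},\\
N(z), & z\in\overline{\C}\setminus(U\cup U_{+1}),
\end{cases}
\end{equation*}
and set $R(z) := \tQ(z)M(z)^{-1}$. By the matching condition \eqref{ParaMatchingCond} at $-1$ and its analog at $+1$, $R$ has jumps of the form $\Id + O\bigl(n^{-1}\bigr)$ on $\partial U\cup \partial U_{+1}$, while on the remaining portion of the lens contour the jump of $R$ is exponentially small because $|\phi^{-2n}|<1$ there, cf.\ \eqref{phiGreater1}. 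The standard small-norm theory then gives $R(z) = \Id + O\bigl(n^{-1}\bigr)$ uniformly on $\overline{\C}$, as carried out in \cite[Section~7]{KMVV}.

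Both assertions now follow at once from $\tQ = R\cdot M$. On $U_{+1}$ one has $\tQ = R\,\tP_{+1}$, which inherits the bound $O\bigl(|z-1|^{-1/4}\bigr)$ from $\tP_{+1}$, giving \eqref{LegendreQBehaviour}. On $\overline{\C}\setminus(U\cup U_{+1})$ one has $\tQ = R\,N$, and $N$ is bounded there since the only singularities of its entries occur at $\pm 1$; the boundary values $\tQ_\pm = R\cdot N_\pm$ are bounded on $\Sigma$ away from $\pm 1$ for the same reason. The only part of this requiring genuine work is the construction of $\tP_{+1}$ and the verification that the remainder problem for $R$ is small-norm all the way up to the contour, despite the parametrices blowing up like $|z\mp 1|^{-1/4}$ at the endpoints; this is handled by contour deformation of the $R$-problem onto $\partial U \cup \partial U_{+1}$ as in \cite[Section~7]{KMVV}, so no serious obstacle is expected.
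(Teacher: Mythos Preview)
Your approach is correct and is essentially the same as the paper's: both build a Bessel local parametrix $\tP_{+1}$ at $+1$, invoke the small-norm result $R=\Id+O(n^{-1})$ from \cite[Section~7]{KMVV}, and read off the two claims from $\tQ=R\cdot M$. The paper's presentation differs only in that it writes $\tP_{+1}(z)=\sigma_3\tP(-z)\sigma_3$ directly via the reflection symmetry of the Legendre problem, and then appeals to the maximum modulus principle on $U_{+1}$ rather than restating the global $R$-problem.

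One technical point you elide: the raw Bessel parametrix built from $\Psi_0$ has jumps on three rays emanating from $+1$, whereas $\Sigma\cap U_{+1}$ contains in addition the segment $(1,1+\delta)$, on which $\tv$ carries the non-small entry $2\phi^{-2n}$. The paper therefore performs an explicit local contour deformation $\tP_{+1}\to\tP_{+1}^{\text{def}}$ so that the parametrix's jump contour matches $\Sigma$ exactly inside $U_{+1}$; without this step your $R=\tQ M^{-1}$ would retain a jump on $(1,1+\delta)\cap U_{+1}$ that is not $\Id+o(1)$. The fix is routine (analytic continuation of $\tP_{+1}$ across the deformed arcs, using that the jump matrices are bounded there), but it should be mentioned.
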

\begin{proof}
Analogously to the local parametrix for the Legendre problem at $z\! =\! -1$, ${\tP_{-1}(z)\! := \!\tP(z)}$, one can construct a local parametrix $\tP_{+1}(z)$ for the Legendre problem near the point $z = +1$ with local jumps inside an open neighbourhood $U_{+1}$ of $+1$ as depicted in Figure~\ref{contour1}.

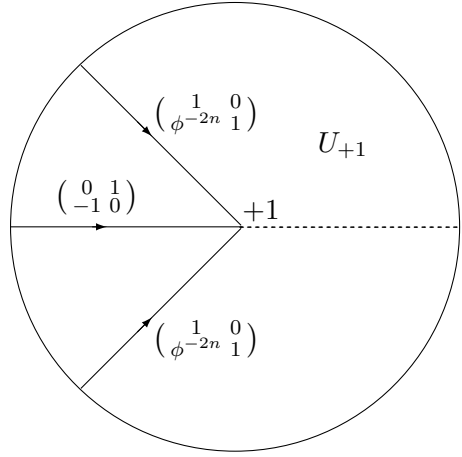
\begin{figure}[th]
\centering\vspace*{3mm}
\begin{picture}(7,5.2)
\put(0.42,2.5){\line(1,0){3.08}}
\put(1.5,2.5){\vector(1,0){0.2}}

\put(1.35,0.35){\line(1,1){2.13}}
\put(2.1,1.1){\vector(1,1){0.2}}

\put(1.35,4.65){\line(1,-1){2.13}}
\put(2.1,3.9){\vector(1,-1){0.2}}

\put(2.3,3.9){$\big(\begin{smallmatrix}
1 & 0
\\
\phi^{-2n} & 1
\end{smallmatrix}\big)$}
\put(1,2.8){$\big(\begin{smallmatrix}
0 & 1
\\
-1 & 0
\end{smallmatrix}\big)$}
\put(2.3,0.9){$\big(\begin{smallmatrix}
1 & 0
\\
\phi^{-2n} & 1
\end{smallmatrix}\big)$}

\put(3.5,2.6){$+1$}
\put(4.5,3.5){$U_{+1}$}

\curvedashes{0.05,0.05}
\curve(3.4,2.5, 6.4,2.5)

\put(3.4,2.5){\circle{5.97}}
\end{picture}
\vspace{12pt}
\caption{The jump conditions for $\tP_{+1}$.}\label{contour1}
\end{figure}

In fact, we have $\tP_{+1}(z) = \sigma_3 \tP_{-1}(-z)\sigma_3$, see \cite[equation~B.10]{CD}. Hence, it follows from Lemma~\ref{PropP}\,(iv) that
\begin{align}\label{Legendre+1Esti0}
 \tP_{+1}(z) = O\big(|z-1|^{-1/4}\big)
\end{align}
uniformly for $n\to\infty$. After deforming the local contour such that it matches locally with $\Sigma$ as depicted in Figure~\ref{contour2}, we can analytically continue $\tP_{+1}$ as necessary to obtain a \emph{deformed} local parametrix $\tP_{+1}^{\text{def}}$, which would satisfy locally the same jump conditions as $\tQ$.

\begin{figure}[th]
\centering\vspace{2mm}
\begin{picture}(7,5.2)
\put(0.42,2.5){\line(1,0){4.58}}
\put(5, 2.5){\line(-1,0.59){3.65}}
\put(5, 2.5){\line(-1,-0.59){3.65}}
\put(2.8,3.8){\vector(1,-0.6){0.2}}
\put(2.8,1.2){\vector(1,0.6){0.2}}
\put(1.5,2.5){\vector(1,0){0.2}}
\put(4.4,2.5){\vector(-1,0){0.2}}

\put(2.3,4.3){$\big(\begin{smallmatrix}
1 & 0
\\
\phi^{-2n} & 1
\end{smallmatrix}\big)$}
\put(1,2.8){$\big(\begin{smallmatrix}
0 & 1
\\
-1 & 0
\end{smallmatrix}\big)$}
\put(2.3,0.5){$\big(\begin{smallmatrix}
1 & 0
\\
\phi^{-2n} & 1
\end{smallmatrix}\big)$}

\put(4.6,1.4){$\big(\begin{smallmatrix}
1 & 0
\\
2\phi^{-2n} & 1
\end{smallmatrix}\big)$}

\put(3.5,2.6){$+1$}
\put(3.4, 2.415){$\boldsymbol{\cdot}$}
\put(5,2.6){$1+\delta$}
\put(4.5,3.5){$U_{+1}$}


\curvedashes{0.05,0.05}
\curve(5,2.5, 6.4,2.5)
\curve(4.6,1.7, 4.1, 2, 4, 2.5)
\put(4,2.3){\vector(0,1){0.2}}

\put(3.4,2.5){\circle{5.97}}
\end{picture}
\vspace{15pt}
\caption{The jump conditions for $\tP_{+1}^\text{def}$.}\label{contour2}
\end{figure}
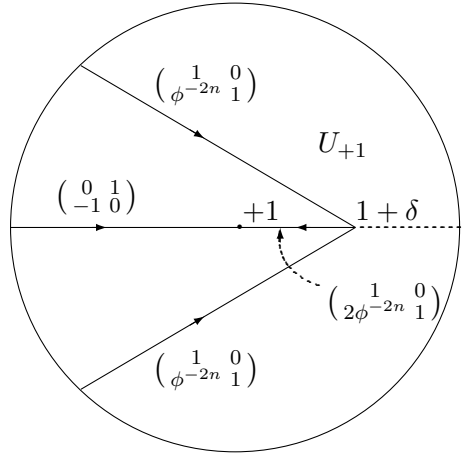

Because the jump matrices $\tv$ and their analytic continuations are uniformly bounded near $z = +1$, $\tP_{+1}^{\text{def}}$ would continue to satisfy the estimate \eqref{Legendre+1Esti0}
\begin{align}\label{Legendre+1Esti1}
 \tP_{+1}^{\text{def}}(z) = O\big(|z-1|^{-1/4}\big)
\end{align}
uniformly for $n\to\infty$. As the contour deformations are local, we also know that the matching condition \eqref{ParaConv} remains unchanged, at least on $\partial U_{+1}$:
\begin{align}\label{Legendre+1Esti2}
\tQ(s)\big[\tP_{+1}^{\text{def}}(s)\big]^{-1} = \Id + O\big(n^{-1}\big), \qquad s\in \partial U_{+1}.
\end{align}
However, as $\tQ\big[\tP_{+1}^{\text{def}}\big]^{-1}$ has no jumps inside $U_{+1}$, we can extend \eqref{Legendre+1Esti2} to all of $U_{+1}$ by the maximum modulus principle for holomorphic functions. Together with \eqref{Legendre+1Esti1}, the estimate~\eqref{LegendreQBehaviour} follows.

Regarding the second statement, it has been shown in \cite[Section~7]{KMVV} that for a wide class of weight functions including $\tw$, the outer parametrix solution $N$ is an approximation to the exact RH solution, in our case $\tQ$, uniformly in $z$ as long we stay away from the points $\pm 1$:
\begin{align*}
 \big|\tQ(z)-N(z)\big| = O\big(n^{-1}\big), \qquad z \mbox{ staying away from } \pm 1.
\end{align*}
As can be seen from \eqref{OuterParametrix}, $N$ is $n$-independent and bounded away from the points $\pm 1$. This finishes the proof.
\end{proof}

\section{Modified RH problems}\label{ModifiedRHP}
We are now in a position to define modified versions of the three RH problems found in Section~\ref{RHProblems}, which will be referred to as the respective ${}^\star$-analogs. The three ${}^\star$RH problems are introduced implicitly by defining their respective solutions. Here $U$ is a small neighbourhood of $-1$, as before.
\begin{align*}
 \sbQ(z) &= \begin{cases}
 \bQ(z), & z \in \mathbb{C} \setminus (\Sigma \cup U),
 \\
 \bQ(z) [\bP(z)]^{-1}, & z \in U,
 \end{cases}
 \\
 \shQ(z) &= \begin{cases}
 \hQ(z), & z \in \mathbb{C} \setminus (\Sigma \cup U),
 \\
 \hQ(z) [\hP(z)]^{-1}, & z \in U,
 \end{cases}
 \\
 \stQ(z) &=
 \begin{cases}
 \tQ(z), & z \in \mathbb{C} \setminus (\Sigma \cup U),
 \\
 \tQ(z) [\tP(z)]^{-1}, & z \in U.
 \end{cases}
\end{align*}
Note that because of \eqref{ParametrixJump}, the jumps on $U \cap \Sigma$ cancel out, which means that all three solutions can be uniquely defined on all of $U$. We will denote the new contour, which is the same for all three problems, by $\Sigma^\star = (\Sigma \setminus U) \cup \partial U$ and assume that $\partial U$ is oriented clockwise. The corresponding jump matrices are denoted by $\sbv$, $\shv$ and $\stv$, hence
\begin{align*}
 \sbQ_+(s) = \sbQ_-(s)\sbv(s), \qquad s\in \Sigma^\star,
\end{align*}
and so on. Note also that the normalization at infinity remains unchanged and the jump matrices on $\partial U$ are just the corresponding local parametrices:
\begin{align*}
 \sbv(s) = \bP(s), \qquad \shv(s) = \hP(s), \qquad \stv(s) = \tP(s), \qquad s \in \partial U.
\end{align*}

Now consider the singular integral operator
\begin{align*}
 1 - \sCautv \colon\ L^2(\Sigma^\star) \to L^2(\Sigma^\star), \qquad f \mapsto f - \sCau^-(f (\stv-\Id)).
\end{align*}
We claim that this operator is invertible and that the inverse is given by
\begin{align*}
 &(1-\sCautv)^{-1} \colon\ L^2(\Sigma^\star) \to L^2(\Sigma^\star), \qquad f \mapsto f + \sCau^-\big(f (\stv-\Id) [\stQ_+]^{-1}\big)\stQ_-.
\end{align*}
At the moment, it is not even clear whether the above operator is well-defined as a map from~$L^2(\Sigma^\star)$ to itself, as $\stQ_\pm$ has a logarithmic singularity near $+1$.

To prove the claim, we proceed as follows: First, we partition the jump contour $\Sigma^\star = \underbrace{\partial U}_{\Sigl} \, \cup \,  \underbrace{\Sigma \setminus U}_{\Sigr}$ as shown in Figure~\ref{SigmaStar}.

\begin{figure}[th]
\centering\vspace{-12mm}
\begin{picture}(7,5.2)
{\color{blue}\put(1,2.5){\line(1,0){6.0}}}%
{\color{blue}\put(3.5,2.5){\vector(1,0){0.2}}}%

{\color{blue}\put(3.5,3.52){\vector(1,0){0.2}}}%
{\color{blue}\put(3.5,1.48){\vector(1,0){0.2}}}%
{\color{red}\put(-0.1,3.5){\vector(1,0){0.2}}}%

{\color{blue}\curve(0.85,2, 4,1.5, 7,2.5)}%
{\color{blue}\curve(0.85,3, 4,3.5, 7,2.5)}%


\put(7.2, 2.6){$1+\delta$}
\put(0, 2.6){$-1$}
\put(-0.5,2.1){$\textcolor{red}{U}$}
\put(5.8,2.6){$1$}
\put(5.7,2.415){$\bullet$}
\put(0,2.415){$\bullet$}
{\color{red}\put(-1.1, 3.5){$\Sigl$}}%
{\color{blue}\put(5,3.6){$\Sigr$}}%

{\color{red}\put(0,2.5){\circle{2}}}%
\end{picture}
\vspace{-40pt}
\caption{The jump contour $\Sigma^\star = \Sigl \cup \Sigr$.}\label{SigmaStar}
\end{figure}

Next, we decompose the operator $\sCau^-\big(\ \cdot \ (\stv-\Id) [\stQ_+]^{-1}\big)\stQ_-$. Setting $\stu = \stv-\Id$, we obtain
\begin{align*}
 \sCau^-\big(\ \cdot \ \stu [\stQ_+]^{-1}\big)\stQ_- ={}&
 \sCau^-\big(\ \cdot \ \chi_{\Sigl} \stu [\stQ_+]^{-1}\big)\stQ_- + \sCau^-\big(\ \cdot \ \chi_{\Sigr} \stu [\stQ_+]^{-1}\big)\stQ_-
 \\
 ={}& \sCau^-\big(\ \cdot \ \chi_{\Sigl} \stu [\stQ_+]^{-1}\big)\stQ_- + \sCau^-\big(\ \cdot \ \chi_{\Sigr} \stu [\stQ_+]^{-1}\big)\stQ_-\chi_{\Sigr}
 \\
 & + \sCau^-\big(\ \cdot \ \chi_{\Sigr} \stu [\stQ_+]^{-1}\big)\stQ_-\chi_{\Sigl},
\end{align*}
where $\chi_{\Sigma^j}$ is the characteristic function of $\Sigma^j$ for $j = \ell, r$. By definition, we have $\stQ_\pm \chi_{\Sigma^r} = \tQ_\pm \chi_{\Sigma^r}$. Note that the mapping
\begin{align*}
f \mapsto f\chi_{\Sigl} \stu \big[\stQ_+\big]^{-1}
\end{align*}
is an operator uniformly bounded in $n$ from $L^2(\Sigma^\star) \to L^2\big(\Sigma^\star, |z-1|^{-1/4}\big)$, as $\big[\stQ_+\big]^{-1}$ converges uniformly on $\Sigl = \partial U$ to the outer parametrix $N$, see \eqref{ParaMatchingCond} and \eqref{ParaConv}. Here and in the following, we refer to \emph{uniform boundedness in $n$} of some operator $T = T_n$, to the operator norm~$\Vert T_n \Vert$ being bounded by an $n$-independent constant, cf.\ Theorem~\ref{UnifLeg}. As $|z-1|^{-1/4} \in A_2(\Sigma^\star)$ (see Theorem~\ref{Muckenhoupt}), we have that the mapping
\begin{align*}
 f \mapsto \sCau^{-}\big(f\chi_{\Sigl} \stu \big[\stQ_+\big]^{-1}\big)
\end{align*}
defines a uniformly bounded operator from $L^2(\Sigma^\star) \to L^2\big(\Sigma^\star, |z-1|^{-1/4}\big)$. Finally, using $\stQ = \tQ$ in $\C \setminus U$ and the estimate \eqref{LegendreQBehaviour}, we conclude that
\begin{align*}
 f \mapsto \sCau^{-}\big(f\chi_{\Sigl} \stu [\stQ_+]^{-1}\big)\stQ_-
\end{align*}
defines a uniformly bounded operator from $L^2(\Sigma^\star) \to L^2(\Sigma^\star)$.

The uniform boundedness of the mapping
\begin{align*}
 f \mapsto \sCau^-\big(f\chi_{\Sigr} \stu [\stQ_+]^{-1}\big)\stQ_-\chi_{\Sigr}
\end{align*}
as an operator from $L^2(\Sigma^\star) \to L^2(\Sigma^\star)$ follows directly from Proposition~\ref{ExpForm} together with Theorem \ref{UnifLeg}.

Finally, the mapping
\begin{align*}
 f \mapsto f\chi_{\Sigr} \stu \big[\stQ_+\big]^{-1}
\end{align*}
defines a uniformly bounded operator from $L^2(\Sigma^\star)$ to $L^2\big(\Sigma^\star, |z-1|^{1/4}\big)$. As $|z-1|^{1/4} \in A_2(\Sigma^\star)$, the mapping
\begin{align*}
 f \mapsto \sCau^-\big(f\chi_{\Sigr} \stu \big[\stQ_+\big]^{-1}\big)
\end{align*}
is a uniformly bounded operator from $L^2(\Sigma^\star) \to L^2\big(\Sigma^\star, |z-1|^{1/4} \big)$ as well. Moreover, the multiplication with $\stQ_-\chi_{\Sigl}$ defines a uniformly bounded operator from $L^2\big(\Sigma^\star, |z-1|^{1/4}\big) \to L^2(\Sigma^\star)$, implying that
\begin{align*}
 f \mapsto \sCau^-\big(f\chi_{\Sigr} \stu \big[\stQ_+\big]^{-1}\big)\stQ_-\chi_{\Sigma^\ell}
\end{align*}
is a uniformly bounded operator from $L^2(\Sigma^\star)\to L^2(\Sigma^\star)$. We obtain:
\begin{Theorem}\label{UniformBoundedness}
The inverse of the operator $1-\sCautv$ on $L^2(\Sigma^\star)$ exists and is given by
\begin{align*}
 (1-\sCautv)^{-1} \colon\ L^2(\Sigma^\star) \to L^2(\Sigma^\star), \ \ f \mapsto f + \sCau^-\big(f (\stv-\Id)\big[\stQ_+\big]^{-1}\big)\stQ_-.
\end{align*}
Moreover, the operator norm is uniformly bounded as $n \to \infty$.
\end{Theorem}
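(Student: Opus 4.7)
The plan is to verify the explicit formula in two stages: first, a purely algebraic derivation that the stated expression is at least a formal inverse, and second, an $n$-uniform $L^2$-bound. For the algebraic part I would rerun the argument of Proposition~\ref{ExpForm} almost verbatim: the equation $(1-\sCautv)\psi = g$ is equivalent by Theorem~\ref{TheoremInh} to an inhomogeneous RH problem with jump $\stv$, and since $\stQ_+ = \stQ_-\stv$ on the entirety of $\Sigma^\star$, the manipulation $\bigl(\sCau^+(f)+g\bigr)[\stQ_+]^{-1} = \bigl(\sCau^-(f)+g\bigr)[\stQ_-]^{-1}$ combined with the Sokhotski--Plemelj formula reproduces exactly the stated formula. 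The only subtlety is that intermediate quantities are a priori only in $L^{2-\epsilon}(\Sigma^\star)$, caused by the $|z-1|^{-1/4}$-behaviour of $\stQ_\pm$ near $+1$; this is dealt with exactly as in the last paragraph of the proof of Proposition~\ref{ExpForm}.

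The real work is showing that the map $f \mapsto \sCau^-\bigl(f(\stv-\Id)[\stQ_+]^{-1}\bigr)\stQ_-$ is bounded on $L^2(\Sigma^\star)$ with an $n$-independent bound. Inserting the characteristic functions $\chi_{\Sigl}, \chi_{\Sigr}$ on both the input $f$ and on the right-multiplying factor $\stQ_-$, I would split this operator into three summands and analyze each separately. The $\chi_{\Sigl}$-input summand uses the matching condition~\eqref{ParaMatchingCond} to bound $[\stQ_+]^{-1}$ uniformly on $\partial U$, combined with the Muckenhoupt property of $|z-1|^{-1/4}$ on $\Sigma^\star$ (Theorem~\ref{Muckenhoupt}) so that the Cauchy operator maps into $L^2\bigl(\Sigma^\star, |z-1|^{-1/4}\bigr)$, after which the estimate~\eqref{LegendreQBehaviour} on $\stQ_- = \tQ_-$ off $U$ brings us back into $L^2(\Sigma^\star)$. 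The diagonal $\chi_{\Sigr}$-$\chi_{\Sigr}$ summand coincides, where $\stQ = \tQ$ and $\stv = \tv$, with the Legendre resolvent of Proposition~\ref{ExpForm}, so its uniform boundedness follows at once from Theorem~\ref{UnifLeg}. The off-diagonal $\chi_{\Sigr}$-$\chi_{\Sigl}$ summand works symmetrically, this time routing through $L^2\bigl(\Sigma^\star, |z-1|^{1/4}\bigr)$ (another admissible Muckenhoupt weight) to absorb the $|z-1|^{-1/4}$ growth of $[\stQ_+]^{-1}$ near $+1$, after which multiplication by $\stQ_-\chi_{\Sigl}$ is uniformly bounded.

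The main obstacle throughout is that $\stQ_\pm$ is not square integrable near $+1$, so the unweighted $L^2$-bound of the Legendre case (Theorem~\ref{UnifLeg}, Corollary~\ref{CorHn}) cannot be transplanted directly to the modified setting. The resolution is precisely the weighted $L^p$-framework of Section~\ref{Section_LegendreResolvent}: every singular integral operator involved passes through a Muckenhoupt-weighted space with weight $|z-1|^{\pm 1/4}$, for which Theorem~\ref{Muckenhoupt} guarantees boundedness of the Cauchy transform, and the weights cancel against the $\stQ_\pm$ factors at the very end. Summing the three summands then yields the uniform $L^2(\Sigma^\star)$-bound; combined with the formal derivation, this establishes both the explicit form of $(1-\sCautv)^{-1}$ and the uniform invertibility claim.
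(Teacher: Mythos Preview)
Your proposal is correct and follows essentially the same approach as the paper: the three-piece decomposition via $\chi_{\Sigl}$, $\chi_{\Sigr}$ on input and output, the use of the Muckenhoupt weights $|z-1|^{\pm 1/4}$ for the off-diagonal pieces, the identification of the $\chi_{\Sigr}$--$\chi_{\Sigr}$ piece with the Legendre resolvent of Proposition~\ref{ExpForm} bounded by Theorem~\ref{UnifLeg}, and the algebraic verification via the inhomogeneous RH problem are exactly what the paper does (the paper places the boundedness argument in the paragraphs immediately preceding the theorem and then defers the algebraic part to ``a computation analogous to the one given in the proof of Proposition~\ref{ExpForm}'').
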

\begin{proof}
That the mapping is indeed uniformly bounded follows from the preceding argument. The fact that it coincides with the inverse of $1-\sCautv$ follows from a computation analogous to the one given in the proof of Proposition~\ref{ExpForm}.
\end{proof}

\subsection[Uniform invertibility of the star resolvents]{Uniform invertibility of the $\boldsymbol{{}^\star}$resolvents}\label{Comparison}
The uniform boundedness stated in Theorem \ref{UniformBoundedness} can now be extended to the uniform invertibility of the operators $1-\sCautv$, $1-\sCauhv$. Note that the jump matrices satisfy
\begin{align}\label{UniformConv}
\Vert \sbv - \stv \Vert_{L^\infty(\Sigma^\star)}, \ \Vert \shv - \stv \Vert_{L^\infty(\Sigma^\star)} \to 0
\end{align}
for $n \to \infty$. In fact, we have
\begin{align}
 \sbv - \stv =
 \begin{cases}
 \begin{pmatrix}
 0 & 0
 \\
 \big(\Fw(s)-1\big)\phi^{-2n}(s) & 0
 \end{pmatrix}, & s\in (\Sigma_1 \cup \Sigma_2)\cap\Sigma^r,\vspace{1mm}
 \\
 \begin{pmatrix}
 0 & 0
 \\
 \big(\Fw_+(s) +\Fw_-(s)-2\big)\phi^{-2n}(s) & 0
 \end{pmatrix}, & s\in (1,1+\delta),
 \\
 0, & s\in(-1,1)\cap \Sigma^r,
 \\
 \bP(s)-\tP(s), & s\in\Sigma^\ell,
 \end{cases}\label{DiffJumpMatrices}
\end{align}
and the corresponding claim in \eqref{UniformConv} for the difference $\sbv - \stv$ follows from \eqref{phiGreater1}, Corollary~\ref{CorF+1} and \eqref{ParaMatchingCond}. An analog formula of the form \eqref{DiffJumpMatrices} can be written down for $\shv - \stv$:
\begin{align*}
 \shv - \stv =
 \begin{cases}
 \begin{pmatrix}
 0 & 0
 \\
 \big(\hFw(s)-1\big)\phi^{-2n}(s) & 0
 \end{pmatrix}, & s\in (\Sigma_1 \cup \Sigma_2)\cap\Sigma^r,\vspace{1mm}
 \\
 \begin{pmatrix}
 0 & 0
 \\
 2\big(\hFw(s)-1\big)\phi^{-2n}(s) & 0
 \end{pmatrix}, & s\in (1,1+\delta),
 \\
 0, & s\in(-1,1)\cap \Sigma^r,
 \\
 \hP(s)-\tP(s), & s\in\Sigma^\ell.
 \end{cases}
\end{align*}
Here \eqref{UniformConv} follows as before after using estimate \eqref{hat F^2/w at +1} instead of \eqref{F^2/w at +1}.

Thus it follows that
\begin{align*}
 \Vert (1-\sCauv) - (1-\sCautv) \Vert_{L^2(\Sigma^\star) \to L^2(\Sigma^\star)} \to 0,
 \qquad
 \Vert (1-\sCauhv) - (1-\sCautv) \Vert_{L^2(\Sigma^\star) \to L^2(\Sigma^\star)} \to 0
\end{align*}
as $n \to \infty$. As the operator $1-\sCautv$ is uniformly invertible, a standard argument (see, e.g., \cite[Theorem~4.7]{CD}) shows that the operators $1-\sCauv$, $1-\sCauhv$ are also uniformly invertible for $n$ large enough. We summarize:
\begin{Theorem}\label{TheoremUniInv}
The operators $1-\sCauv$ and $1-\sCauhv$ are invertible for $n$ large enough and the operator norms of their inverses are uniformly bounded as $n \to \infty$.
\end{Theorem}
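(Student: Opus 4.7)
The plan is a standard second-resolvent perturbation argument, taking Theorem~\ref{UniformBoundedness} as the black box that provides the uniform invertibility of the reference operator $1 - \sCautv$. The whole argument rests on showing that the jump matrices $\sbv$ and $\shv$ both converge to $\stv$ in $L^\infty(\Sigma^\star)$ as $n \to \infty$, because
\[
\bigl(\sCauv - \sCautv\bigr) f = \sCau^-\bigl( f(\sbv - \stv)\bigr),
\]
so
\[
\Vert \sCauv - \sCautv \Vert_{L^2(\Sigma^\star) \to L^2(\Sigma^\star)} \leq \Vert \sCau^- \Vert_{L^2(\Sigma^\star)\to L^2(\Sigma^\star)}\,\Vert \sbv - \stv \Vert_{L^\infty(\Sigma^\star)},
\]
and similarly for $\shv - \stv$. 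The Cauchy operator $\sCau^-$ is $L^2$-bounded by Theorem~\ref{Muckenhoupt} applied with unit weight, so only the $L^\infty$-smallness has to be checked.

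Next I would verify the $L^\infty$-convergence piece by piece using the explicit expression \eqref{DiffJumpMatrices} for $\sbv - \stv$. On $(\Sigma_1 \cup \Sigma_2) \cap \Sigma^r$, the lens contour stays strictly away from $[-1,1]$, so \eqref{phiGreater1} yields $|\phi(s)| \geq 1+c$ for some $c>0$, making $\phi^{-2n}$ decay exponentially while $\Fw - 1$ stays bounded. On $(1,1+\delta)$, $|\phi^{-2n}| \leq 1$ uniformly, and Corollary~\ref{CorF+1} gives $\Fw_+ + \Fw_- - 2 = O(1/\log^2(x-1))$ as $x \to 1^+$; given $\varepsilon > 0$ I would pick $\eta$ so this factor is $< \varepsilon$ on $(1,1+\eta)$, and then choose $n$ large enough that $\phi^{-2n}$ is $< \varepsilon$ uniformly on $[1+\eta, 1+\delta]$, where the factor in front is bounded. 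On $\Sigl = \partial U$, the matching estimate \eqref{ParaMatchingCond} gives $\bP - \tP = O(n^{-1})$ uniformly on the circle. The analysis of $\shv - \stv$ is identical, with \eqref{hat F^2/w at +1} replacing Corollary~\ref{CorF+1}; here the coefficient $\hFw - 1 = O(|s-1|^{1/2})$ is in fact easier to control.

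With the $L^\infty$-convergence in hand, I would conclude through the factorization
\[
1 - \sCauv = (1 - \sCautv)\bigl[\,1 - (1 - \sCautv)^{-1}(\sCauv - \sCautv)\,\bigr],
\]
and analogously for $1 - \sCauhv$. By Theorem~\ref{UniformBoundedness}, there is a constant $M$, independent of $n$, with $\Vert (1-\sCautv)^{-1}\Vert \leq M$. For $n$ large enough the preceding step gives $\Vert \sCauv - \sCautv \Vert < (2M)^{-1}$, so the bracketed factor is invertible by a Neumann series with norm at most $2$, and consequently $\Vert (1-\sCauv)^{-1}\Vert \leq 2M$ uniformly in $n$; the same bound is obtained for $(1-\sCauhv)^{-1}$.

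Honestly, I do not expect a genuine obstacle here: all the substantive difficulties have already been absorbed into Theorem~\ref{UniformBoundedness} (whose proof required the modified ${}^\star$RH problem and the explicit Legendre resolvent formula from Proposition~\ref{ExpForm}) and into the local-parametrix estimates of Section~\ref{LocPar}. The only delicate point in the present proof is the uniform $L^\infty$ control on $(1, 1+\delta)$, where $\phi^{-2n}$ is not exponentially small near $x = 1$; this is handled exactly by the splitting argument above, trading smallness of the coefficient $\Fw_+ + \Fw_- - 2$ near $1$ against exponential smallness of $\phi^{-2n}$ away from $1$.
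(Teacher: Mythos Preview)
Your proposal is correct and follows essentially the same approach as the paper: verify $\Vert \sbv - \stv\Vert_{L^\infty(\Sigma^\star)},\,\Vert \shv - \stv\Vert_{L^\infty(\Sigma^\star)} \to 0$ using \eqref{phiGreater1}, Corollary~\ref{CorF+1} (resp.\ \eqref{hat F^2/w at +1}), and \eqref{ParaMatchingCond}, deduce operator-norm convergence, and then invoke the standard Neumann-series perturbation argument against the uniformly invertible $1-\sCautv$ from Theorem~\ref{UniformBoundedness}. The paper is terser---it simply states the $L^\infty$ convergence and refers to a ``standard argument''---whereas you spell out the splitting on $(1,1+\delta)$ and the resolvent factorization, but there is no substantive difference.
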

Note that Theorem \ref{TheoremUniInv} is the analog of Theorem \ref{UnifLeg} initially proved in \cite[Theorem~4.5]{CD}, but for the logarithmic weight function and on a contour avoiding the problematic point $z = -1$. Interestingly Theorem \ref{UnifLeg} played a crucial role in the proof of Theorem \ref{TheoremUniInv}.

\section{Asymptotic Analysis}
\label{SectAsymptoticAnalysis}
The following section will be largely based on \cite[Section~5]{CD} and culminates in the proof of Theorem \ref{MainTheorem}.

\subsection{Some norm estimates}
Let us define $\smu_{n} = Q_{-}^{\star (n)}$ and $\shmu_{n} = \widehat Q_{-}^{\star (n)}$, where in the following we will make the $n$-dependence explicit. In particular, we will denote the jump matrices with a subscript $n$ and the Cauchy operator with a superscript $(n)$ for notational convenience, i.e., $v^\star_n$, \smash{$\Cau^{(n)}$} and so on. We will now prove an analog of \cite[Proposition~5.1]{CD} in the ${}^\star$-case.
\begin{Proposition}\label{PropFourInt}
The following estimates hold for $n \to \infty$:
\begin{enumerate}[$(i)$]\itemsep=0pt
 \item $\Vert \shmu_n (\sbv_n - \shv_n) \Vert_{L^2(\Sigma^\star)} = O\big(\frac{1}{n^{1/2}\log^2 n}\big)$,
 \item $\Vert \smu_n - \shmu_n \Vert_{L^2(\Sigma^\star)} = O\big(\frac{1}{n^{1/2}\log^2 n}\big)$,
 \item $\Vert \shmu_n (\sbv_n - \shv_n) - \shmu_{n+1} (\sbv_{n+1} - \shv_{n+1}) \Vert_{L^2(\Sigma^\star)} = O\big(\frac{1}{n^{3/2}\log^2 n}\big)$,
 \item $\Vert (\smu_n - \shmu_n) - (\smu_{n+1} - \shmu_{n+1}) \Vert_{L^2(\Sigma^\star)} = O\big(\frac{1}{n^{3/2}\log^2 n}\big)$.
\end{enumerate}
\end{Proposition}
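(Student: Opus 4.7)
The plan is to treat (i) by direct contour-by-contour estimation, derive (ii) from (i) via the uniform resolvent bound Theorem \ref{TheoremUniInv}, then obtain (iii) as a differenced analog of (i), and finally derive (iv) from (iii) by the same resolvent argument used for (ii).

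For (i), decompose $\Sigma^\star = \Sigr \cup \Sigl$ and use the explicit formulas of Section \ref{Comparison} for $\sbv_n - \shv_n$ piece by piece. On $\Sigl = \partial U$, $\sbv_n - \shv_n = \bP - \hP = (\bP - N) - (\hP - N) = O\bigl(n^{-1}\bigr)$ uniformly by the matching condition \eqref{ParaMatchingCond}, contributing $O\bigl(n^{-1}\bigr)$ to the $L^2$-norm since $\partial U$ is compact. On $(-1,1)\cap\Sigr$ the difference vanishes. On $(1,1+\delta)$, Corollary \ref{CorF+1} together with \eqref{hat F^2/w at +1} give
\begin{equation*}
 \frac{F^2}{w_+}(s) + \frac{F^2}{w_-}(s) - 2\frac{\hF^2}{\hw}(s) = O\bigl(\log^{-2}(s-1)\bigr) + O\bigl(|s-1|^{1/2}\bigr),
\end{equation*}
and the factor $\phi^{-2n}$ concentrates the integral on the scale $|s-1|\sim n^{-2}$; the change of variables $u = 2n\sqrt{2(s-1)}$ bounds this piece by $O\bigl((n\log^2 n)^{-1}\bigr)$ in $L^2(\Sigma^\star)$, comfortably within the target. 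The contribution from $\Sigma_1\cup\Sigma_2$ is handled similarly using Proposition \ref{F^2/w} and the exponential decay of $\phi^{-2n}$ off the real axis. Throughout, $\shmu_n$ is uniformly bounded on $\Sigma^\star$ (the ${}^\star$-construction of Section \ref{ModifiedRHP} has removed the $|z+1|^{-1/2}$ singularity, and the growth at $+1$ is at most logarithmic), so it is absorbed without loss.

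For (ii), subtracting the integral equations $\smu_n = \Id + \sCau^-(\smu_n(\sbv_n-\Id))$ and $\shmu_n = \Id + \sCau^-(\shmu_n(\shv_n-\Id))$ yields
\begin{equation*}
 (1-\sCauv)(\smu_n - \shmu_n) = \sCau^-\bigl(\shmu_n(\sbv_n-\shv_n)\bigr).
\end{equation*}
Theorem \ref{TheoremUniInv} supplies a uniform bound on $(1-\sCauv)^{-1}$, and the $L^2$-boundedness of $\sCau^-$ reduces (ii) directly to (i).

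For (iii), write
\begin{equation*}
 \shmu_n(\sbv_n-\shv_n) - \shmu_{n+1}(\sbv_{n+1}-\shv_{n+1}) = (\shmu_n - \shmu_{n+1})(\sbv_n-\shv_n) + \shmu_{n+1}\bigl((\sbv_n-\shv_n) - (\sbv_{n+1}-\shv_{n+1})\bigr).
\end{equation*}
For the second term, on $\Sigr$ the difference contains the factor $\phi^{-2n}-\phi^{-2(n+1)} = \phi^{-2n}(1-\phi^{-2})$, which gains $|s-1|^{1/2}$ near $+1$ and hence an extra $n^{-1}$ after concentration on the scale $|s-1|\sim n^{-2}$; Proposition \ref{Proprr} is the precise tool needed to control the residual Szeg\H{o} contribution $\frac{F^2}{w_\pm}(1+r_n) - \frac{F^2}{w_\pm}(1+r_{n+1})$ that does not factor out cleanly. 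On $\Sigl$, the parametrix difference $(\bP_n - \hP_n) - (\bP_{n+1} - \hP_{n+1})$ is comfortably within the target, as each $\bP_n - \hP_n$ is $O\bigl(n^{-1}\bigr)$ with explicit $n$-dependence through $(2\pi n)^{\sigma_3/2}$, $\Psi_1(n^2\xi)$ and $[-\phi]^{-n\sigma_3}$, so the discrete $n$-derivative picks up the extra factor of $n^{-1}$. The first term combines (i) at level $n$ with a bound $\Vert\shmu_n - \shmu_{n+1}\Vert_{L^2(\Sigma^\star)} = O\bigl(n^{-1}\bigr)$, obtained from the resolvent argument of (ii) applied to the difference $\shmu_n - \shmu_{n+1}$ together with $\Vert\shv_n - \shv_{n+1}\Vert = O\bigl(n^{-1}\bigr)$ in the appropriate weighted norm. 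Finally, (iv) follows from (iii) by the same manipulation used to derive (ii) from (i), applied to $(\smu_n - \shmu_n) - (\smu_{n+1} - \shmu_{n+1})$. The main obstacle is step (iii): the sharp bookkeeping of the $1/n$ and $1/\log^2 n$ gains near the logarithmic singularity hinges on the refined form of Proposition \ref{Proprr}, which tracks the dependence on the convergence rates of $r_n, \tilde r_n$.
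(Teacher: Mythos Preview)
Your overall architecture matches the paper's: split $\Sigma^\star=\Sigl\cup\Sigr$, handle $\Sigl$ via the matching condition~\eqref{ParaMatchingCond} and the parametrix expansion, reduce (ii) and (iv) to (i) and (iii) by the uniform resolvent bound of Theorem~\ref{TheoremUniInv}. The paper in fact does not re-derive the $\Sigr$ estimates at all but observes that on $\Sigr$ one has $\shmu_n=\hmu_n$, $\sbv_n=\bv_n$, $\shv_n=\hv_n$, and then invokes \cite[equation~(5.1) and Proposition~B.2]{CD} directly; your sketch attempts to reconstruct that computation.

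There is, however, a genuine gap in your treatment of $\Sigr$. The assertion that ``$\shmu_n$ is uniformly bounded on $\Sigma^\star$\,\ldots\,so it is absorbed without loss'' is incorrect near $+1$. The pointwise $O(\log|z-1|)$ growth you cite is the behaviour for \emph{fixed} $n$; the behaviour \emph{uniform in $n$} is $\hQ^{(n)}(z)=O\bigl(|z-1|^{-1/4}\bigr)$, exactly as in \eqref{LegendreQBehaviour} for the Legendre case (the model weight $\hw$ lies in the same KMVV class). At the concentration scale $|s-1|\sim n^{-2}$ this is a factor of order $n^{1/2}$, and it is precisely this factor that turns your claimed $O\bigl((n\log^2 n)^{-1}\bigr)$ into the actual target $O\bigl(n^{-1/2}\log^{-2}n\bigr)$. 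On the lips $\Sigma_1\cup\Sigma_2$ the issue is sharper still: there $\frac{F^2}{w}-\frac{\hF^2}{\hw}=O\bigl(1/\log|z-1|\bigr)$, not $O\bigl(1/\log^2|z-1|\bigr)$, and a naive bound using only $|\shmu_n|\lesssim|z-1|^{-1/4}$ would overshoot the target by a factor $\log n$. What saves the estimate is the specific structure of the \emph{second column} of $\hmu_n$, expressed via the Bessel parametrix at $+1$ (cf.\ \eqref{explicithmu}); this is the content of \cite[Proposition~B.2]{CD} which the paper cites, and which your phrase ``handled similarly'' does not capture.

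A smaller but related slip occurs in (iii): ``the first term combines (i) at level $n$ with a bound $\Vert\shmu_n-\shmu_{n+1}\Vert_{L^2(\Sigma^\star)}=O(n^{-1})$'' is a type error, since $\Vert AB\Vert_{L^2}$ is not controlled by $\Vert A\Vert_{L^2}\Vert B\Vert_{L^2}$. The paper instead uses the $L^\infty$ bound $\shmu_n-\shmu_{n+1}=O(n^{-1})$ on $\Sigl$ (from the parametrix expansion \cite[equation~(8.7)]{KMVV}) together with \eqref{jumpsComparison}, and on $\Sigr$ again defers to \cite[Proposition~B.2]{CD}.
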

\begin{proof}
The proof is for the most part taken from \cite[Proposition~B.2]{CD}, with the only difference being the contribution from $\Sigl =\partial U$ instead of $\Sigma \cap U$.

For (i), let us write
\begin{align}\label{IntDecomp}
 \Vert \shmu_n (\sbv_n - \shv_n) \Vert_{L^2(\Sigma^\star)}^2 = \Vert \shmu_n (\sbv_n - \shv_n) \Vert_{L^2(\Sigl)}^2 + \Vert \shmu_n (\sbv_n - \shv_n) \Vert_{L^2(\Sigr)}^2.
\end{align}
As $\hmu_n|_{s\in \Sigr} = \shmu_n|_{s\in\Sigr}$, $\bv_n|_{s\in \Sigr} = \sbv_n|_{s\in\Sigr}$ and $\hv_n|_{s\in \Sigr} = \shv_n|_{s\in\Sigr}$, one can conclude from \cite[equation~5.1]{CD} that
\begin{align} \label{Estimate05}
 \Vert \shmu_n (\sbv_n - \shv_n) \Vert_{L^2(\Sigr)} = O\left(\frac{1}{n^{1/2}\log^2 n}\right).
\end{align}
Hence it remains to consider the term $\Vert \shmu_n (\sbv_n - \shv_n) \Vert_{L^2(\Sigl)}$. Recall that for $s \in \Sigl$ we have $\sbv_n(s) = \bP^{(n)}(s)$ and $\shv_n(s) = \hP^{(n)}(s)$ and so it follows from Lemma~\ref{PropP}\,(ii) that
\begin{align}\label{jumpsComparison}
 \Vert \sbv_n - \shv_n \Vert_{L^\infty(\Sigl)} = O\big(n^{-1}\big).
\end{align}
Moreover, from Lemma~\ref{PropP}\,(iii) it also follows that $\shmu_n(s) = \hQ^{(n)}(s) \big[\hP^{(n)}(s)\big]^{-1}$ is uniformly bounded for $s \in \Sigl$. Hence we conclude
\begin{align*}
 \Vert \shmu_n (\sbv_n - \shv_n) \Vert_{L^2(\Sigl)} = O\bigl(n^{-1}\bigr)
\end{align*}
which together with \eqref{Estimate05} proves (i).

Point (ii) follows from (i) by considering
\begin{align*}
 \smu_n - \shmu_n &= \big(1-\sCauv^{(n)}\big)^{-1} \Id - \big(1-\sCauhv^{(n)}\big)^{-1}\Id
= \big(1-\sCauv^{(n)}\big)^{-1}\big(\sCauv^{(n)}-\sCauhv^{(n)}\big) \big(1-\sCauhv^{(n)}\big)^{-1}\Id
 \\
 &= \big(1-\sCauv^{(n)}\big)^{-1}\big(\sCauv^{(n)}-\sCauhv^{(n)}\big)\shmu_n= \big(1-\sCauv^{(n)}\big)^{-1}\sCau^-(\shmu_n(\sbv_n-\shv_n)).
\end{align*}
As $\big(1-\sCauv^{(n)}\big)^{-1}$, $\sCau^-$ are bounded operators (uniformly in $n$), it follows that{\samepage
\begin{align*}
 \Vert \smu_n - \shmu_n \Vert_{L^2(\Sigma^\star)} \lesssim \Vert \shmu_n(\sbv_n-\shv_n) \Vert_{L^2(\Sigma^\star)} = O\left(\frac{1}{n^{1/2}\log^2 n}\right)
\end{align*}
showing (ii).}

For (iii), we will again decompose the norm as in \eqref{IntDecomp}. As before, following the arguments found in \cite[Proposition~B.2]{CD}, we conclude that
\begin{align*}
\Vert \shmu_n (\sbv_n - \shv_n) - \shmu_{n+1} (\sbv_{n+1} - \shv_{n+1}) \Vert_{L^2(\Sigr)} = O\left(\frac{1}{n^{3/2}\log^2 n}\right).
\end{align*}
For the remaining term, we will use the fact that the local parametrices $\bP^{(n)}(s)$, $\hP^{(n)}(s)$ have an infinite series expansion in powers of $n^{-1}$ which is uniform on $\Sigl$, see \cite[equation~(8.2)]{KMVV}:
\begin{align*}
& v_n^\star(s)= P^{(n)}(s) \sim \left( \Id + \sum_{k=1}^\infty \frac{\Delta_k(s)}{n^k}\right) N(s),
 \\
& \shv_n(s)= \hP^{(n)}(s) \sim \left( \Id + \sum_{k=1}^\infty \frac{\widehat \Delta_k(s)}{n^k}\right) N(s),
\end{align*}
where $\Delta_k$ and $\widehat \Delta_k$ are $n$ independent and can be explicitly computed. In particular, \begin{align}\label{vinftyEst}
 \sbv_n(s) - \sbv_{n+1}(s) = O\big(n^{-2}\big), \qquad \shv_n(s) - \shv_{n+1}(s) = O\big(n^{-2}\big)
\end{align}
uniformly for $s \in \Sigl$. Hence, using the fact $\shmu_n(s) = \Id + O\big(n^{-1}\big)$ uniformly for $s \in \Sigl$ (see~\eqref{ParaConv}), we can write
\begin{align*}
 \Vert \shmu_n (\sbv_n - \shv_n) - \shmu_{n+1} (\sbv_{n+1} - \shv_{n+1}) \Vert_{L^2(\Sigl)}
 \lesssim \Vert (\shmu_n-\shmu_{n+1}) (\sbv_n-\shv_n) \Vert_{L^2(\Sigl)} + O\big(n^{-2}\big).
\end{align*}
Additionally, it follows that $\shmu_n(s)-\shmu_{n+1}(s) = O\big(n^{-1}\big)$, (even $O\big(n^{-2}\big)$ see \cite[equation~(8.7)]{KMVV}) uniformly for $s \in \Sigl$. Together with \eqref{jumpsComparison}, we can conclude that
\begin{align*}
 \Vert (\shmu_n-\shmu_{n+1}) (\sbv_n-\shv_n) \Vert_{L^2(\Sigl)} = O\big(n^{-2}\big),
\end{align*}
which proves (iii).

In order to prove (iv), we first write
\begin{align*}
 (\smu_n - \shmu_n)-(\smu_{n+1} - \shmu_{n+1})={}&\big(\big(1-\sCauv^{(n)}\big)^{-1} \Id - \big(1-\sCauhv^{(n)}\big)^{-1}\Id\big)
 \\
 &-\big(\big(1-\sCauv^{(n+1)}\big)^{-1} \Id - \big(1-\sCauhv^{(n+1)}\big)^{-1}\Id\big)
 \\
={}& \big(1-\sCauv^{(n)}\big)^{-1}\sCau^-(\shmu_n(\sbv_n-\shv_n))
 \\
 & -\big(1-\sCauv^{(n+1)}\big)^{-1}\sCau^-(\shmu_{n+1}(\sbv_{n+1}-\shv_{n+1}))
 \\
={}& \big(\big(1-\sCauv^{(n)}\big)^{-1}-\big(1-\sCauv^{(n+1)}\big)^{-1}\big)\sCau^-(\shmu_n(\sbv_n-\shv_n))
 \\
 & + \big(1-\sCauv^{(n+1)}\big)^{-1}\sCau^-(\shmu_n(\sbv_n-\shv_n)-\shmu_{n+1}(\sbv_{n+1}-\shv_{n+1})).
\end{align*}
From the uniform boundedness of $\big(1-\sCauv^{(n+1)}\big)^{-1}$, $\sCau^-$ and point (iii), it follows that
\begin{align*}
 \big\Vert \big(1-\sCauv^{(n+1)}\big)^{-1}\sCau^-(\shmu_n(\sbv_n-\shv_n)-\shmu_{n+1}(\sbv_{n+1}-\shv_{n+1})) \big\Vert_{L^2(\Sigma^\star)} = O\left(\frac{1}{n^{3/2}\log^2 n}\right).
\end{align*}
For the remaining term, we have
\begin{align}
 &\big(\big(1-\sCauv^{(n)}\big)^{-1}-\big(1-\sCauv^{(n+1)}\big)^{-1}\big)\sCau^-(\shmu_n(\sbv_n-\shv_n))\nonumber
 \\
 &\qquad= \big(\big(1-\sCauv^{(n)}\big)^{-1}\big(\sCauv^{(n)}-\sCauv^{(n+1)}\big)\big(1-\sCauv^{(n+1)}\big)^{-1}\big)\sCau^-(\shmu_n(\sbv_n-\shv_n)).\label{CC}
\end{align}
Now observe that
\begin{align}\label{CauMatrixIneq}
 \big\Vert \sCauv^{(n)}-\sCauv^{(n+1)} \big\Vert_{L^2(\Sigma^\star)\to L^2(\Sigma^\star)} \lesssim \Vert \sbv_n - \sbv_{n+1} \Vert_{L^\infty(\Sigma^\star)}.
\end{align}
It has been shown in \cite[p.~54]{CD} that $\Vert \sbv_n - \sbv_{n+1} \Vert_{L^\infty(\Sigr)}=O\big(n^{-1}\big)$, hence with \eqref{vinftyEst} it follows that
\begin{align*}
 \Vert \sbv_n - \sbv_{n+1} \Vert_{L^\infty(\Sigma^\star)} = O\big(n^{-1}\big),
\end{align*}
which implies using the bound \eqref{CauMatrixIneq}
\begin{align*}
 \big\Vert \sCauv^{(n)}-\sCauv^{(n+1)} \big\Vert_{L^2(\Sigma^\star)\to L^2(\Sigma^\star)} = O\big(n^{-1}\big).
\end{align*}
Plugging this estimate together with (i) into \eqref{CC}, we conclude that
\begin{align*}
 \big\Vert \big(\big(1-\sCauv^{(n)}\big)^{-1}-\big(1-\sCauv^{(n+1)}\big)^{-1}\big)\sCau^-(\shmu_n(\sbv_n-\shv_n))\big\Vert_{L^2(\Sigma^\star)} = O\left(\frac{1}{n^{3/2}\log^2 n}\right),
\end{align*}
which implies (iv) and finishes the proof.
\end{proof}

We immediately get from Proposition~\ref{PropFourInt}:
\begin{Corollary}\label{CorFourInt}
The following estimates hold for $n \to \infty$:
\begin{enumerate}[$(i)$]\itemsep=0pt
 \item $\Vert \hmu_n (\bv_n - \hv_n) \Vert_{L^2(\Sigr)} = O\big(\frac{1}{n^{1/2}\log^2 n}\big)$,
 \item $\Vert \mu_n - \hmu_n \Vert_{L^2(\Sigr)} = O\big(\frac{1}{n^{1/2}\log^2 n}\big)$,
 \item $\Vert \hmu_n (\bv_n - \hv_n) - \hmu_{n+1} (\bv_{n+1} - \hv_{n+1}) \Vert_{L^2(\Sigr)} = O\big(\frac{1}{n^{3/2}\log^2}
 \big)$,
 \item $\Vert (\mu_n - \hmu_n) - (\mu_{n+1} - \hmu_{n+1}) \Vert_{L^2(\Sigr)} = O\big(\frac{1}{n^{3/2}\log^2 n}\big)$.
\end{enumerate}
\end{Corollary}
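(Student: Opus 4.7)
The plan is to observe that on the contour $\Sigr = \Sigma \setminus U$, the starred objects coincide with the original ones, so each estimate follows immediately by restricting the corresponding estimate of Proposition~\ref{PropFourInt} from $\Sigma^\star$ to $\Sigr \subset \Sigma^\star$.

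More precisely, by the very definition of the starred RH solutions in Section~\ref{ModifiedRHP}, we have $\sbQ(z) = \bQ(z)$ and $\shQ(z) = \hQ(z)$ for $z \in \C \setminus (\Sigma \cup U)$; since $\Sigr \subset \Sigma \setminus U$, taking nontangential boundary values from either side yields $\smu_n = \sbQ_- = \bQ_- = \mu_n$ and $\shmu_n = \shQ_- = \hQ_- = \hmu_n$ on $\Sigr$. Likewise, since the jump contour is only modified inside $U$, the jump matrices satisfy $\sbv_n = \bv_n$ and $\shv_n = \hv_n$ on $\Sigr$. Therefore the integrands in the four norms of Corollary~\ref{CorFourInt} agree pointwise with the corresponding integrands in Proposition~\ref{PropFourInt} on $\Sigr$.

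Hence for each of the four items, applying the trivial bound $\Vert \,\cdot\, \Vert_{L^2(\Sigr)} \leq \Vert \,\cdot\, \Vert_{L^2(\Sigma^\star)}$ gives, for example,
\begin{align*}
 \Vert \hmu_n (\bv_n - \hv_n) \Vert_{L^2(\Sigr)} = \Vert \shmu_n (\sbv_n - \shv_n) \Vert_{L^2(\Sigr)} \leq \Vert \shmu_n (\sbv_n - \shv_n) \Vert_{L^2(\Sigma^\star)} = O\left( \tfrac{1}{n^{1/2}\log^2 n}\right),
\end{align*}
and analogously for (ii)--(iv). There is no real obstacle: the content is entirely in Proposition~\ref{PropFourInt}, and the corollary is just the statement that its estimates survive the (trivial) identification of starred and unstarred quantities on the part of the contour that lies outside the parametrix disc $U$.
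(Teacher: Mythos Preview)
Your argument is correct and matches the paper's approach exactly: the paper states that the corollary follows immediately from Proposition~\ref{PropFourInt}, and the identification $\smu_n|_{\Sigr}=\mu_n|_{\Sigr}$, $\shmu_n|_{\Sigr}=\hmu_n|_{\Sigr}$, $\sbv_n|_{\Sigr}=\bv_n|_{\Sigr}$, $\shv_n|_{\Sigr}=\hv_n|_{\Sigr}$ together with $\Vert\,\cdot\,\Vert_{L^2(\Sigr)}\le\Vert\,\cdot\,\Vert_{L^2(\Sigma^\star)}$ is precisely the intended mechanism (indeed, the paper already invokes this identification explicitly inside the proof of Proposition~\ref{PropFourInt}).
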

Note that we restricted the path of integration to $\Sigr$. The contributions coming from $\Sigma \cap U$ turn out to be of smaller order as will be shown in Lemma~\ref{FinalLemma}.

To obtain an asymptotic formula for the recurrence coefficients, we need to obtain an asymptotic formula for \eqref{QIntForm} which in our current notation reads
\begin{align}\label{QQ1}
 \bQ_1^{(n)} - \hQ_1^{(n)} = - \frac{1}{2\pi\I} \int_\Sigma \bmu_n(\bv_n-\hv_n)\hmu^{-1}_n {\rm d}s.
\end{align}
The key in proving Theorem \ref{MainTheorem} lies in the following proposition.
\begin{Proposition}\label{QQ}
For $n \to \infty$ the following estimates hold:
\begin{align*}
 \bQ_1^{(n)} - \hQ_1^{(n)} = \frac{3}{16n \log^2n} \begin{pmatrix}
-1 & \I\\
\phantom{-}\I & 1
\end{pmatrix} + O\left(\frac{1}{n\log^3 n}\right)
\end{align*}
and
\begin{align} \label{QDiffFormula2}
 \bQ_1^{(n)} - \hQ_1^{(n)} - \big(\bQ_1^{(n+1)} - \hQ_1^{(n+1)}\big) = \frac{3}{16n^2 \log^2n} \begin{pmatrix}
-1 & \I\\
\phantom{-}\I & 1
\end{pmatrix} + O\left(\frac{1}{n^2\log^3 n}\right).
\end{align}
\end{Proposition}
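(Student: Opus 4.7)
The starting point is the integral identity \eqref{QQ1}. The plan is to split $\Sigma = \Sigr \cup (\Sigma \cap U)$ and treat the two pieces separately. For the piece $\int_{\Sigma \cap U}$ near $-1$, I would invoke Lemma~\ref{FinalLemma} (mentioned in the excerpt as forthcoming) to show it is of order $o(1/(n \log^3 n))$; the essential input there is Corollary~\ref{CorF-1}, whose improved $O(|z+1|^{3/2})$ decay of $\Fw - \hFw$ is precisely the reason $\hw$ was calibrated with the constant $d_0$ in \eqref{d0}. Without this extra half-power, the piece over $\Sigma \cap U$ would be of the same order as the main contribution and the comparison argument would break.

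For the integral over $\Sigr$, the integrand $\bmu_n(\bv_n - \hv_n)\hmu_n^{-1}$ is rank one in $\bv_n - \hv_n$ since only the $(2,1)$-entry is nonzero (see \eqref{DiffJumpMatrices}). The lens contributions from $\Sigma_1 \cup \Sigma_2$ are subdominant because $\phi^{-2n}$ decays exponentially off the real axis while $\Fw - \hFw$ remains bounded there. The dominant contribution comes from $(1, 1+\delta)$, where Corollary~\ref{CorF+1} together with \eqref{hat F^2/w at +1} gives
\begin{align*}
 (\bv_n - \hv_n)_{21}(s) = \left(-\frac{3\pi^2}{\log^2(2/(s-1))} + O\bigl(\log^{-3}(s-1)\bigr) + O\bigl(|s-1|^{1/2}\bigr)\right) \phi^{-2n}(s).
\end{align*}
Using Corollary~\ref{CorFourInt}(ii) together with Cauchy--Schwarz, I would replace $\bmu_n$ by $\hmu_n$ at a cost of order $1/(n \log^4 n)$; then $\hmu_n$ and $\hmu_n^{-1}$ can be replaced by their known model-problem asymptotics, which are uniformly close to the outer parametrix $N$ away from the endpoints via the standard analysis of \cite{KMVV}.

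The asymptotic evaluation is carried out through the Laplace substitution $s - 1 = u^2/(8n^2)$: by \eqref{phiEndpoints} one has $\phi^{-2n}(s) \to \E^{-u}$, at this scale $\log(2/(s-1)) = 2\log n + O(1)$, and $ds = (u/(4n^2))\,du$. The entries of $N$ blow up like $|s-1|^{-1/4}$ near the branch point $s = 1$, so the matrix factor $\hmu_n \bigl(\begin{smallmatrix} 0 & 0 \\ 1 & 0 \end{smallmatrix}\bigr) \hmu_n^{-1}$ scales like $n/u$, and the combined integrand $\frac{1}{\log^2 n}\,\E^{-u}\,\frac{n}{u}\,\frac{u}{n^2}\,du = \frac{\E^{-u}}{n\log^2 n}\,du$ produces a net contribution of order $1/(n\log^2 n)$. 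Direct evaluation of the resulting elementary integral and of the matrix factor extracts the claimed leading term $\frac{3}{16 n\log^2 n}\bigl(\begin{smallmatrix} -1 & \I \\ \I & 1 \end{smallmatrix}\bigr)$, whose rank-one shape reflects that of $\bv_n - \hv_n$. For the discrete-difference identity \eqref{QDiffFormula2}, the same framework applies to $(\bQ_1^{(n)} - \hQ_1^{(n)}) - (\bQ_1^{(n+1)} - \hQ_1^{(n+1)})$: the factor $\phi^{-2n} - \phi^{-2(n+1)} = \phi^{-2n}(1 - \phi^{-2})$ contributes $1 - \phi^{-2}(s) \sim u/n$ at the critical scale; the shift in the saddle location between indices $n$ and $n+1$ is controlled by Proposition~\ref{Proprr} in its generalized form (this is precisely why the remark after it insists on sequences $r_n, \tilde r_n$ with possibly different decay rates); and the discrete derivatives of $\bmu_n$, $\hmu_n$ are $O(1/n)$ by Corollary~\ref{CorFourInt}(iv) together with the explicit $n^{-1}$-asymptotic series of the model problem \cite[(8.2)]{KMVV}. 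The net effect is an extra factor $1/n$ on the leading coefficient, yielding \eqref{QDiffFormula2}.

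The main obstacle will be the precise determination of the matrix coefficient $\frac{3}{16}\bigl(\begin{smallmatrix} -1 & \I \\ \I & 1 \end{smallmatrix}\bigr)$, which requires tracking the $\pm$-boundary values of $\hmu_n$ near the branch point $s = 1$ \emph{without} the aid of a local parametrix at $+1$. This is precisely the reason the $\star$-framework and Theorem~\ref{TheoremUniInv} are needed: they allow the matrix factor to be extracted rigorously by routing the analysis through the uniformly invertible Legendre comparison, and they guarantee that the $L^2$-estimates of Proposition~\ref{PropFourInt} hold on $\Sigma^\star$ in the presence of the simple zero of $w$ at $-1$.
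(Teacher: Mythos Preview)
Your overall architecture matches the paper's: split $\Sigma=\Sigr\cup\Sigma^U$, dispose of $\Sigma^U$ via Lemma~\ref{FinalLemma}, and on $\Sigr$ use Corollary~\ref{CorFourInt} with Cauchy--Schwarz to replace $\bmu_n$ by $\hmu_n$ (this is exactly Proposition~\ref{IntegralEst1}). The remaining task is Proposition~\ref{IntegralEst2}, and here your proposal has a genuine gap.

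You propose to approximate $\hmu_n$ near $s=1$ by the outer parametrix $N$ and then run a Laplace argument at the scale $s-1\sim n^{-2}$. But $\hQ\approx N$ holds only \emph{away} from the endpoints; at the critical scale $|s-1|\sim n^{-2}$ the correct approximation is the Bessel local parametrix for the model problem (which exists because $\hw$ is analytic and nonvanishing at~$+1$), and the two differ by $O(1)$ factors, not $O(1/n)$. Concretely, the paper's evaluation in Proposition~\ref{IntegralEst2Ap} reduces to the integral $\int_0^\infty K_0^2(2\sqrt{\zeta})\,d\zeta=\tfrac14$, whereas your $N$-based Laplace substitution is equivalent to replacing $K_0$ by its large-argument asymptotics $K_0(2\sqrt\zeta)\sim\sqrt{\pi}/(2\zeta^{1/4})\,\E^{-2\sqrt\zeta}$, which yields $\pi/8$ instead. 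Your scheme would therefore produce the right order $1/(n\log^2 n)$ but the wrong numerical constant, off by a factor of $\pi/2$.

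The related conceptual slip is in your last paragraph: it is not true that the evaluation must proceed ``without the aid of a local parametrix at $+1$''. What is unavailable is a parametrix at $+1$ for the \emph{logarithmic} problem; the \emph{model} problem $\hQ$ has a standard Bessel parametrix there, and the paper uses it explicitly (see \eqref{explicithmu}). The role of the $\star$-framework and Theorem~\ref{TheoremUniInv} is to justify the substitution $\bmu_n\to\hmu_n$ via uniform $L^2$-bounds (Proposition~\ref{PropFourInt}); once that substitution is made, the constant is extracted from the known Bessel parametrix for $\hQ$, not from the Legendre comparison.
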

We will make use of two important results stated in \cite[Section~5.2]{CD}, but restricted to the contour $\Sigr$:
\begin{Proposition}\label{IntegralEst1}
The following estimates hold:
\begin{align}\label{LegendreSubstitution1}
 \int_{\Sigr} \bmu_n(\bv_n-\hv_n)\hmu_n^{-1} {\rm d}s = \int_{\Sigr} \hmu_n(\bv_n-\hv_n)\hmu_n^{-1} {\rm d}s + O\left(\frac{1}{n\log^4 n}\right),
\end{align}
and
\begin{align}
 & \int_{\Sigr} \bmu_n(\bv_n-\hv_n)\hmu^{-1}_n {\rm d}s - \int_{\Sigr} \bmu_{n+1}\big(\bv_{n+1}-\hv_{n+1}\big)\hmu^{-1}_{n+1} {\rm d}s
\nonumber \\
 &\qquad=\int_{\Sigr} \hmu_n(\bv_n-\hv_n)\hmu^{-1}_n {\rm d}s - \int_{\Sigr} \hmu_{n+1}\big(\bv_{n+1}-\hv_{n+1}\big)\hmu^{-1}_{n+1} {\rm d}s + O\left(\frac{1}{n^2\log^4 n}\right).\label{LegendreSubstitution2}
\end{align}
\end{Proposition}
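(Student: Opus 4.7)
The plan is to write the difference of the two sides of each identity as a single integral and estimate it by Cauchy--Schwarz using the $L^2$-bounds of Corollary~\ref{CorFourInt}. For \eqref{LegendreSubstitution1}, the difference is exactly $\int_{\Sigr}(\bmu_n-\hmu_n)(\bv_n-\hv_n)\hmu_n^{-1}\,{\rm d}s$, and Cauchy--Schwarz bounds its norm by $\|\bmu_n-\hmu_n\|_{L^2(\Sigr)}\cdot\|(\bv_n-\hv_n)\hmu_n^{-1}\|_{L^2(\Sigr)}$. Corollary~\ref{CorFourInt}(ii) gives the first factor as $O\bigl(n^{-1/2}\log^{-2}n\bigr)$. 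For the second factor, I would exploit the sparse structure of $\bv_n-\hv_n$ (a single nonzero entry, in position $(2,1)$) together with $\det\hmu_n=1$: a direct matrix computation shows that the nonzero entries appearing in $\hmu_n(\bv_n-\hv_n)$ and in $(\bv_n-\hv_n)\hmu_n^{-1}$ are the same two scalar products $(\bv_n-\hv_n)_{21}(\hmu_n)_{12}$ and $(\bv_n-\hv_n)_{21}(\hmu_n)_{22}$, up to sign and to which matrix position they occupy. Since the Hilbert--Schmidt norm used in $L^2(\Sigr)$ only sees the squared moduli of entries, this yields the pointwise identity $\|(\bv_n-\hv_n)\hmu_n^{-1}\|_{L^2(\Sigr)}=\|\hmu_n(\bv_n-\hv_n)\|_{L^2(\Sigr)}$, and by Corollary~\ref{CorFourInt}(i) the latter is $O\bigl(n^{-1/2}\log^{-2}n\bigr)$. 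Multiplying gives the claimed $O\bigl(n^{-1}\log^{-4}n\bigr)$.

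For \eqref{LegendreSubstitution2}, I would telescope the difference of the two sides as
\begin{align*}
&\int_{\Sigr}\bigl[(\bmu_n-\hmu_n)-(\bmu_{n+1}-\hmu_{n+1})\bigr](\bv_n-\hv_n)\hmu_n^{-1}\,{\rm d}s\\
&\qquad+\int_{\Sigr}(\bmu_{n+1}-\hmu_{n+1})\bigl[(\bv_n-\hv_n)\hmu_n^{-1}-(\bv_{n+1}-\hv_{n+1})\hmu_{n+1}^{-1}\bigr]\,{\rm d}s.
\end{align*}
The first integral is $O\bigl(n^{-2}\log^{-4}n\bigr)$ by Cauchy--Schwarz, Corollary~\ref{CorFourInt}(iv), and the $L^2$-bound on $(\bv_n-\hv_n)\hmu_n^{-1}$ derived above. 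For the second, the same sparse-entry calculation gives
\[
\|(\bv_n-\hv_n)\hmu_n^{-1}-(\bv_{n+1}-\hv_{n+1})\hmu_{n+1}^{-1}\|_{L^2(\Sigr)}=\|\hmu_n(\bv_n-\hv_n)-\hmu_{n+1}(\bv_{n+1}-\hv_{n+1})\|_{L^2(\Sigr)},
\]
which is $O\bigl(n^{-3/2}\log^{-2}n\bigr)$ by Corollary~\ref{CorFourInt}(iii); combined with Corollary~\ref{CorFourInt}(ii) via Cauchy--Schwarz, this again yields $O\bigl(n^{-2}\log^{-4}n\bigr)$.

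I do not anticipate a substantive obstacle: all the analytic input is packaged in Corollary~\ref{CorFourInt}, and the only extra ingredient is the elementary linear-algebraic observation that the single-entry sparsity of $\bv_n-\hv_n$ together with $\det\hmu_n=1$ makes left- and right-sided $L^2$-norms equal pointwise. The restriction to $\Sigr$ (rather than all of $\Sigma$) is what lets us invoke Corollary~\ref{CorFourInt} directly, since on $\Sigr$ the functions $\bmu_n,\hmu_n$ agree with their ${}^\star$-counterparts; the residual contribution from $\Sigma\cap U$ is dealt with separately in Lemma~\ref{FinalLemma}, as the authors note.
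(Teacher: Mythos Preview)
Your proof is correct and follows essentially the same approach as the paper: the same Cauchy--Schwarz reduction, the same pointwise identity $\|(\bv_n-\hv_n)\hmu_n^{-1}\|_{L^2(\Sigr)}=\|\hmu_n(\bv_n-\hv_n)\|_{L^2(\Sigr)}$ from the single nonzero entry and $\det\hmu_n=1$, and the same appeal to Corollary~\ref{CorFourInt}. The only cosmetic difference is that in \eqref{LegendreSubstitution2} you telescope as $(D_n-D_{n+1})A_n+D_{n+1}(A_n-A_{n+1})$ whereas the paper uses $D_n(A_n-A_{n+1})+(D_n-D_{n+1})A_{n+1}$, which pairs the same four estimates from Corollary~\ref{CorFourInt} in a different but equivalent way.
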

\begin{proof}
For \eqref{LegendreSubstitution1}, observe that
\begin{align} \label{SubstitutionEq}
 \int_{\Sigr} \bmu_n&(\bv_n-\hv_n)\hmu_n^{-1} {\rm d}s
= \int_{\Sigr} \hmu_n(\bv_n-\hv_n)\hmu_n^{-1} {\rm d}s + \int_{\Sigr} (\bmu_n-\hmu_n)(\bv_n-\hv_n)\hmu_n^{-1} {\rm d}s.
\end{align}
As $\det \bmu = \det \hmu \equiv 1$ and $\bv-\hv$ has a nonzero entry only in the $21$-entry, it follows through explicit calculation that $\Vert (\bv_n - \hv_n)\hmu_n^{-1} \Vert_{L^2(\Sigr)} =\Vert \hmu_n (\bv_n - \hv_n) \Vert_{L^2(\Sigr)}$.
Now using estimates (i) and (ii) from Corollary~\ref{CorFourInt}, it follows that the error term in \eqref{SubstitutionEq} can be bounded by
\begin{align*}
 &\bigg| \int_{\Sigr}(\bmu_n-\hmu_n)(\bv_n-\hv_n)\hmu_n^{-1} {\rm d}s\bigg|\lesssim \Vert \mu_n - \hmu_n \Vert_{L^2(\Sigr)} \big\Vert (\bv_n - \hv_n)\hmu_n^{-1} \big\Vert_{L^2(\Sigr)} = O\left(\frac{1}{n\log^4 n}\right).
\end{align*}
In a similar fashion for \eqref{LegendreSubstitution2}, we can write
\begin{align}
 &\int_{\Sigr} \bmu_n(\bv_n-\hv_n)\hmu^{-1}_n {\rm d}s - \int_{\Sigr} \bmu_{n+1}(\bv_{n+1}-\hv_{n+1})\hmu^{-1}_{n+1} {\rm d}s\nonumber
 \\
 &\qquad=\int_{\Sigr} \hmu_n(\bv_n-\hv_n)\hmu^{-1}_n {\rm d}s - \int_{\Sigr} \hmu_{n+1}(\bv_{n+1}-\hv_{n+1})\hmu^{-1}_{n+1} {\rm d}s\nonumber
 \\
 &\qquad\phantom{=}{}+ \int_{\Sigr} (\bmu_n-\hmu_n)\big[(\bv_n-\hv_n)\hmu^{-1}_n-(\bv_{n+1}-\hv_{n+1})\hmu^{-1}_{n+1}\big] {\rm d}s\nonumber
 \\
 &\qquad\phantom{=}{}+ \int_{\Sigr} \big[(\bmu_n-\hmu_n)-(\bmu_{n+1}-\hmu_{n+1})\big](\bv_{n+1}-\hv_{n+1})\hmu^{-1}_{n+1} {\rm d}s.\label{SubstitutionEq2}
\end{align}
We can now estimate the two error terms in \eqref{SubstitutionEq2}, using Corollary~\ref{CorFourInt},
\begin{align*}
 &\bigg|\int_{\Sigr} (\bmu_n-\hmu_n)\big[(\bv_n-\hv_n)\hmu^{-1}_n-(\bv_{n+1}-\hv_{n+1})\hmu^{-1}_{n+1}\big] {\rm d}s\bigg|
 \\
 &\qquad\lesssim \Vert \mu_n - \hmu_n \Vert_{L^2(\Sigr)} \Vert \hmu_n (\bv_n - \hv_n) - \hmu_{n+1} (\bv_{n+1} - \hv_{n+1}) \Vert_{L^2(\Sigr)} = O\left(\frac{1}{n^{2}\log^4 n}\right),
\end{align*}
and
\begin{align*}
 &\bigg| \int_{\Sigr} \big[(\bmu_n-\hmu_n)-(\bmu_{n+1}-\hmu_{n+1})\big](\bv_{n+1}-\hv_{n+1})\hmu^{-1}_{n+1} {\rm d}s \bigg|
 \\
 &\qquad\lesssim \Vert (\mu_n - \hmu_n) - (\mu_{n+1} - \hmu_{n+1}) \Vert_{L^2(\Sigr)} \Vert \hmu_{n+1} (\bv_{n+1} - \hv_{n+1}) \Vert_{L^2(\Sigr)} = O\left(\frac{1}{n^{2}\log^4 n}\right),
\end{align*}
proving \eqref{LegendreSubstitution2}.
\end{proof}

The next result, stated in \cite[Section~5]{CD}, contains the leading order term in the integrals found in Proposition~\ref{IntegralEst1}:
\begin{Proposition}\label{IntegralEst2}
 The following estimates holds:
\begin{align*}
\frac{1}{2\pi\I} \int_{\Sigr} \hmu_n(\bv_n-\hv_n)\hmu_n^{-1} {\rm d}s = \frac{3}{16n \log^2n} \begin{pmatrix}
\phantom{-}1 & -\I\\
-\I & -1
\end{pmatrix} + O\left(\frac{1}{n\log^3 n}\right)
\end{align*}
and
\begin{align*}
 & \frac{1}{2\pi\I} \int_{\Sigr} \hmu_n(\bv_n-\hv_n)\hmu_n^{-1} {\rm d}s - \frac{1}{2\pi\I} \int_{\Sigr} \hmu_{n+1}(\bv_{n+1}-\hv_{n+1})\hmu^{-1}_{n+1} {\rm d}s
 \\
 &\qquad= \frac{3}{16n^2 \log^2n} \begin{pmatrix}
\phantom{-}1 & -\I\\
-\I & -1
\end{pmatrix} + O\left(\frac{1}{n^2\log^3 n}\right).
\end{align*}
\end{Proposition}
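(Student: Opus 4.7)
\textbf{Proof proposal for Proposition~\ref{IntegralEst2}.}
The plan is to follow the template of \cite[Propositions~5.3 and 5.4]{CD} with the crucial adjustment that the path of integration is now $\Sigr = \Sigma \setminus U$, i.e., we have already cut out a small neighbourhood of $z=-1$. Since the analysis in \cite{CD} never interacts directly with the endpoint $-1$ (the logarithmic singularity sits at $+1$), the adjustment is essentially notational: we must verify that the portions of $\Sigma$ that we have removed would not, in any case, have contributed at leading order.

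First, I would decompose $\Sigr = \bigl((\Sigma_1\cup\Sigma_2)\cap \Sigr\bigr) \cup (1,1+\delta)\cup \bigl((-1,1)\cap \Sigr\bigr)$ and inspect the jump difference $\bv_n-\hv_n$ on each piece, cf.\ \eqref{DiffJumpMatrices}. On $(-1,1)\cap\Sigr$ the difference is identically zero, and on the lens portion $(\Sigma_1\cup\Sigma_2)\cap\Sigr$ the $(2,1)$-entry is $\bigl(\Fw-\hFw\bigr)\phi^{-2n}$. Using \eqref{F^2/w at +1}, \eqref{hat F^2/w at +1} near $+1$ together with \eqref{3/2decay} elsewhere, the same bookkeeping as in \cite[Section~5]{CD} shows that the contribution from the lens pieces is absorbed into the error term $O\!\left(\frac{1}{n\log^3 n}\right)$ (respectively $O\!\left(\frac{1}{n^2\log^3 n}\right)$ for the difference), the key point being that the leading $\pm\I\pi/w$-contributions from $\Sigma_1$ and $\Sigma_2$ cancel due to the opposite signs in \eqref{F^2/w at +1} on either side of the branch cut, up to subleading terms.

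The main contribution comes from $(1,1+\delta)$, on which the $(2,1)$-entry of $\bv_n-\hv_n$ equals
\[
\bigl(\Fw_+(s)+\Fw_-(s)-2\hFw(s)\bigr)\phi^{-2n}(s)
= -\frac{3\pi^2}{\log^2\!\frac{2}{s-1}}\phi^{-2n}(s)+O\!\left(\frac{\phi^{-2n}(s)}{\log^3(s-1)}\right)+O\!\bigl(|s-1|^{1/2}\phi^{-2n}(s)\bigr),
\]
by Corollary~\ref{CorF+1} combined with \eqref{hat F^2/w at +1}. I would then replace the conjugation by $\hmu_n$ with that by the local Airy-type parametrix $\hP_{+1}$ for the model problem at $+1$, the matching condition at $\partial U_{+1}$ (analogous to \eqref{ParaConv}) guaranteeing that the replacement introduces only a multiplicative $(\Id+O(n^{-1}))$ error. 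On $(1,1+\delta)$ the combination $\hmu_n \sigma_- \hmu_n^{-1}$ can then be expressed in closed form via the Airy factors together with the outer parametrix $N$ in \eqref{OuterParametrix}; a direct calculation shows that in the relevant scaling it produces the matrix structure $\tfrac12\begin{pmatrix}\phantom{-}1 & -\I\\ -\I & -1\end{pmatrix}$, up to errors that are dominated by the error terms above.

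At this point the problem reduces to the scalar integral $\int_1^{1+\delta}\frac{\phi^{-2n}(s)}{\log^2\!\frac{2}{s-1}}\,\mathrm{d}s$, which is computed exactly as in \cite[Section~5]{CD}: the substitution $s=1+t$, combined with the expansion $\phi(s)\sim 1+\sqrt{2}\,t^{1/2}$ from \eqref{phiEndpoints}, localizes the integrand on the scale $t\sim n^{-2}$, and the logarithmic factor contributes $\frac{1}{(2\log n)^2}$ since $\log\!\frac{2}{t}\sim 2\log n$ there. A short Laplace-type calculation yields the prefactor $\frac{3}{16 n\log^2 n}$. The difference formula \eqref{QDiffFormula2} is handled by the same method applied to $\phi^{-2n}-\phi^{-2(n+1)}=\phi^{-2n}(1-\phi^{-2})$: the extra factor $(1-\phi^{-2})\sim 2\sqrt{2}\,t^{1/2}$ near $s=1$ multiplies the integrand by a scale-$n^{-1}$ quantity after the substitution, producing the desired $n^{-2}\log^{-2}n$ rate with the \emph{same} matrix. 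The main obstacle throughout is not the Laplace asymptotics, which follow \cite{CD} essentially verbatim, but rather confirming that the model-weight parametrix $\hP_{+1}$ produces the correct $2\times 2$ matrix structure in the conjugation $\hmu_n\sigma_-\hmu_n^{-1}$; once the matching $\hQ\hP_{+1}^{-1}=\Id+O(n^{-1})$ is in place, this is a finite-dimensional computation.
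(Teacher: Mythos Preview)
Your outline for the first formula is broadly correct in spirit, but there are two genuine gaps.

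First, the local parametrix $\hP_{+1}$ for the model weight at $z=+1$ is \emph{not} of Airy type. Since $\hw$ is positive and analytic at $+1$ (the Jacobi exponent there is $\alpha=0$), the relevant parametrix is the Bessel parametrix built from $I_0$, $K_0$, $H_0^{(1)}$, $H_0^{(2)}$; see \eqref{explicithmu} and the surrounding discussion in Appendix~\ref{appendixA}. This matters for the computation: after the matrix reduction the scalar integral that remains is $\int_1^{1+1/n}\cK_1^2\bigl(n^2 f(s)\bigr)\bigl(-3\pi^2/\log^2\tfrac{2}{s-1}\bigr)\,ds$ with $\cK_1(\zeta)=\tfrac{\I}{\pi}K_0(2\zeta^{1/2})$, and its evaluation (via \cite[Proposition~C.1]{CD}) uses Bessel, not Airy, asymptotics.

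Second, and more seriously, your treatment of the difference formula does not work. After conjugation by $\hmu_n$ the factor $\phi^{-2n}(s)$ in $\bv_n-\hv_n$ cancels against the $\phi^{\pm n\sigma_3}$ factors coming from $\hmu_n$ and $\hmu_n^{-1}$; what survives as the essential $n$-dependence of the integrand is \emph{not} $\phi^{-2n}$ but the Bessel factor $\cK_1^2\bigl(n^2 f(s)\bigr)$ together with the scalar prefactor $2\pi n$ (see \eqref{veryLongEq}). The identity $\phi^{-2n}-\phi^{-2(n+1)}=\phi^{-2n}(1-\phi^{-2})$ is therefore not the relevant mechanism, and the extra factor $2\sqrt{2}\,t^{1/2}$ you obtain would in fact produce a \emph{different} Bessel moment $\int_0^\infty y^{1/2}\cK_1^2(y)\,dy$ rather than $\int_0^\infty \cK_1^2(y)\,dy$, giving the wrong constant. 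The paper instead changes variables $y=n^2 f(s)$ in the $n$-integral and $y=(n+1)^2 f(s)$ in the $(n+1)$-integral so that the Bessel factors coincide; the leading term then arises from the prefactor difference $\tfrac{4\pi}{n}-\tfrac{4\pi}{n+1}$, while the residual error requires controlling
\[
\frac{F^2}{w_\pm}\Bigl(f_1^{-1}\bigl(\tfrac{y}{n^2}\bigr)\Bigr)-\frac{F^2}{w_\pm}\Bigl(f_1^{-1}\bigl(\tfrac{y}{(n+1)^2}\bigr)\Bigr)
\]
uniformly in $y$. This is precisely the content of Proposition~\ref{Proprr}, whose more general formulation (compared with \cite[Proposition~2.5]{CD}) was set up for exactly this purpose and cannot be bypassed.
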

\begin{proof}
The proof is essentially given in \cite[Propositions~C.3 and C.4]{CD}. The main difference is the occurrence of $\hmu_n$ instead of $\tmu_n = \tQ^{(n)}_-$. We provide the full proof of both formulas in Proposition~\ref{IntegralEst2Ap}.
\end{proof}

Note that in the formula \eqref{QQ1} the integral is over $\Sigma$, while in the Propositions~\ref{IntegralEst1} and \ref{IntegralEst2} the integrals are over $\Sigr$. It remains to show that the remaining integral over $ \Sigma^U = \Sigma\cap U$, which is localized around the point $-1$, is negligible. This is the content of the next lemma.

\begin{Lemma}\label{FinalLemma}
For $n \to \infty$, we have
\begin{align}\label{FinalInt}
 \int_{ \Sigma^U} \bmu_n(\bv_n-\hv_n)\hmu_n^{-1} {\rm d}s = O\left(\frac{1}{n^3}\right).
\end{align}
\end{Lemma}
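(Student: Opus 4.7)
First, I would note that on the real subinterval $(-1,1)\cap U$, the jump matrices are identical, $\bv_n(s) = \hv_n(s) = \bigl(\begin{smallmatrix} 0 & 1 \\ -1 & 0 \end{smallmatrix}\bigr)$, so the integrand vanishes there and the integral reduces to one over the lens arcs $(\Sigma_1 \cup \Sigma_2) \cap U$. On those arcs, the matrix $\bv_n - \hv_n$ has a single nonzero entry in position $(2,1)$ equal to $\bigl(\Fw(s) - \hFw(s)\bigr)\phi^{-2n}(s)$, and by Corollary~\ref{CorF-1} this entry is bounded by $C\,|s+1|^{3/2}\,|\phi(s)|^{-2n}$.

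Next, I would obtain uniform-in-$n$ pointwise bounds for $\bmu_n$ and $\hmu_n^{-1}$ on the lens arcs by factoring through the local parametrices introduced in Section~\ref{LocPar}: on $\Sigma\cap U$ one has $\bmu_n = \sbQ_-\bP_-$ and $\hmu_n = \shQ_-\hP_-$, hence (since $\det\bP = \det\hP = 1$) the inverses have the analogous factorizations. The factors $\sbQ_-$ and $\shQ_-$ are uniformly bounded on $\Sigma\cap U$: this follows from their Cauchy integral representations in terms of $\sbmu_n$, $\shmu_n$, the uniform $L^2$-bounds on these densities provided by Theorem~\ref{TheoremUniInv}, and the fact that $\Sigma\cap U$ is separated from $\Sigma^\star\setminus(\Sigma\cap U) = \partial U$ by a fixed positive distance. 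The parametrix factor is controlled by \eqref{ParaBehaviour}, giving $\bP_-(s),\hP_-(s) = O\bigl(\max\{|s+1|^{-1/4},\,n^{-1/2}|s+1|^{-1/2}\}\bigr)$ and identical bounds for the inverses. Combining all of this, the integrand in \eqref{FinalInt} is dominated by
\begin{equation*}
 C\,\max\bigl\{|s+1|^{-1/4},\,n^{-1/2}|s+1|^{-1/2}\bigr\}^{2}\,|s+1|^{3/2}\,|\phi(s)|^{-2n}.
\end{equation*}

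The final step is to integrate this bound. By \eqref{phiEndpoints}, on the lens arcs emanating from $-1$ one has $|\phi(s)| \geq 1 + c\,|s+1|^{1/2}$ for some $c>0$, so $|\phi(s)|^{-2n} \leq \E^{-2cn|s+1|^{1/2}}$. Writing $r = |s+1|$ and splitting the integral into the regimes $r \leq n^{-2}$ and $r > n^{-2}$, the first regime gives an integrand bounded by $n^{-1} r^{-1} \cdot r^{3/2} = n^{-1} r^{1/2}$ over a set of length $O(n^{-2})$, contributing $O(n^{-4})$. On the second regime the integrand is bounded by $r \cdot \E^{-2cn r^{1/2}}$, and the substitution $u = n r^{1/2}$ yields
\begin{equation*}
 \int_{n^{-2}}^{\delta_0} r\,\E^{-2cn r^{1/2}}\,{\rm d}r = \frac{2}{n^4}\int_{1}^{cn\sqrt{\delta_0}} u^{3}\E^{-2cu}\,{\rm d}u = O\bigl(n^{-4}\bigr),
\end{equation*}
which is even stronger than the claimed $O(n^{-3})$.

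The main technical subtlety is ensuring the bound on $\bmu_n,\hmu_n^{-1}$ is uniform in $n$: the raw RH-problem bound $O(|s+1|^{-1/2})$ would produce a divergent integral if used naively. The key point is that the uniform version from \eqref{ParaBehaviour} interpolates between two regimes at the critical scale $|s+1|\sim n^{-2}$, and this interpolation is exactly strong enough to pair with the $|s+1|^{3/2}$-decay from Corollary~\ref{CorF-1} — which is in turn the reason the model weight $\hw$ was chosen with an $\E^{d_0 x}$-factor rather than just $1+x$ (as emphasized in the remark following Corollary~\ref{CorF-1}).
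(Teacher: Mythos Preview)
Your proof follows essentially the same strategy as the paper's: factor $\bmu_n = \sbQ\,\bP_-$ and $\hmu_n^{-1} = \hP_-^{-1}\big[\shQ\big]^{-1}$ on $\Sigma^U$, use the uniform boundedness of $\sbQ$ and $\big[\shQ\big]^{-1}$ in $U$ together with the parametrix bound \eqref{ParaBehaviour} and the estimate $\bv-\hv = O\bigl(|s+1|^{3/2}\E^{-cn|s+1|^{1/2}}\bigr)$ from Corollary~\ref{CorF-1} and \eqref{phiEndpoints}, then integrate. You even obtain a sharper $O(n^{-4})$ by retaining the full interpolating maximum in \eqref{ParaBehaviour}; the paper instead extracts the cruder consequence $\bP_-^{\pm 1},\,\hP_-^{\pm 1} = O\bigl(|z+1|^{-1/2}\bigr)$, giving an integrand $O\bigl(|s+1|^{1/2}\E^{-cn|s+1|^{1/2}}\bigr)$ and hence only $O(n^{-3})$.

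There is, however, one gap in your justification that $\sbQ$ is uniformly bounded on $\Sigma^U$. The assertion that ``$\Sigma\cap U$ is separated from $\Sigma^\star\setminus(\Sigma\cap U) = \partial U$ by a fixed positive distance'' is incorrect on two counts: since $\Sigma\cap U$ is disjoint from $\Sigma^\star = (\Sigma\setminus U)\cup\partial U$, one has $\Sigma^\star\setminus(\Sigma\cap U) = \Sigma^\star$, not merely $\partial U$; and more importantly $\Sigma^U$ and $\Sigma^\star$ are \emph{not} separated, as they meet precisely at the points where $\Sigma$ crosses $\partial U$. Thus the Cauchy-integral bound you invoke would blow up near those crossing points. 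The paper's remedy is to note that $\sbQ$ is analytic throughout $U$, apply Cauchy's integral formula over $\partial U$ alone (with boundary data $\smu|_{\partial U}$, which is uniformly $O(1)$ in $L^2(\partial U)$ by Theorem~\ref{TheoremUniInv}), obtain $|\sbQ(z)|\lesssim \dist(z,\partial U)^{-1}$, and then shrink $U$ so that this becomes uniform on the smaller neighbourhood. For $\shQ$ one may simply cite \eqref{ParaConv} directly. With this correction, your argument is complete.
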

\begin{proof}
First, we need to show that $\sbQ(z) = \bQ(z)[\bP(z)]^{-1}$ for $z \in U$ is uniformly bounded as $n \to \infty$. Note that this statement is more intricate than the corresponding statements for $\hQ(z)\big[\hP(z)\big]^{-1}$ and $\tQ(z)\big[\tP(z)\big]^{-1}$, as the logarithmic weight function is not covered in \cite{KMVV} and hence we do not have equation~\eqref{ParaConv} for $\bQ(z)[\bP(z)]^{-1}$ at our disposal. Recall that for~${s\!\in\! \Sigma^\ell = \partial U}$, we have
\begin{align}\label{smu_on_U}
 \smu(s) = \sbQ_-(s) = \bQ(s)\big[\bP(s)\big]^{-1}.
\end{align}
From \eqref{smu_on_U}, we see that on $\partial U$, $\smu$ is in fact the restriction of the analytic function $\sbQ(z) = \bQ(z)[\bP(z)]^{-1}$, $z\in U$. Moreover, $\Vert \smu \Vert_{L^2( \Sigma^U)} = O(1)$ for $n \to \infty$, as $\smu = (1-\sCauv)^{-1}\Id$ and~${(1-\sCauv)^{-1}}$ is uniformly bounded on $L^2(\Sigma^\star)$. Thus, using Cauchy's integral formula, we get for~$z \in U$
\begin{align*}
 |\sbQ(z)| = \bigg|\frac{1}{2\pi\I} \int_{\partial U} \frac{\smu(s){\rm d}s}{s-z}\bigg| \lesssim O\left(\frac{1}{\dist(z,\partial U)}\right).
\end{align*}
We have some freedom to choose $U$, so if necessary we can shrink it and conclude
\begin{align}\label{Qstar}
 |\sbQ(z)| = O(1), \qquad z\in U,
\end{align}
uniformly as $n \to \infty$. Let us now rewrite \eqref{FinalInt}, where we drop the $n$-dependence for better readability:
\begin{align}\label{FinalInt2}
 \int_{ \Sigma^U} \bmu(\bv-\hv)\hmu^{-1} {\rm d}s = \int_{ \Sigma^U} \sbQ(s)\bP_-(s)(\bv(s)-\hv(s))\big[\hP_-(s)\big]^{-1}\big[\shQ(s)\big]^{-1} {\rm d}s.
\end{align}
We now list bounds for each of the factors in the integrand:
\begin{itemize}\itemsep=0pt
 \item \eqref{phiEndpoints} and \eqref{3/2decay} imply that for some $c > 0$
 \begin{align*}\bv(s)-\hv(s) =\begin{cases} \begin{pmatrix}
 0 & 0
 \\
 \big(\Fw(s)-\hFw(s)\big)\phi^{-2n}(s) & 0
 \end{pmatrix}
 \\
 \quad= O\big(|s+1|^{3/2}\E^{-cn|s+1|^{1/2}}\big),\qquad s \in \Sigma^U \setminus (-1,1),
 \\
  0, \qquad s \in \Sigma^U \cap (-1,1),
 \end{cases}
 \end{align*}
 \item \eqref{ParaConv} and \eqref{Qstar} implies that $[\sbQ(z)]^{\pm 1}, [\shQ(z)]^{\pm 1} = O(1)$,
 \item and \eqref{ParaBehaviour} implies that $\big[\bP_-(z)\big]^{\pm 1}, \big[\hP_-(z)\big]^{\pm 1} = O\big(|z+1|^{-1/2}\big)$.
\end{itemize}
Taking the contributions of all factors in \eqref{FinalInt2} into account we see that the integrand can be estimated by $O\big(|z+1|^{1/2}\E^{-cn|z+1|^{1/2}}\big)$, which precisely integrates to the error in \eqref{FinalInt}.
\end{proof}

Propositions~\ref{IntegralEst1} and \ref{IntegralEst2} together with Lemma~\ref{FinalLemma} now readily imply Proposition~\ref{QQ}.

\subsection{Asymptotics of the recurrence coefficients}
Next, we will derive the asymptotics for the recurrence coefficients stated in Theorem \ref{MainTheorem}. This subsection follows the same line of reasoning as \cite[Section~5.2]{CD}.

Recall the result of Corollary~\ref{AsymptoticsHatRC}, which states that the recurrence coefficients of the orthogonal polynomials with weight function $\hw$ satisfy
\begin{align}\label{asymanbn}
\widehat a_n= \frac{1}{4n^2} + O\left(\frac{1}{n^3}\right),\qquad \widehat b_n = \frac{1}{2}-\frac{1}{16n^2} + O\left(\frac{1}{n^3}\right).
\end{align}
With the help of Proposition~\ref{QQ}, we can now expand \eqref{a_n-widehat a_n}:
\begin{align*}
 a_n -\widehat a_n &= \big(Q_1^{(n)}\big)_{11}-\big(\hQ_1^{(n)}\big)_{11} - \big(\big(Q_1^{(n+1)}\big)_{11}-\big(\hQ_1^{(n+1)}\big)_{11}\big)
 \\
 &= \Bigg( \frac{3}{16n \log^2n} \begin{pmatrix}
-1 & \I\\
\phantom{-}\I & 1
\end{pmatrix} + O\left(\frac{1}{n\log^3 n}\right) \Bigg)_{11}
\\&= -\frac{3}{16n^2\log^2 n} + O\left( \frac{1}{n^2\log^3 n}\right).
\end{align*}
Now substituting the asymptotic formula \eqref{asymanbn} for $\widehat a_n$ implies \eqref{FinalEq1}.

The proof of the asymptotic formula \eqref{FinalEq2} is more involved. First recall \cite[Section~8]{KMVV}, in which it is shown that for $z$ away from $\pm 1$, we have
\begin{align} \label{hatQinfty}
 \hQ^{(n)}(z) = \left(\Id + \frac{R_1(z)}{n} + Er(z,n)\right)N(z),
\end{align}
where $R_1(z)$, $Er(z,n)$ are matrix-valued functions, holomorphic for $z \in \Omega_0$ (cf.\ Figure~\ref{lensJumpContour}), satisfying
\begin{align}\label{R1}
 |R_1(z)| \leq \frac{c_1}{|z|}, \qquad |Er(z,n)| \leq \frac{c_2}{|z|n^2}, \qquad z \to \infty
\end{align}
for some $c_1, c_2 > 0$. Importantly, $R_1$ is $n$-independent. As a consequence of \eqref{hatQinfty} and \eqref{R1}, we obtain
\begin{align*}
 \big|\hQ^{(n+1)}(z) - \hQ^{(n)}(z)\big| \lesssim \frac{c_2}{|z|n^2}, \qquad z\to \infty,
\end{align*}
from which
\begin{align}\label{hQDiffFormula}
 \hQ^{(n)}_1 - \hQ^{(n+1)}_1 = O\left(\frac{1}{n^2}\right)
\end{align}
follows. Additionally, direct computation leads to $N_{12}(z) = -\frac{1}{2\I z} + O\big(z^{-2}\big)$ implying
\begin{align}\label{hatQ12}
 \big(\hQ^{(n)}_1\big)_{12} = -\frac{1}{2\I} + O\left(\frac{1}{n}\right).
\end{align}
Now using \eqref{QDiffFormula2}, we conclude from \eqref{hQDiffFormula} that
\begin{align}
 \bQ^{(n)}_1 - \bQ^{(n+1)}_1 &= \hQ^{(n)}_1 - \hQ^{(n+1)}_1 + O\left(\frac{1}{n^2\log^2 n}\right)= O\left(\frac{1}{n^2}\right).\label{Qnn+1}
\end{align}
Regarding formula \eqref{b_n^2-widehat b_n^2}, we now obtain using \eqref{hatQ12} and \eqref{Qnn+1}, together with Proposition~\ref{QQ}:
\begin{align}\nonumber
 b_n^2-\widehat b_n^2={}& \big(\big(\bQ_1^{(n+1)}\big)_{12}-\big(\hQ_1^{(n+1)}\big)_{12}\big)\big(\big(\bQ_1^{(n+1)}\big)_{21}-\big(\bQ_1^{(n+2)}\big)_{21}\big)
 \\\nonumber
 &+ \big( \hQ_1^{(n)}\big)_{12}\big[\big(\big(\bQ_1^{(n+1)}\big)_{21}-\big(\bQ_1^{(n+2)}\big)_{21}\big)-\big(\big(\hQ_1^{(n+1)}\big)_{21}-\big(\hQ_1^{(n+2)}\big)_{21}\big)\big]\nonumber
 \\
={}& \left(\frac{3\I}{16n\log^2 n} + O\left(\frac{1}{n\log^3 n}\right)\right)O\left(\frac{1}{n^2}\right)\nonumber
 \\
 &+\left(-\frac{1}{2\I}+O\left(\frac{1}{n}\right)\right) \left(\frac{3\I}{16n^2\log^2 n} + O\left(\frac{1}{n^2\log^3 n}\right)\right)\nonumber
 \\
={}& -\frac{3}{32n^2\log^2 n} + O\left(\frac{1}{n^2\log^3 n}\right).\label{bnlong}
\end{align}
Moreover, we have
\begin{align}\nonumber
 b_n^2-\widehat b_n^2 = \big(b_n-\widehat b_n\big)\big(b_n+\widehat b_n\big) &= \big(b_n-\widehat b_n\big)\big(2\widehat b_n + b_n-\widehat b_n\big)
 \\\label{bnshort}
 &=\big(b_n-\widehat b_n\big)\big(1 + b_n-\widehat b_n + O\big(n^{-2}\big)\big)
\end{align}
As $\widehat b_n = \frac{1}{2} + O\big(n^{-2}\big)$, see Corollary~\ref{AsymptoticsHatRC}, and $b_n > 0$, we have that $\big(1 + b_n-\widehat b_n + O\big(n^{-2}\big)\big)>1/2-\epsilon$, which with \eqref{bnlong} implies that
\begin{align}\label{bnveryshort}
 b_n-\widehat b_n = O\left(\frac{1}{n^2\log^2 n}\right).
\end{align}
Substituting \eqref{bnveryshort} once again into the term $\big(1 + b_n-\widehat b_n + O\big(n^{-2}\big)\big)$, we conclude from \eqref{bnshort} that in fact
\begin{align*}
 b_n^2-\widehat b_n^2 = \big(b_n-\widehat b_n\big)\big(1+O\big(n^{-2}\big)\big),
\end{align*}
which with \eqref{bnlong} implies
\begin{align*}
 b_n-\widehat b_n = -\frac{3}{32n^2\log^2 n} + O\left(\frac{1}{n^2\log^3 n}\right).
\end{align*}
That together with the asymptotic formula \eqref{asymanbn} implies \eqref{FinalEq2} finishing the proof of Theorem~\ref{MainTheorem}.

\appendix
\section{Proofs of certain propositions}\label{appendixA}

The following appendix contains proofs of Propositions~\ref{Proprr} and~\ref{IntegralEst2}. These differ in certain details from the analogous proofs in \cite{CD}, hence are included here.
\begin{Proposition} \label{ProprrAp}
Fix $R > 0$. Let $r_n, \tilde r_n \in (0,1)$, $n \in \N$ be two sequences satisfying $r_n, \tilde r_n \to 0$, such that $n\big|\frac{r_n}{\tilde r_n}-1|< R$. Then
\begin{align*}
 &\frac{F^2}{w_+}(1+r_n)-\frac{F^2}{w_+}(1+\tilde r_n)+\frac{F^2}{w_-}(1+r_n)-\frac{F^2}{w_-}(1+\tilde r_n)
 \\
 &\qquad= O(r_n \log |\log r_n|) + O\left(\frac{1}{n\log^3 r_n}\right) + O\left(\frac{1}{n^2}\right),
\end{align*}
where the implied constants in the $O$-terms depend only on $R$.
\end{Proposition}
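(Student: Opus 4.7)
The starting point is Corollary \ref{CorF+1}, which immediately yields
\[
\frac{F^2}{w_+}(1+r_n)+\frac{F^2}{w_-}(1+r_n)-2 = -\frac{3\pi^2}{\log^2(2/r_n)} + \mathcal E(r_n),
\]
with $\mathcal E(r_n) = O\bigl(\log^{-3} r_n\bigr)$, and similarly for $\tilde r_n$. The quantity to estimate is then
\[
-\frac{3\pi^2}{\log^2(2/r_n)}+\frac{3\pi^2}{\log^2(2/\tilde r_n)}+\bigl[\mathcal E(r_n)-\mathcal E(\tilde r_n)\bigr].
\]
For the first pair, an elementary calculation using the hypothesis $n|r_n/\tilde r_n-1|<R$ (which gives $|\log r_n-\log \tilde r_n|=O(n^{-1})$) shows that
\[
\frac{3\pi^2}{\log^2(2/\tilde r_n)}-\frac{3\pi^2}{\log^2(2/r_n)}=O\!\left(\frac{1}{n\log^3 r_n}\right),
\]
contributing the middle error term in the conclusion.

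The crux is therefore estimating $\mathcal E(r_n)-\mathcal E(\tilde r_n)$ at a level finer than what Corollary \ref{CorF+1} alone provides. For this I would return to the contour integral representation derived in the proof of Proposition \ref{F^2/w},
\[
\frac{F^2}{w}(z)=\phi(z)^{-1}\exp\!\left(-\frac{(z^2-1)^{1/2}}{2\pi\I}\int_{\gamma}\frac{\log(w(\zeta)/(1+\zeta))}{(\zeta^2-1)^{1/2}}\frac{{\rm d}\zeta}{\zeta-z}\right),
\]
and split the $\gamma$-integral into a contribution from a small arc around $\zeta=+1$ (where the iterated-logarithm singularity of the integrand lives and which, by residue analysis, is what produces the leading $\mp \I\pi/w\,-\,\pi^2/(2w^2)$ terms of \eqref{F^2/w at +1}) and a contribution from the remainder of $\gamma$. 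The latter defines an analytic function of $z$ in a neighbourhood of $z=+1$; its value at $z=1$ is $0$ thanks to the prefactor $(z^2-1)^{1/2}$, and a direct estimate of its derivative, in which the only non-trivial blow-up comes from $\int\frac{{\rm d}\zeta}{(\zeta-1)\log(\zeta-1)}=O(\log|\log r_n|)$, yields a bound $O(r_n\log|\log r_n|)$ for each of $z=1+r_n$ and $z=1+\tilde r_n$ separately, accounting for the first error term in the proposition.

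Finally, the singular-arc contribution is re-expanded to one order beyond what \eqref{F^2/w at +1} provides: writing it as $1\mp \I\pi/w(z)-\pi^2/(2w(z)^2)\pm \I\pi^3/(6w(z)^3)+O(\log^{-4}(z-1))$, the $\pm$ terms cancel under the sum over upper/lower boundary values, the $1/w^3$ cancellation (together with the $1/\log^4$ remainder) yields a contribution of size $O(1/(n\log^3 r_n))$ after differencing via the same $|\log r_n-\log \tilde r_n|=O(n^{-1})$ estimate, and residual tail contributions from the integral representation that depend on $n$ only through $\phi(z)$-type factors produce the $O(n^{-2})$ error. The main obstacle is the middle step: one must check that the ``smooth-in-$z$'' piece of the integral representation really is analytic at $z=1$ (i.e.\ the branch from the exterior logarithmic singularity does not leak in) and that its derivative has the claimed $\log|\log r_n|$ bound, which requires estimating $\int_\gamma \log(w(\zeta)/(1+\zeta))\,(\zeta^2-1)^{-1/2}(\zeta-z)^{-2}\,{\rm d}\zeta$ uniformly as $z\to 1$ with care around the endpoint $\zeta=1$ of $\gamma$.
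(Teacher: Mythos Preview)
Your plan has a genuine gap at its core. The contour representation you invoke,
\[
\frac{F^2}{w}(z)=\phi(z)^{-1}\exp\!\left(-\frac{(z^2-1)^{1/2}}{2\pi\I}\int_{\gamma}\frac{\log(w(\zeta)/(1+\zeta))}{(\zeta^2-1)^{1/2}}\frac{{\rm d}\zeta}{\zeta-z}\right),
\]
was derived in the proof of Proposition~\ref{F^2/w} under the assumption that $z$ lies \emph{inside} $\gamma$; that is precisely what makes the residue step \eqref{ResiCalc} valid. The contour $\gamma$ begins and ends at $\zeta=1$ and encircles $[-1,1]$, so for $z=1+r_n$ with $r_n>0$ the point $z$ sits outside $\gamma$ and the formula is simply not available. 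You cannot enlarge $\gamma$ past $\zeta=1$ because the integrand has its iterated-logarithmic singularity there. Your entire ``split into a small arc near $\zeta=1$ plus a regular remainder, then estimate the derivative of the regular piece'' programme rests on this unavailable representation, as does your account of where the $O(r_n\log|\log r_n|)$ and $O(n^{-2})$ terms come from.

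The paper's route is quite different: it works directly from the real-line Szeg\H{o} integral \eqref{SzegoF} for $\log F(1+r_n)$, splits $\int_{-1}^1=\int_{-1}^0+\int_0^1$, and after several changes of variables arrives at $\log\bigl(F^2(1+r_n)/F^2(1+\tilde r_n)\bigr)=\frac{a}{n\log(2/\tilde r_n)}+\Theta(r_n,n)$ with $a=n(r_n/\tilde r_n-1)\in[-R,R]$ and $\Theta$ collecting exactly the three error terms in the statement. Separately one has $1-w_\pm(1+r_n)/w_\pm(1+\tilde r_n)=-\frac{a}{n(\log(2/\tilde r_n)\pm\pi\I)}+O(n^{-2})$, and the key cancellation is
\[
\frac{a}{n\log(2/\tilde r_n)}-\frac{a}{n(\log(2/\tilde r_n)\pm\pi\I)}=\pm\frac{\pi\I a}{n\log^2(2/\tilde r_n)}+O\!\left(\frac{1}{n\log^3 r_n}\right),
\]
so that each of $\frac{F^2}{w_\pm}(1+r_n)-\frac{F^2}{w_\pm}(1+\tilde r_n)$ carries a leading $\pm\frac{\pi\I a}{n\log^2(2/\tilde r_n)}$ which disappears upon summation. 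Your differencing of the explicit $3\pi^2/\log^2$ term from Corollary~\ref{CorF+1} is correct, but to control $\mathcal E(r_n)-\mathcal E(\tilde r_n)$ you need access to this finer $\pm$-structure, and the paper gets it from the Szeg\H{o} integral on $[-1,1]$ rather than from the $\gamma$-contour formula.
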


\begin{proof}
Let us assume without loss of generality $r_n \geq \tilde r_n$. It follows from definition \eqref{SzegoF} that
\begin{align*}
 \log F(1+r_n) &= \big((1+r_n)^2-1\big)^{1/2} \frac{1}{2\pi \I}\int_{-1}^1 \frac{\log w(s)}{\big(s^2-1\big)_+^{1/2}} \frac{{\rm d}s}{s-(1+r_n)}
 \\
 &=\big(\sqrt{2} r_n^{1/2}+O\big(r_n^{3/2}\big)\big) \frac{1}{2\pi \I}\int_{-1}^1 \frac{\log w(s)}{\big(s^2-1\big)_+^{1/2}} \frac{{\rm d}s}{s-(1+r_n)}.
\end{align*}
Moreover, by \cite[equation~A.11]{CD}, we have
\begin{align*}
 \int_{-1}^1 \frac{\log w(s)}{\big(s^2-1\big)_+^{1/2}} \frac{{\rm d}s}{s-(1+r_n)} = O\left(\frac{\log |\log r_n|}{r_n^{1/2}}\right),
\end{align*}
implying that, in fact,
\begin{align*}
 \log F(1+r_n) = \sqrt{2} r_n^{1/2} \frac{1}{2\pi \I}\int_{-1}^1 \frac{\log w(s)}{\big(s^2-1\big)_+^{1/2}} \frac{{\rm d}s}{s-(1+r_n)}+O(r_n \log |\log r_n|).
\end{align*}
In particular, we obtain
\begin{align} \nonumber
 \log \frac{F(1+r_n)}{F(1+\tilde r_n)}={}&
 \sqrt{2} r_n^{1/2} \frac{1}{2\pi \I}\int_{0}^1 \frac{\log w(s)}{\big(s^2-1\big)_+^{1/2}} \frac{{\rm d}s}{s-(1+r_n)}\\
 &-\sqrt{2} \tilde r_n^{1/2} \frac{1}{2\pi \I}\int_{0}^1 \frac{\log w(s)}{\big(s^2-1\big)_+^{1/2}} \frac{{\rm d}s}{s-(1+\tilde r_n)} \nonumber
 \\
 &+\frac{1}{2\pi \I}\int_{-1}^0 \frac{\log w(s)}{\big(s^2-1\big)_+^{1/2}} \Bigg[\frac{\sqrt{2} r_n^{1/2}}{s-(1+r_n)}-\frac{\sqrt{2} \tilde r_n^{1/2}}{s-(1+\tilde r_n)}\Bigg] {\rm d}s \nonumber \\
 &+O(r_n \log |\log r_n|).\label{FF1}
\end{align}
The term in the bracket can be estimated as follow:
\begin{align*}
 \Bigg|\frac{\sqrt{2} r_n^{1/2}}{s-(1+r_n)}-\frac{\sqrt{2} \tilde r_n^{1/2}}{s-(1+\tilde r_n)}\Bigg| &= \Bigg|\sqrt{2} \frac{r_n^{1/2}(s-(1+\tilde r_n))-\tilde r_n^{1/2}(s-(1+ r_n))}{(s-(1+r_n))(s-(1+\tilde r_n))}\Bigg|
 \\
 &= \Bigg|\sqrt{2} \frac{\big(s-1+r_n^{1/2}\tilde r_n^{1/2}\big)\big(r_n^{1/2}-\tilde r_n^{1/2}\big)}{(s-(1+r_n))(s-(1+\tilde r_n))}\Bigg|
 \\
 &= \underbrace{\Bigg|\sqrt{2} \frac{\big(s-1+r_n^{1/2}\tilde r_n^{1/2}\big) \tilde r_n^{1/2}}{(s-(1+r_n))(s-(1+\tilde r_n))}\Bigg|}_{= O\big(r_n^{1/2}\big)} \cdot \underbrace{\left|\left(\frac{r_n}{\tilde r_n}\right)^{1/2} -1 \right|}_{< \frac{R}{n}}
 \\
 &= O\left(\frac{r_n^{1/2}}{n}\right),
\end{align*}
where we used that $s\in (-1,0)$. Thus \eqref{FF1} can be rewritten as
\begin{align*}
 \log \frac{F(1+r_n)}{F(1+\tilde r_n)}={}&
 \sqrt{2} r_n^{1/2} \frac{1}{2\pi \I}\int_{0}^1 \frac{\log w(s)}{\big(s^2-1\big)_+^{1/2}} \frac{{\rm d}s}{s-(1+r_n)}\\
 &-\sqrt{2} \tilde r_n^{1/2} \frac{1}{2\pi \I}\int_{0}^1 \frac{\log w(s)}{\big(s^2-1\big)_+^{1/2}} \frac{{\rm d}s}{s-(1+\tilde r_n)}
\\&+O(r_n \log |\log r_n|) + O\left(\frac{r_n^{1/2}}{n}\right),
\end{align*}
Now, after the change of variables $t = 1- s$ and some algebraic manipulation, we get (cf.\ proof of \cite[Proposition~A.4]{CD})
\begin{align}\nonumber
 \log \frac{F(1+r_n)}{F(1+\tilde r_n)}={}& \frac{r_n^{1/2}}{2\pi} \int_0^1 \frac{\log w(1-t)}{\sqrt{t}} \frac{{\rm d}t}{t+r_n} - \frac{\tilde r_n^{1/2}}{2\pi} \int_0^1 \frac{\log w(1-t)}{\sqrt{t}} \frac{{\rm d}t}{t+\tilde r_n} \\\label{logFF}
 &+ \frac{1}{2\pi} H(r_n, \tilde r_n) +O(r_n \log |\log r_n|) + O\left(\frac{r_n^{1/2}}{n}\right),
\end{align}
where
\begin{align*}
 H(r_n, \tilde r_n)={}& \big(r_n^{1/2}-\tilde r_n^{1/2}\big)\int_0^1 \log w(1-t) \left[\frac{\sqrt{t}}{\sqrt{2}\sqrt{2-t}+(2-t)} \right]\frac{{\rm d}t}{t+r_n}
 \\
 &+ \tilde r_n^{1/2} \int_0^1 \log w(1-t) \left[\frac{\sqrt{t}}{\sqrt{2}\sqrt{2-t}+(2-t)} \right]\left(\frac{1}{t+r_n}-\frac{1}{t+\tilde r_n}\right){\rm d}t.
\end{align*}
Thus, we can estimate
\begin{align*}
 |H(r_n, \tilde r_n)|\leq{}& \big|r_n^{1/2}-\tilde r_n^{1/2}\big| \int_0^1 |\log w(1-t)|\left[\frac{\sqrt{t}}{\sqrt{2}\sqrt{2-t}+(2-t)} \right]\frac{{\rm d}t}{t}
 \\
 &+\tilde r_n^{1/2} \int_0^1 |\log w(1-t)|\left[\frac{\sqrt{t}}{\sqrt{2}\sqrt{2-t}+(2-t)} \right]\frac{|\tilde r_n - r_n|{\rm d}t}{|t+r_n|\cdot |t+\tilde r_n|}
 \\
\leq{}& c \tilde r_n^{1/2} \left|\frac{r_n^{1/2}}{\tilde r_n^{1/2}}-1\right| + \tilde r_n^{1/2}\left|\frac{\tilde r_n - r_n}{\tilde r_n}\right| \int_0^1 |\log w(1-t)|\left[\frac{\sqrt{t}}{\sqrt{2}\sqrt{2-t}+(2-t)} \right]\frac{{\rm d}t}{t}
 \\
\leq{}& c \tilde r_n^{1/2} \underbrace{\left|\frac{r_n^{1/2}}{\tilde r_n^{1/2}}-1\right|}_{<\frac{R}{n}} + c \tilde r_n^{1/2} \underbrace{\left|\frac{r_n}{\tilde r_n}-1\right|}_{<\frac{R}{n}} = O\left(\frac{r_n^{1/2}}{n}\right).
\end{align*}
So, we see that $H(r_n, \tilde r_n)$ can be included in the error term $O\big(\frac{r_n^{1/2}}{n}\big)$.

For the remaining integrals in \eqref{logFF}, we obtain after performing the change of variables $t \to r_n t$ and $t \to \tilde r_n t$ in the first and second integral, respectively,
\begin{align}\nonumber
 &r_n^{1/2}\int_0^1 \frac{\log w(1-t)}{\sqrt{t}} \frac{{\rm d}t}{t+r_n} - \tilde r_n^{1/2} \int_0^1 \frac{\log w(1-t)}{\sqrt{t}} \frac{{\rm d}t}{t+\tilde r_n}
 \\
 &\qquad= \int_0^{1/r_n} \frac{\log w(1-r_n t)}{\sqrt{t}} \frac{{\rm d}t}{t+1} - \int_0^{1/\tilde r_n} \frac{\log w(1-\tilde r_n t)}{\sqrt{t}} \frac{{\rm d}t}{t+1}\nonumber
 \\
&\qquad = \int_0^{1/r_n}\frac{\big[\log w(1-r_n t) - \log w(1-\tilde r_n t)\big]}{\sqrt{t}} \frac{{\rm d}t}{t+1}-\int_{1/ r_n}^{1/\tilde r_n} \frac{\log w(1-\tilde r_n t)}{\sqrt{t}} \frac{{\rm d}t}{t+1}.\label{t_to_rt}
\end{align}
Using the fact that $|\log w(1-\tilde r_n t)|$ is uniformly bounded for $t \in \big[\frac{1}{r_n}, \frac{1}{\tilde r_n}\big]$, the last integral in~\eqref{t_to_rt} can be estimated via
\begin{align*}
 \left| \int_{1/r_n}^{1/\tilde r_n} \frac{\log w(1-\tilde r_n t)}{\sqrt{t}} \frac{{\rm d}t}{t+1} \right| &\leq \left \Vert \frac{1}{\sqrt{t}} \frac{1}{t+1} \right \Vert_{L^\infty(1/r_n, 1/\tilde r_n)} \int_{1/ r_n}^{1/\tilde r_n} |\log w(1-\tilde r_n t)| {\rm d}t \\
 &\leq c r_n^{3/2} \left| \frac{1}{\tilde r_n} - \frac{1}{r_n}\right| = c r_n^{1/2} \left| \frac{r_n}{\tilde r_n}-1\right| = O\left(\frac{r_n^{1/2}}{n}\right),
\end{align*}
and hence can be again included in the $O\big(\frac{r_n^{1/2}}{ n}\big)$-term in \eqref{logFF}.

Let us now consider the remaining integral in the last line of \eqref{t_to_rt}:
\begin{align}\nonumber
& \int_0^{1/r_n} \big[\log w(1-r_n t)- \log w(1-\tilde r_n t)\big] \frac{{\rm d}t}{t^{3/2}+t^{1/2}}
 \\\label{logDiff}
 &\qquad= \int_0^{1/r_n} \log \left(\frac{w(1-r_n t)}{w(1-\tilde r_n t)}\right) \frac{{\rm d}t}{t^{3/2}+t^{1/2}}
\end{align}
Now define $a = a(r_n, \tilde r_n ; n) := n\big(\frac{r_n}{\tilde r_n}-1\big) \in [-R, R]$. Note that for $t \in \big(0, \frac{1}{r_n}\big)$, we have $\log 2 \leq \log \frac{2}{\tilde r_n t}$, hence
\begin{align*}
 \frac{w(1-r_n t)}{w(1-\tilde r_n t)} &= \frac{\log \frac{2}{r_nt}}{\log \frac{2}{\tilde r_nt}} = 1 + \frac{\log \frac{2}{r_nt}-\log \frac{2}{\tilde r_nt}}{\log \frac{2}{\tilde r_nt}} =1 + \frac{\log \frac{\tilde r_n}{r_n}}{\log \frac{2}{\tilde r_n t}}
 \\
 &= 1 + \frac{\log (1+\frac{a}{n})}{\log \frac{2}{\tilde r_n t}}= 1 + \frac{a}{n\log \frac{2}{\tilde r_n t}} + O\left(\frac{1}{n^2}\right).
\end{align*}
Thus, we can estimate the integrand of \eqref{logDiff} by
\begin{align*}
 \log \frac{w(1-r_n t)}{w(1-\tilde r_n t)} = \frac{a}{n\log \frac{2}{\tilde r_n t}} + O\left(\frac{1}{n^2}\right),
\end{align*}
where the $O\big(\frac{1}{n^2}\big)$-term is uniform for $t\in\big[0, \frac{1}{r_n}\big]$. Substituting this into \eqref{logDiff}, we obtain
\begin{align}\nonumber
 &\int_0^{1/r_n} \left(\frac{a}{n\log \frac{2}{\tilde r_n t}} + O\left(\frac{1}{n^2}\right)\right) \frac{{\rm d}t}{t^{3/2}+t^{1/2}}
 \\\label{PartitionThreeIntegrals}
 &\qquad= \left(\int_{0}^{r_n^{1/2}}+\int_{r_n^{1/2}}^{1/r_n^{1/2}}+\int_{1/r_n^{1/2}}^{1/r_n}\right) \frac{a}{n\log \frac{2}{\tilde r_n t}} \frac{{\rm d}t}{t^{3/2}+t^{1/2}}+O\left(\frac{1}{n^2}\right).
\end{align}
Note that here we use the assumption $r_n < 1$.
Two of the integrals in \eqref{PartitionThreeIntegrals} can be estimated by
\begin{align*}
 \left|\int_{0}^{r_n^{1/2}} \frac{a}{n\log \frac{2}{\tilde r_n t}} \frac{{\rm d}t}{t^{3/2}+t^{1/2}} \right| \leq \left|\int_{0}^{r_n^{1/2}} \frac{c_1}{n \log r_n} \frac{{\rm d}t}{t^{1/2}}\right| = \frac{2c_1 r_n^{1/4}}{n |\log r_n|} = O\left(\frac{r_n^{1/4}}{n \log r_n}\right)
\end{align*}
and
\begin{align*}
 \left|\int_{1/r_n^{1/2}}^{1/r_n} \frac{a}{n\log \frac{2}{\tilde r_n t}} \frac{{\rm d}t}{t^{3/2}+t^{1/2}} \right| \leq \left|\int_{1/r_n^{1/2}}^{1/r_n} \frac{c_2}{n} \frac{{\rm d}t}{t^{3/2}}\right| \leq \frac{c_2 r_n^{1/4}}{n} = O\left(\frac{r_n^{1/4}}{n}\right).
\end{align*}
For the remaining integral, we have
\begin{align}\nonumber
 \int_{r_n^{1/2}}^{1/r_n^{1/2}} \frac{a}{n\log \frac{2}{\tilde r_n t}} \frac{{\rm d}t}{t^{3/2}+t^{1/2}} &=
 \int_{r_n^{1/2}}^{1/r_n^{1/2}} \frac{a}{n\log \frac{2}{\tilde r_n }-n \log t} \frac{{\rm d}t}{t^{3/2}+t^{1/2}}
 \\\label{iteratedFractions}
 &= \int_{r_n^{1/2}}^{1/r_n^{1/2}} \frac{a}{n\log \frac{2}{\tilde r_n }} \Bigg(\frac{1}{1-\frac{\log t}{\log \frac{2}{\tilde r_n}}} \Bigg) \frac{{\rm d}t}{t^{3/2}+t^{1/2}}.
\end{align}
Note that because $t \in \big[r_n^{1/2}, r_n^{-1/2}\big]$ we have $|\log t| \leq \frac{1}{2} \log{\frac{1}{r_n}} < \frac{1}{2} \log{\frac{2}{\tilde r_n}}$ for $n$ sufficiently large depending on $R$, so the last integral in \eqref{iteratedFractions} can be estimated by
\begin{align*}
 \frac{a}{n\log \frac{2}{\tilde r_n }} \int_{r_n^{1/2}}^{1/r_n^{1/2}} \left(1+\frac{\log t}{\log \frac{2}{\tilde r_n}} + O \left( \frac{\log^2 t}{\log^2 \frac{2}{\tilde r_n}} \right)\right) \frac{{\rm d}t}{t^{3/2}+t^{1/2}}.
\end{align*}
Making the change of variables $\gamma = t^{1/2}$, this can be rewritten as
\begin{align*}
 &\frac{2a}{n\log \frac{2}{\tilde r_n }} \int_{r_n^{1/4}}^{1/r_n^{1/4}} \left(1+\frac{2\log \gamma}{\log \frac{2}{\tilde r_n}} + O \left( \frac{\log^2 \gamma}{\log^2 \frac{2}{\tilde r_n}} \right)\right) \frac{{\rm d}\gamma}{\gamma^{2}+1}
 \\
 &\qquad= \frac{2a}{n\log \frac{2}{\tilde r_n }} \left(\int_{r_n^{1/4}}^{1/r_n^{1/4}} \frac{{\rm d}\gamma}{\gamma^{2}+1} + \int_{r_n^{1/4}}^{1/r_n^{1/4}} \frac{2\log \gamma}{\log \frac{2}{\tilde r_n}} \frac{{\rm d}\gamma}{\gamma^{2}+1} \right)+ O\left(\frac{1}{n \log^3 r_n}\right)
 \\
 &\qquad= \frac{2a}{n\log \frac{2}{\tilde r_n }} \left(\int_{0}^{\infty} \frac{{\rm d}\gamma}{\gamma^{2}+1} + \int_{0}^{\infty} \frac{2\log \gamma}{\log \frac{2}{\tilde r_n}} \frac{{\rm d}\gamma}{\gamma^{2}+1}\right) + O\left(\frac{1}{n \log^3 r_n}\right).
\end{align*}
Note that
\begin{align*}
 \int_{0}^{\infty} \frac{{\rm d}\gamma}{\gamma^{2}+1} = \arctan \gamma \Big|^\infty_0 = \frac{\pi}{2},
\end{align*}
while the substitution $\eta = \gamma^{-1}$ yields
\begin{align*}
 \int_{0}^{\infty} \frac{\log(\gamma){\rm d}\gamma}{\gamma^{2}+1} = - \int_{0}^{\infty} \frac{\log(\eta){\rm d}\eta}{\eta^{2}+1}
\end{align*}
implying $\int_{0}^{\infty} \frac{\log(\gamma){\rm d}\gamma}{\gamma^{2}+1} = 0$.

Summarizing, we have shown that
\begin{align*}
& \int_0^{1/r_n} \big[\log w(1-r_n t)- \log w(1-\tilde r_n t)\big] \frac{{\rm d}t}{t^{3/2}+t^{1/2}}
 \\
 &\qquad= \frac{a\pi}{n\log \big(\frac{2}{\tilde r_n }\big)} + O\left(\frac{1}{n^2}\right) + O\left(\frac{1}{n \log^3 r_n} \right).
\end{align*}
We can substitute this estimate in \eqref{t_to_rt} and then \eqref{logFF} to obtain
\begin{align}\label{ratioFsquared}
 \log\frac{F^2(1+r_n)}{F^2(1+\tilde r_n)} = 2 \log \frac{F(1+r_n)}{F(1+\tilde r_n)} = \frac{a}{ n\log \frac{2}{\tilde r_n} } + \Theta(r_n, n),
\end{align}
where $\Theta(r_n, n)$ is short for $O(r_n \log |\log r_n|) + O\big(\frac{1}{n\log^3 r_n}\big) +O\big(\frac{1}{n^2}\big)$. Exponentiating the expression~\eqref{ratioFsquared} leads to
\begin{align*}
 \frac{F^2(1+r_n)}{F^2(1+\tilde r_n)} = 1 + \frac{a}{ n\log \frac{2}{\tilde r_n} } + \Theta(r_n, n).
\end{align*}
Moreover,
\begin{align}
& \frac{F^2}{w_\pm}(1+r_n) - \frac{F^2}{w_\pm}(1+\tilde r_n)
\nonumber \\
 &\qquad = \frac{F^2(1+r_n)-F^2(1+\tilde r_n)}{w_\pm(1+r_n)} + F^2(1+\tilde r_n)\left(\frac{1}{w_\pm(1+r_n)}-\frac{1}{w_\pm(1+\tilde r_n)}\right)
\nonumber \\
 &\qquad = \frac{F^2(1+\tilde r_n)}{w_\pm(1+r_n)}\left(\frac{F^2(1+r_n)}{F^2(1+\tilde r_n)}-1\right) + \frac{F^2(1+\tilde r_n)}{w_\pm(1+r_n)}\left(1-\frac{w_\pm(1+r_n)}{w_\pm(1+\tilde r_n)}\right)
\nonumber \\
 &\qquad = \frac{F^2(1+\tilde r_n)}{w_\pm(1+r_n)}\left(\frac{a}{n\log \frac{2}{\tilde r_n}} + \Theta(r_n, n)+ 1-\frac{w_\pm(1+r_n)}{w_\pm(1+\tilde r_n)}\right).\label{FTheta}
\end{align}
For the ratio of the weight functions, we have (here $\pm$ refers to the limit from $\C_\pm$):
\begin{align*}
 \frac{w_\pm(1+r_n)}{w_\pm(1+\tilde r_n)} &= \frac{\log \frac{2}{r_n}\pm \pi \I}{\log \frac{2}{\tilde r_n}\pm \pi \I} = 1 + \frac{\log \frac{2}{r_n}-\log\frac{2}{\tilde r_n}}{\log \frac{2}{\tilde r_n}\pm \pi \I}
 \\
 &= 1+ \frac{\log \frac{\tilde r_n}{r_n}}{\log \frac{2}{\tilde r_n}\pm \pi \I} = 1 + \frac{a}{n\big(\log \frac{2}{\tilde r_n}\pm \pi \I\big)}+ O\left(\frac{1}{n^2}\right),
\end{align*}
so
\begin{align}\nonumber
 \frac{a}{ n\log \frac{2}{\tilde r_n}} + 1-\frac{w_\pm(1+r_n)}{w_\pm(1+\tilde r_n)} &= \frac{a}{ n\log \frac{2}{\tilde r_n}} - \frac{a}{n\big(\log \frac{2}{\tilde r_n}\pm \pi \I\big)} + O\left(\frac{1}{n^2}\right)
 \\\nonumber
 &= \pm \frac{\pi\I a }{n \log \frac{2}{\tilde r_n}\big(\log \frac{2}{\tilde r_n}\pm \pi \I\big)} + O\left(\frac{1}{n^2}\right)
 \\\label{wratio}
 &= \pm\frac{\pi\I a}{n \log^2\frac{2}{\tilde r_n}} + O\left(\frac{1}{n \log^3 r_n}\right)+ O\left(\frac{1}{n^2}\right).
\end{align}
Note that both error terms in \eqref{wratio} already appear in $\Theta(r_n, n)$. Substituting \eqref{wratio} into \eqref{FTheta} and using \eqref{F^2/w at +1} leads to
\begin{align*}
 \frac{F^2}{w_\pm}(1+r_n) - \frac{F^2}{w_\pm}(1+\tilde r_n) &= \frac{F^2(1+\tilde r_n)}{w_\pm(1+r_n)}\left( \pm \frac{\pi\I a}{n \log^2\frac{2}{\tilde r_n}} + \Theta(r_n, n)\right)
 \\
 &= \left(1+ O\left(\frac{1}{\log r_n}\right)\right)\left(\pm\frac{\pi\I a}{n \log^2\frac{2}{\tilde r_n}} + \Theta(r_n, n)\right)
 \\
 &= \pm\frac{\pi\I a}{n \log^2\frac{2}{\tilde r_n}} + \Theta(r_n, n).
\end{align*}
In particular,
\begin{align*}
 &\frac{F^2}{w_+}(1+r_n)-\frac{F^2}{w_+}(1+\tilde r_n)+\frac{F^2}{w_-}(1+r_n)-\frac{F^2}{w_-}(1+\tilde r_n)
 \\
 &\qquad= \Theta(r_n ,n)= O(r_n \log |\log r_n|) + O\left(\frac{1}{n\log^3 r_n}\right) + O\left(\frac{1}{n^2}\right),
\end{align*}
finishing the proof.
\end{proof}

\begin{Proposition}\label{IntegralEst2Ap}
The following estimates hold:
\begin{align}\label{Int}
\frac{1}{2\pi\I} \int_{\Sigr} \hmu_n(\bv_n-\hv_n)\hmu_n^{-1} {\rm d}s = \frac{3}{16n \log^2n} \begin{pmatrix}
\phantom{-}1 & -\I\\
-\I & -1
\end{pmatrix} + O\left(\frac{1}{n\log^3 n}\right)
\end{align}
and
\begin{align}\nonumber
 & \frac{1}{2\pi\I} \int_{\Sigr} \hmu_n(\bv_n-\hv_n)\hmu_n^{-1} {\rm d}s - \frac{1}{2\pi\I} \int_{\Sigr} \hmu_{n+1}(\bv_{n+1}-\hv_{n+1})\hmu^{-1}_{n+1} {\rm d}s
 \\\label{differenceInt}
 &\qquad= \frac{3}{16n^2 \log^2n} \begin{pmatrix}
\phantom{-}1 & -\I\\
-\I & -1
\end{pmatrix} + O\left(\frac{1}{n^2\log^3 n}\right).
\end{align}
\end{Proposition}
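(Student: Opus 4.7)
The plan is to adapt the argument of Propositions C.3 and C.4 in \cite{CD}, which proved the analogous estimates with the Legendre resolvent $\tmu_n$ in place of the model resolvent $\hmu_n$. The adaptation is essentially mechanical because both $\tmu_n$ and $\hmu_n$ admit the same outer-parametrix approximation $N$ on any compact subset of $\Sigr$ bounded away from $\pm 1$ (with $O(n^{-1})$ errors), and both weights $\tw \equiv 1$ and $\hw$ are analytic and positive at $+1$, so the local parametrix at $+1$ -- the only local parametrix that actually enters this estimate -- has the same structure in both cases, differing only by a smooth multiplicative factor arising from $\hF$.

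For the localization step, note from \eqref{DiffJumpMatrices} that $\bv_n - \hv_n$ has its only non-zero entry in position $21$ and carries a factor $\phi^{-2n}$; by \eqref{phiGreater1}--\eqref{PhionInterval}, this factor is exponentially damped outside any fixed neighborhood $U_{+1}$ of $+1$. Together with the uniform boundedness of $\hmu_n^{\pm 1}$ on $\Sigr$ away from $\pm 1$, this reduces both integrals to integrals over $\Sigr \cap U_{+1}$ up to errors of order $\E^{-cn}$.

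To extract the leading term I would split $\Sigr \cap U_{+1}$ into the segment $(1,1+\delta)$ and the lens arcs $(\Sigma_1 \cup \Sigma_2) \cap U_{+1}$. On the segment, Corollary \ref{CorF+1} combined with \eqref{hat F^2/w at +1} gives
\begin{align*}
\Fw_+(s) + \Fw_-(s) - 2\hFw(s) = -\frac{3\pi^2}{\log^2\frac{2}{s-1}} + O\bigl(|s-1|^{1/2}\bigr) + O\bigl(\log^{-3}(s-1)\bigr),
\end{align*}
while on the lens arcs the same $21$-entry has size $O(1/\log|s-1|)$ and contributes only at order $O(1/(n\log^3 n))$ after the standard lens-deformation. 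Replacing $\hmu_n$ by its outer-parametrix approximation $N$ (with the local parametrix at $+1$ providing only $O(n^{-1})$ corrections), substituting $r = s - 1$, and using $\phi^{-2n}(1+r) \sim \E^{-2n\sqrt{2r}}$ from \eqref{phiEndpoints}, a Laplace evaluation concentrated at $r \sim n^{-2}$ produces the coefficient $\tfrac{3}{16n\log^2 n}$ of \eqref{Int}; the matrix factor $\bigl(\begin{smallmatrix}1&-\I\\-\I&-1\end{smallmatrix}\bigr)$ is the outcome of conjugating $\bigl(\begin{smallmatrix}0&0\\1&0\end{smallmatrix}\bigr)$ by the boundary values of $N$ just above $(1,1+\delta)$.

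For the differenced estimate \eqref{differenceInt}, after the Laplace substitution I would renormalize the integration variable so that the saddle points at indices $n$ and $n+1$ coincide, reducing the difference of integrands to differences of the form $\Fw_\pm(1+r_n) - \Fw_\pm(1+\tilde r_n)$ with $r_n, \tilde r_n \sim n^{-2}$ and $n|r_n/\tilde r_n - 1|$ bounded; these are exactly the hypotheses of Proposition \ref{Proprr}, whose output produces the additional factor of $1/n$ needed to upgrade \eqref{Int} to \eqref{differenceInt}. The main obstacle is the bookkeeping of the three pieces of $\Sigr \cap U_{+1}$ at the precision required for Proposition \ref{Proprr} to apply uniformly in the Laplace variable; in particular, one must verify that the error terms from the lens arcs and from the local-parametrix approximation at $+1$ do not contaminate the leading $O(1/(n^2 \log^2 n))$ coefficient. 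The Remark following Proposition \ref{Proprr} explicitly flags the dependence of its error on the decay rate of $r_n$, $\tilde r_n$, which is the precise point at which the argument can go wrong if one is not careful.
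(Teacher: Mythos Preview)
Your overall strategy---localize near $+1$, identify the $21$-structure of $\bv_n-\hv_n$, invoke Corollary~\ref{CorF+1}, and use Proposition~\ref{Proprr} for the differenced estimate---matches the paper's. But there is a genuine gap in the step where you write ``Replacing $\hmu_n$ by its outer-parametrix approximation $N$ (with the local parametrix at $+1$ providing only $O(n^{-1})$ corrections)''. This is false on the interval that matters. The approximation $\hQ = N + O(n^{-1})$ holds only \emph{away} from $\pm 1$; inside any fixed neighbourhood $U_{+1}$ one has instead $\hQ = (\Id+O(n^{-1}))\hP_{+1}$, and $\hP_{+1}$ agrees with $N$ only on $\partial U_{+1}$, not in the interior. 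Since the integral localizes (exponentially, not merely to order $O(1/(n\log^3 n))$) onto the shrinking segment $(1,1+1/n)$, you are deep inside $U_{+1}$ at scale $|s-1|\sim n^{-2}$, precisely where $N$ and $\hP_{+1}$ differ by $O(1)$.

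Concretely, your proposed Laplace evaluation with prefactor $N$ and kernel $\phi^{-2n}\sim\E^{-2n\sqrt{2r}}$ produces the wrong numerical constant. The paper instead writes $\hmu_n$ on $(1,1+1/n)$ via the Bessel local parametrix, so that the exponential $\phi^{-2n}$ is absorbed into $\cK_1(n^2f(s))=\tfrac{\I}{\pi}K_0\bigl(2n\sqrt{f(s)}\bigr)$; the scalar integral that emerges is $\int_0^\infty u\,K_0^2(u)\,{\rm d}u=\tfrac12$, whereas replacing $K_0(u)$ by its large-$u$ asymptotic $\sqrt{\pi/(2u)}\,\E^{-u}$ (which is what using $N$ amounts to) gives $\int_0^\infty u\cdot\tfrac{\pi}{2u}\E^{-2u}\,{\rm d}u=\tfrac{\pi}{4}$, off by a factor $\pi/2$. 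The matrix factor $\bigl(\begin{smallmatrix}1&-\I\\-\I&-1\end{smallmatrix}\bigr)$ you identify is correct---it comes from $\cE(1)\bigl(\begin{smallmatrix}0&1\\0&0\end{smallmatrix}\bigr)\cE^{-1}(1)$, and $\cE$ is built from $N$---but the scalar coefficient $\tfrac{3}{16}$ genuinely requires the full Bessel integral. Your plan for \eqref{differenceInt} via Proposition~\ref{Proprr} after the change of variables $y=n^2f(s)$ is correct and is exactly what the paper does, but it too must be carried out against the Bessel parametrix, not $N$.
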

\begin{proof}
 Following \cite[Proposition~C.3]{CD}, one can show that the contributions to the integrals from the contour $\Sigr \setminus (1,1+1/n)$ are exponentially small for $n \to \infty$. Thus, we see that for $n$ large enough, up to an exponentially small error, the integral in \eqref{Int} takes on the form
\begin{align}\label{localizedIntegral}
 - \frac{1}{2\pi \I}\int_1^{1+1/n} \hmu_n(\bv_n-\hv_n)\hmu_n^{-1} {\rm d}s,
\end{align}
as $\Sigr \cap (1,1+1/n)= (1,1+1/n)$ is oriented right to left. Hence, we need to consider the behaviour of $\hmu_n$ near the point $+1$. In fact, as $\bv_n-\hv_n$ is nonzero only in the $21$-entry, see \eqref{jumpMatrixV} and \eqref{jumpMatrixhV}, we just need to consider the second column of $\hmu_n$, which we will denote by $\hmu_{2,n}$:
\begin{align*}
 \hmu_{2,n} = \begin{pmatrix}
 \hmu_{12,n}
 \\
 \hmu_{22,n}
 \end{pmatrix}.
\end{align*}
It follows from \cite[Section~6, 7]{KMVV} (cf.\ \cite[equation~B.12]{CD}), that for $s \in (1, 1+1/n)$, $\hmu_{2,n}(s)$ takes on the form
\begin{align}\label{explicithmu}
 \hmu_{2,n}(s) = \cR^{(n)}(s)
 \cE(s)(2\pi n)^{\sigma_3/2} \mathcal K\big(n^2 f(s)\big)\phi^n(s)\Big(\frac{\hF}{\cW}(s)\Big)^{-1}.
\end{align}
Here
\begin{align*}
 \mathcal K(\zeta) = \begin{pmatrix}
 \cK_1(\zeta)
 \\
 \cK_2(\zeta)
 \end{pmatrix} = \begin{pmatrix}
 \frac{\I}{\pi}K_0\big(2\zeta^{1/2}\big)
 \\
 -2\zeta^{1/2} K_0'\big(2\zeta^{1/2}\big)
 \end{pmatrix},
\end{align*}
where $K_0$ is a special solution to the modified Bessel differential equation, see \cite[Section~10.25]{DLMF}, characterized by the condition
\begin{align*}
 K_0(u) \sim \sqrt{\frac{\pi}{2u}}\E^{-u} \qquad \text{for} \quad u \to \infty \quad \text{with} \quad |\arg u | < \frac{3\pi}{2} - \varepsilon, \quad \varepsilon > 0,
\end{align*}
and $f(z) = \frac{\log^2 \phi(z)}{4}$ locally around $z = +1$. The matrix-valued function $\cR^{(n)}$ is holomorphic in a fixed neighbourhood $U_{+1}$, of $+1$, where it satisfies uniformly
\begin{align*}
\cR^{(n)}(z) = \Id + O\left(\frac{1}{n} \right), \qquad z \in U_{+1},\quad n \to \infty.
\end{align*}
For $\cE$, we have
\begin{align*}
 \cE(z) = N(z)\left(\frac{ \cW}{\hF}(z)\right)^{\sigma_3}\frac{1}{\sqrt{2}}
 \begin{pmatrix}
 \phantom{-} 1 & -\I
 \\
 -\I & \phantom{-} 1
 \end{pmatrix}f(z)^{\sigma_3/4}, \qquad z \in U_{+1},
\end{align*}
where $\cW = \sqrt{\hw}$ is holomorphic in $U_{+1}$ as $\hw$ is nonvanishing close to $+1$ and $N$ is taken from~\eqref{OuterParametrix}. One sees easily that $\cE$ is holomorphic in $U_{+1}$ and satisfies
\begin{align*}
 \cE(1) = \frac{1}{\sqrt{2}}
 \begin{pmatrix}
 \phantom{-} 1 & *
 \\
 -\I & *
 \end{pmatrix},
\end{align*}
cf.\ \cite[Proposition~C.2]{CD} and \eqref{hat F^2/w at +1}. Using the explicit form of $\hmu_{2,n}$ in \eqref{explicithmu} and abbreviating $\cK_j = \cK_j\big(n^2 f(s)\big)$, $j = 1,2$, the integral in \eqref{localizedIntegral} without the $\frac{1}{2\pi\I}$-prefactor can be written as
\begin{align}
 &-\int_1^{1+1/n} \hmu_n(\bv_n-\hv_n)\hmu_n^{-1} {\rm d}s\nonumber
 \\
 &\qquad= -\int_1^{1+1/n} \cR^{(n)}(s)\cE(s)(2\pi n)^{\sigma_3/2}
 \begin{pmatrix}
 * & \cK_1
 \\
 * & \cK_2
 \end{pmatrix}
 \phi^{-n\sigma_3}(s)\left(\frac{\hF}{\cW}(s)\right)^{\sigma_3}\nonumber
 \\
 &\phantom{\qquad=}{}\times \begin{pmatrix}
 0 & 0
 \\
 \big(\frac{F^2}{w_+}(s)+\frac{F^2}{w_-}(s)-2\hFw(s)\big) \phi^{-2n}(s) & 0
 \end{pmatrix}
 \left(\frac{\hF}{\cW}(s)\right)^{-\sigma_3}\phi^{n\sigma_3}(s)\nonumber
 \\
 &\phantom{\qquad=}{}\times \begin{pmatrix}
 \cK_2 & -\cK_1
 \\
 * & *
 \end{pmatrix}(2\pi n)^{-\sigma_3/2}\cE^{-1}(s) \big[\cR^{(n)}(s)\big]^{-1} {\rm d}s\nonumber
 \\ &\qquad= - \int_1^{1+1/n} \left( \Id + O\left(\frac{1}{n}\right)\right)\cE(s)(2\pi n)^{\sigma_3/2}
 \begin{pmatrix}
 * & \cK_1
 \\
 * & \cK_2
 \end{pmatrix}
 \begin{pmatrix}
 0 & 0
 \\
 1 & 0
 \end{pmatrix} \begin{pmatrix}
 \cK_2 & -\cK_1
 \\
 * & *
 \end{pmatrix}\nonumber
 \\
 &\phantom{\qquad=}{}\times (2\pi n)^{-\sigma_3/2}\cE^{-1}(s) \left( \Id + O\left(\frac{1}{n}\right)\right) \left(\frac{\cW}{\hF}(s)\right)^2\left(\frac{F^2}{w_+}(s)+\frac{F^2}{w_-}(s)-2\hFw(s)\right) {\rm d}s\nonumber
 \\
 &\qquad= -\int_1^{1+1/n} \left( \cE(1) + O\left( \frac{1}{n} \right)\right)(2\pi n)^{\sigma_3/2}
 \begin{pmatrix}
 \cK_1\cK_2 & -\cK^2_1
 \\
 \cK^2_2 & -\cK_1\cK_2
 \end{pmatrix}\nonumber
 \\
 &\phantom{\qquad=}{}\times (2\pi n)^{-\sigma_3/2} \left( \cE^{-1}(1) + O\left( \frac{1}{n} \right)\right)\left(\frac{\cW}{\hF}(s)\right)^2\left(\frac{F^2}{w_+}(s)+\frac{F^2}{w_-}(s)-2\hFw(s)\right) {\rm d}s\nonumber
 \\
 &\qquad=- 2\pi n \int_1^{1+1/n} \left( \cE(1) + O\left( \frac{1}{n} \right)\right)
 \begin{pmatrix}
 1 & 0
 \\
 0 & \frac{1}{2\pi n}
 \end{pmatrix}
 \begin{pmatrix}
 \cK_1\cK_2 & -\cK^2_1
 \\
 \cK^2_2 & -\cK_1\cK_2
 \end{pmatrix} \label{veryLongEq}\\
 &\phantom{\qquad=}{}\times
 \begin{pmatrix}
 \frac{1}{2\pi n} & 0
 \\
 0 & 1
 \end{pmatrix}\left( \cE^{-1}(1) + O\left( \frac{1}{n} \right)\right)\left(\frac{\cW}{\hF}(s)\right)^2\left(\frac{F^2}{w_+}(s)+\frac{F^2}{w_-}(s)-2\hFw(s)\right) {\rm d}s.\nonumber
\end{align}
Here we used that the matrix \smash{$\left(\begin{smallmatrix}
* & \cK_1
\\
* & \cK_2
\end{smallmatrix}\right)$} has determinant equal to $1$, cf.\ \cite[Remark 7.1]{KMVV}.
Note that all $O\big(\frac{1}{n}\big)$-terms in \eqref{veryLongEq} for $s \in (1,1+\frac{1}{n})$ are bounded by $\frac{c}{n}$, where $c > 0$ is fixed. Hence, we obtain using \eqref{hat F^2/w at +1} and \eqref{Diff F^2/w at +1} in the second last line
\begin{align}
 &-\int_1^{1+1/n} \hmu_n(\bv_n-\hv_n)\hmu_n^{-1} {\rm d}s\nonumber
 \\
 &\qquad=-2\pi n \int_1^{1+1/n} \cE(1) \begin{pmatrix}
 1 & 0
 \\
 0 & 0
 \end{pmatrix}
 \begin{pmatrix}
 \cK_1\cK_2 & -\cK^2_1
 \\
 \cK^2_2 & -\cK_1\cK_2
 \end{pmatrix}
 \begin{pmatrix}
 0 & 0
 \\
 0 & 1
 \end{pmatrix} \cE^{-1}(1)\nonumber
 \\
 &\phantom{\qquad=}{}\times \left(\frac{\cW}{\hF}(s)\right)^2\left(\frac{F^2}{w_+}(s)+\frac{F^2}{w_-}(s)-2\hFw(s)\right) {\rm d}s\nonumber
 \\
 &\phantom{\qquad=\times}{}+ \underbrace{O \left\Vert \begin{pmatrix}
 \cK_1\cK_2 & -\cK^2_1
 \\
 \cK^2_2 & -\cK_1\cK_2
 \end{pmatrix} \left(- \frac{3\pi^2}{\log^2 \frac{2}{x-1}} + O\left(\frac{1}{\log^3(x-1)}\right)\right)\right\Vert_{L^1(1,1+1/n)}}_{=O\big(\frac{1}{n^2\log^2 n}\big), \ \text{by arguments as in \cite[Proposition~C.1]{CD}}}\nonumber
 \\
 &\qquad=- 2\pi n \int_1^{1+1/n} \cE(1)
 \begin{pmatrix}
 0 & -\cK^2_1
 \\
 0 & 0
 \end{pmatrix}
 \cE^{-1}(1)
 \left(\frac{\cW}{\hF}(s)\right)^2\left(\frac{F^2}{w_+}(s)+\frac{F^2}{w_-}(s)-2\hFw(s)\right) {\rm d}s\nonumber
 \\
 &\phantom{\qquad=\times}{}+ O\left(\frac{1}{n^2\log^2 n}\right)\nonumber
 \\
 &\qquad= 2\pi n \cE(1)
 \begin{pmatrix}
 0 & 1
 \\
 0 & 0
 \end{pmatrix}
 \cE^{-1}(1)
 \int_1^{1+1/n} \cK^2_1\left(\frac{\cW}{\hF}(s)\right)^2\left(\frac{F^2}{w_+}(s)+\frac{F^2}{w_-}(s)-2\hFw(s)\right) {\rm d}s\nonumber
 \\
 &\phantom{\qquad=\times}{}+ O\left(\frac{1}{n^2\log^2 n}\right)\nonumber
 \\
 &\qquad= \pi n \begin{pmatrix}
 \I & 1
 \\
 1 & -\I
 \end{pmatrix}
 \underbrace{\int_1^{1+1/n} \cK^2_1 \left(-\frac{3\pi^2}{\log^2 \frac{2}{s-1}} + O\left(\frac{1}{\log^3(s-1)}\right)\right) {\rm d}s }_{= \frac{3}{8 n^2\log^2 n} + O\big(\frac{1}{n^2\log^3 n}\big) \ \text{by \cite[Proposition~C.1]{CD}}}
 + O\left(\frac{1}{n^2\log^2 n}\right)\nonumber
 \\
 &\qquad= \frac{3\pi\I}{8n \log^2n} \begin{pmatrix}
1 & -\I\\
-\I & -1
\end{pmatrix}+ O\left(\frac{1}{n\log^3 n}\right),\label{firstFormula}
\end{align}
which after dividing by $2\pi \I$ is equal to \eqref{Int}.

To show \eqref{differenceInt} we proceed just a before, but use a more precise expansion of $\cR^{(n)}$, see \cite[equation~(8.7)]{KMVV}:
\begin{align*}
 \cR^{(n)}(z) = \Id + \frac{\cR_1(z)}{n} + O\left(\frac{1}{n^2}\right),
\end{align*}
where the $O\big(\frac{1}{n^2}\big)$-term is uniform in $z$.
We use this expansion in \eqref{veryLongEq} to obtain
\begin{align}
 &-\int_1^{1+1/n} \hmu_n(\bv_n-\hv_n)\hmu_n^{-1} {\rm d}s\nonumber
 \\
 &\qquad= -2\pi n \int_1^{1+1/n} \left( \Id + \frac{\cR_1(s)}{n}+ O\left( \frac{1}{n^2} \right)\right)\cE(s)
 \begin{pmatrix}
 1 & 0
 \\
 0 & \frac{1}{2\pi n}
 \end{pmatrix}\nonumber
 \\
 &\phantom{\qquad= }{}\times \begin{pmatrix}
 \cK_1\cK_2 & -\cK^2_1
 \\
 \cK^2_2 & -\cK_1\cK_2
 \end{pmatrix}
 \begin{pmatrix}
 \frac{1}{2\pi n} & 0
 \\
 0 & 1
 \end{pmatrix}\cE^{-1}(s)\left( \Id - \frac{\cR_1(s)}{n}+ O\left( \frac{1}{n^2} \right)\right)\nonumber
 \\
 &\phantom{\qquad= }{}\times \left(\frac{\cW}{\hF}(s)\right)^2\left(\frac{F^2}{w_+}(s)+\frac{F^2}{w_-}(s)-2\hFw(s)\right) {\rm d}s.\label{IntwithR}
\end{align}
A similar expression holds for the second integral in \eqref{differenceInt} but with $n+1$ instead of $n$. Next define $y_1 = n^2f(s)$ and $y_2 = (n+1)^2f(s)$. Note that for $s \in (1,1+1/n)$, we have $0<y_1, y_2 < cn$ for some $c > 0$. Let us denote by $f_1^{-1}$ the local inverse of $f$ around $1$. Then we have with some constant $a$
\begin{align*}
 {\rm d}s &= \frac{1}{n^2}\big(f_1^{-1}\big)'\left(\frac{y_1}{n^2}\right) {\rm d}y_1 = \frac{2{\rm d}y_1}{n^2}\left(1 + \frac{ay_1}{n^2} + O\left(\frac{1}{n^2}\right)\right),
 \\
 {\rm d}s &= \frac{1}{(n+1)^2}\big(f_1^{-1}\big)'\left(\frac{y_2}{(n+1)^2}\right) {\rm d}y_2 = \frac{2{\rm d}y_2}{(n+1)^2}\left(1 + \frac{ay_2}{(n+1)^2} + O\left(\frac{1}{n^2}\right)\right).
\end{align*}
Now performing the substitution $y_1 = n^2f(s)$ in the expression \eqref{IntwithR}, we get
\begin{align}
 &-\int_1^{1+1/n} \hmu_n(\bv_n-\hv_n)\hmu_n^{-1} {\rm d}s\nonumber
 \\
 &\qquad=- \frac{4\pi}{n} \int_0^{n^2f(1+1/n)} \left( \Id + \frac{\cR_1\big(f_1^{-1}\big(\frac{y_1}{n^2}\big)\big)}{n}+ O\left( \frac{1}{n^2} \right)\right)\cE\left(f_1^{-1}\left(\frac{y_1}{n^2}\right)\right)\nonumber
 \\
 &\phantom{\qquad=}{}\times
 \begin{pmatrix}
 1 & 0
 \\
 0 & \frac{1}{2\pi n}
 \end{pmatrix}
 \begin{pmatrix}
 \cK_1(y_1)\cK_2(y_1) & -\cK^2_1(y_1)
 \\
 \cK^2_2(y_1) & -\cK_1(y_1)\cK_2(y_1)
 \end{pmatrix}
 \begin{pmatrix}
 \frac{1}{2\pi n} & 0
 \\
 0 & 1
 \end{pmatrix}\nonumber
 \\
 &\phantom{\qquad=}{}\times
\cE^{-1}\left(f_1^{-1}\left(\frac{y_1}{n^2}\right)\right)\left( \Id - \frac{\cR_1\big(f_1^{-1}\big(\frac{y_1}{n^2}\big)\big)}{n}+ O\left( \frac{1}{n^2} \right)\right)\nonumber
 \\
 &\phantom{\qquad=}{}\times
 \left(\frac{\cW}{\hF}\left(f_1^{-1}\left(\frac{y_1}{n^2}\right)\right)\right)^2\left(\frac{F^2}{w_+}\left(f_1^{-1}\left(\frac{y_1}{n^2}\right)\right)+\frac{F^2}{w_-}\left(f_1^{-1}\left(\frac{y_1}{n^2}\right)\right)-2\hFw\left(f_1^{-1}\left(\frac{y_1}{n^2}\right)\right)\right)\nonumber
 \\
 &\phantom{\qquad=}{}\times \left(1 + \frac{ay_1}{n^2} + O\left(\frac{1}{n^2}\right)\right){\rm d}y_1.\label{LongExp1}
\end{align}
We get a similar expression for the second integral in \eqref{differenceInt} after the change of variables $s = f_1^{-1}\big(\frac{y_2}{(n+1)^2}\big)$:
\begin{align}
&-\int_1^{1+1/(n+1)} \hmu_{n+1}(\bv_{n+1}-\hv_{n+1})\hmu_{n+1}^{-1} {\rm d}s\nonumber
 \\
 &\qquad=- \frac{4\pi}{n+1} \int_0^{(n+1)^2f(1+1/(n+1))} \left( \Id + \frac{\cR_1\big(f_1^{-1}\big(\frac{y_2}{(n+1)^2}\big)\big)}{n+1}+ O\left( \frac{1}{n^2} \right)\right)\nonumber
 \\
 &\phantom{\qquad=}{}\times\cE\left(f_1^{-1}\left(\frac{y_2}{(n+1)^2}\right)\right)
 \begin{pmatrix}
 1 & 0
 \\
 0 & \frac{1}{2\pi (n+1)}
 \end{pmatrix}
 \begin{pmatrix}
 \cK_1(y_2)\cK_2(y_2) & -\cK^2_1(y_2)
 \\
 \cK^2_2(y_2) & -\cK_1(y_2)\cK_2(y_2)
 \end{pmatrix}
\nonumber
 \\
 &\phantom{\qquad=}{}\times \begin{pmatrix}
 \frac{1}{2\pi (n+1)} & 0
 \\
 0 & 1
 \end{pmatrix}
\cE^{-1}\left(f_1^{-1}\left(\frac{y_2}{(n+1)^2}\right)\right)\left( \Id - \frac{\cR_1\big(f_1^{-1}\big(\frac{y_2}{(n+1)^2}\big)\big)}{n+1}+ O\left( \frac{1}{n^2} \right)\right)\nonumber
 \\
 &\phantom{\qquad=}{}\times
 \left(\frac{\cW}{\hF}\left(f_1^{-1}\left(\frac{y_2}{(n+1)^2}\right)\right)\right)^2\bigg(\frac{F^2}{w_+}\left(f_1^{-1}\left(\frac{y_2}{(n+1)^2}\right)\right)+\frac{F^2}{w_-}\left(f_1^{-1}\left(\frac{y_2}{(n+1)^2}\right)\right)
\nonumber \\
 &\phantom{\qquad=\times}{}-2\hFw\left(f_1^{-1}\left(\frac{y_2}{(n+1)^2}\right)\right)\bigg) \left(1 + \frac{ay_2}{(n+1)^2} + O\left(\frac{1}{n^2}\right)\right){\rm d}y_2.\label{LongExp2}
 \end{align}
Now for $0< y < cn$, we have uniformly
\begin{align*}
& \frac{\cR_1\big(f_1^{-1}\big(\frac{y}{n^2}\big)\big)}{n} - \frac{\cR_1\big(f_1^{-1}\big(\frac{y}{(n+1)^2}\big)\big)}{n+1}= O\left(\frac{1}{n^2}\right),
 \\
& \cE\left(f_1^{-1}\left(\frac{y}{n^2}\right)\right) - \cE\left(f_1^{-1}\left(\frac{y}{(n+1)^2}\right)\right)= O\left(\frac{y}{n^3}\right) \lesssim O\left(\frac{1}{n^2}\right),
 \\
& \begin{pmatrix}
 1 & 0
 \\
 0 & \frac{1}{2\pi n}
 \end{pmatrix} - \begin{pmatrix}
 1 & 0
 \\
 0 & \frac{1}{2\pi (n+1)}
 \end{pmatrix}= O\left(\frac{1}{n^2}\right),
 \\
 &\frac{ay}{n^2}-\frac{ay}{(n+1)^2}= O\left( \frac{y}{n^3}\right) \lesssim O\left(\frac{1}{n^2}\right),
 \\
& \frac{\cW}{\hF}\left(f_1^{-1}\left(\frac{y}{n^2}\right)\right) - \frac{\cW}{\hF}\left(f_1^{-1}\left(\frac{y}{(n+1)^2}\right)\right)= O\left(\frac{y^{1/2}}{n^2}\right),
\end{align*}
where we used \cite[Lemma~6.4]{KMVV} in the last estimate. Note that all these error terms can be uniformly bounded by \smash{$O\big(\frac{1+y^{1/2}}{n^2}\big)$}. Additionally, we can choose $r_n = f_1^{-1}\big(\frac{y}{n^2}\big)-1 = O\big( \frac{y}{n^2}\big)$, $\tilde r_n = f_1^{-1}\big(\frac{y}{(n+1)^2}\big)-1 = O\big( \frac{y}{n^2}\big)$ in Proposition~\ref{Proprr} to obtain
\begin{align*}
 &\frac{F^2}{w_+}\left(f_1^{-1}\left(\frac{y}{n^2}\right)\right)+\frac{F^2}{w_-}\left(f_1^{-1}\left(\frac{y}{n^2}\right)\right) - \frac{F^2}{w_+}\left(f_1^{-1}\left(\frac{y}{(n+1)^2}\right)\right)
 \\
 &\qquad{}-\frac{F^2}{w_-}\left(f_1^{-1}\left(\frac{y}{(n+1)^2}\right)\right) = O\left(\frac{y}{n^2}\log \left| \log \left( \frac{y}{n^2} \right)\right| \right)+O\left(\frac{1}{n\log^3 n}\right).
\end{align*}
Note that indeed $n \big|\frac{r_n}{\tilde r_n} - 1\big| < R$ for an appropriate $R > 0$, and $r_n, \tilde r_n \in (0,1)$ for $n$ large enough due to $0 < y < cn$.

From \cite[equation (10.40)]{DLMF}, we have that
\begin{align}\label{KBesselInfty}
 K_0(u) \sim \sqrt{\frac{\pi}{2u}}\E^{-u}, \qquad K_0'(u) \sim -\sqrt{\frac{\pi}{2u}}\E^{-u}
\end{align}
for $u \to \infty$ with $|\arg u | < \frac{3\pi}{2} - \varepsilon$ and $\varepsilon > 0$, implying that the $\cK_j(y)$ decay exponentially for $y \to \infty$. Therefore, changing the limit of integration from $(n+1)^2f(1+1/(n+1))$ to $n^2f(1+1/n)$ in \eqref{LongExp2} will only introduce an exponentially small error which we will neglect. We are now in a position to evaluate \eqref{differenceInt}, by taking the difference of \eqref{LongExp1} and \eqref{LongExp2}:
\begin{align}
 &= - \int_{\Sigr} \hmu_n(\bv_n-\hv_n)\hmu_n^{-1} {\rm d}s + \int_{\Sigr} \hmu_{n+1}(\bv_{n+1}-\hv_{n+1})\hmu^{-1}_{n+1} {\rm d}s\nonumber
 \\
 &=\left(-\frac{4\pi}{n} + \frac{4\pi}{n+1}\right) \int_0^{n^2 f(1+1/n)} \left(\Id + O\left(\frac{1}{n}\right)\right)\cE\left(f_1^{-1}\left(\frac{y}{n^2}\right)\right)\nonumber
 \\
 &\qquad\times
 \begin{pmatrix}
 1 & 0
 \\
 0 & \frac{1}{2\pi n}
 \end{pmatrix}
 \begin{pmatrix}
 \cK_1(y)\cK_2(y) & -\cK^2_1(y)
 \\
 \cK^2_2(y) & -\cK_1(y)\cK_2(y)
 \end{pmatrix}
 \begin{pmatrix}
 \frac{1}{2\pi n} & 0
 \\
 0 & 1
 \end{pmatrix}\nonumber
 \\
 &\qquad\times
\cE^{-1}\left(f_1^{-1}\left(\frac{y}{n^2}\right)\right)\left(\Id + O\left(\frac{1}{n}\right)\right)
 \left(\frac{\cW}{\hF}\left(f_1^{-1}\left(\frac{y}{n^2}\right)\right)\right)^2\nonumber
 \\
 &\qquad\times
 \left(\frac{F^2}{w_+}\left(f_1^{-1}\left(\frac{y}{n^2}\right)\right)+\frac{F^2}{w_-}\left(f_1^{-1}\left(\frac{y}{n^2}\right)\right)-2\hFw\left(f_1^{-1}\left(\frac{y}{n^2}\right)\right)\right)\frac{1}{2} (f_1^{-1})'\left(\frac{y}{n^2}\right){\rm d}y\nonumber
 \\
 &\qquad\phantom{\times}+ \frac{4\pi}{n}O\left(\left\Vert\begin{pmatrix}
 \cK_1(y)\cK_2(y) & -\cK^2_1(y)
 \\
 \cK^2_2(y) & -\cK_1(y)\cK_2(y)
 \end{pmatrix}\left( \frac{1+y^{1/2}}{n^2}\right)\right\Vert_{L^1(0, n^2 f(1+1/n))}\right)\nonumber
 \\
 &\qquad\phantom{\times}+\frac{4\pi}{n} \bigg\Vert \left[O\left(\frac{y}{n^2}\log \left|\log \left( \frac{y}{n^2}\right)\right| \right)+O\left(\frac{1}{n\log^3 n}\right)\right]\nonumber
 \\
 &\qquad\times\begin{pmatrix}
 \cK_1(y)\cK_2(y) & -\cK^2_1(y)
 \\
 \cK^2_2(y) & -\cK_1(y)\cK_2(y)
 \end{pmatrix} \bigg\Vert_{L^1(0, n^2 f(1+1/n))}.\label{L1norms}
\end{align}
Note that the $y^{1/2}$ in the first $L^1$-norm is absorbed by the exponential decay of $\cK_j(y)$, which are in $L^2(\mathbb{R}_+)$ because of \eqref{BesselAsym0} and \eqref{KBesselInfty}, implying that this norm is finite and of order $O\big(\frac{1}{n^2}\big)$. Now observe that for $\E^{-1} \leq y < cn$, we have
\begin{align*}
 \log \left| \log \left( \frac{y}{n^2} \right)\right|= \log\big(-\log y + \log n^2\big) \leq \log\big(1+ \log n^2\big) \lesssim O(\log \log n),
\end{align*}
by the monotonicity of the logarithm, while for $0<y < \E^{-1}$ we have
\begin{align*}
 \log \left| \log \left( \frac{y}{n^2} \right)\right|= \log\big(-\log y + \log n^2\big) &\leq \log(-2\log y ) +\log \big(2\log n^2\big)
 \\
 &\lesssim O(\log |\log y|) + O(\log \log n),
\end{align*}
again by the monotonicity of the logarithm.

Hence, for $0 < y < cn$, we have
\begin{align*}
 O\left(\frac{y}{n^2}\log \left|\log \left( \frac{y}{n^2}\right)\right| \right)&\lesssim O\left( \frac{y}{n^2} \left[\log | \log y |\chi_{(0, \E^{-1})}+ \log \log n\right]\right)
 \\
 &\lesssim O\left(\frac{\log \log n}{n^2} \right) \big(O(y)+1\big).
\end{align*}
We see that the growth of $O(y)$ can be again absorbed into the exponential decay of $\cK_j(y)$, implying that the second $L^1$-norm in \eqref{L1norms} can be bounded by $O\big(\frac{1}{n \log^3 n}\big)$. We summarize:
\begin{align*}
 &= -\int_{\Sigr} \hmu_n(\bv_n-\hv_n)\hmu_n^{-1} {\rm d}s + \int_{\Sigr} \hmu_{n+1}(\bv_{n+1}-\hv_{n+1})\hmu^{-1}_{n+1} {\rm d}s
 \\
 &=-\frac{4\pi}{n(n+1)} \int_0^{n^2 f(1+1/n)} \left(\Id + O\left(\frac{1}{n}\right)\right)\cE\left(f_1^{-1}\left(\frac{y}{n^2}\right)\right)
 \\
 &\quad\times
 \begin{pmatrix}
 1 & 0
 \\
 0 & \frac{1}{2\pi n}
 \end{pmatrix}
 \begin{pmatrix}
 \cK_1(y)\cK_2(y) & -\cK^2_1(y)
 \\
 \cK^2_2(y) & -\cK_1(y)\cK_2(y)
 \end{pmatrix}
 \begin{pmatrix}
 \frac{1}{2\pi n} & 0
 \\
 0 & 1
 \end{pmatrix}
\\
&\quad\times\cE^{-1}\left(f_1^{-1}\left(\frac{y}{n^2}\right)\right)\left(\Id + O\left(\frac{1}{n}\right)\right)
 \left(\frac{\cW}{\hF}\left(f_1^{-1}\left(\frac{y}{n^2}\right)\right)\right)^2
 \\
 &\quad\times
 \left(\frac{F^2}{w_+}\left(f_1^{-1}\left(\frac{y}{n^2}\right)\right)+\frac{F^2}{w_-}\left(f_1^{-1}\left(\frac{y}{n^2}\right)\right)-2\hFw\left(f_1^{-1}\left(\frac{y}{n^2}\right)\right)\right)\frac{1}{2}\big(f_1^{-1}\big)'\left(\frac{y}{n^2}\right){\rm d}y
 \\
 &\phantom{\quad\times}{}+O\left(\frac{1}{n^2\log^3 n}\right).
\end{align*}
 Inverting the change of variables $y = n^2 f(s)$ and Taylor expanding $\cE$, this becomes equal to
\begin{align}
 &-\frac{2\pi n}{n+1} \int_1^{1+1/n} \left( \cE(1) + O\left( \frac{1}{n} \right)\right)
 \begin{pmatrix}
 1 & 0
 \\
 0 & \frac{1}{2\pi n}
 \end{pmatrix}\nonumber
 \\
 &\qquad\times \begin{pmatrix}
 \cK_1\cK_2 & -\cK^2_1
 \\
 \cK^2_2 & -\cK_1\cK_2
 \end{pmatrix}
 \begin{pmatrix}
 \frac{1}{2\pi n} & 0
 \\
 0 & 1
 \end{pmatrix}
 \left( \cE^{-1}(1) + O\left( \frac{1}{n} \right)\right)\nonumber
 \\
 &\qquad\times
 \left(\frac{\cW}{\hF}(s)\right)^2
 \left(\frac{F^2}{w_+}(s)+\frac{F^2}{w_-}(s)-2\hFw(s)\right){\rm d}s
+O\left(\frac{1}{n^2\log^3 n}\right).\label{n+1term}
\end{align}
However, \eqref{n+1term} is precisely $\frac{1}{n+1}$-times the integral in the last line of \eqref{veryLongEq}, which was shown to be equal to $\frac{3\pi\I}{8n \log^2n} \left(\begin{smallmatrix}
\phantom{-}1 & -\I\\
-\I & -1
\end{smallmatrix}\right)+ O\big(\frac{1}{n\log^3 n}\big)$ in equation~\eqref{firstFormula}. Thus, the expression in \eqref{n+1term} is equal to
\begin{align*}
 &\frac{1}{n+1}\left(\frac{3\pi\I}{8n \log^2n} \begin{pmatrix}
\phantom{-}1 & -\I\\
-\I & -1
\end{pmatrix}+ O\left(\frac{1}{n\log^3 n}\right)\right)+ O\left(\frac{1}{n^2\log^3 n}\right)
\\
&\qquad=\frac{3\pi\I}{8n^2 \log^2n} \begin{pmatrix}
\phantom{-}1 & -\I\\
-\I & -1
\end{pmatrix}+ O\left(\frac{1}{n^2\log^3 n}\right).
\end{align*}
Taking into account the $\frac{1}{2\pi \I}$-prefactor, this is seen to be equal to the right hand side of \eqref{differenceInt}, finishing the proof.
\end{proof}

\subsection*{Acknowledgments}
The work of P.D.~was supported in part by a Silver Grant from NYU. The work of M.P.\ was supported by the Methusalem grant METH/21/03 -- long term structural funding of the Flemish Government and the National Science Foundation under Grant No.~1440140. The authors also gratefully acknowledge the support of MSRI (now the Simons Laufer Mathematical Sciences Institute) where the work in this paper was begun when the authors attended a semester program in Fall 2021.

\pdfbookmark[1]{References}{ref}
\LastPageEnding

\end{document}